\def\R{\mathbb R}
\def\I{\mathbb I}
\def\N{\mathbb N}
\def\Z{\mathbb Z}
\def\leb{\text{Leb}}
\def\arr{\overleftarrow}
\def \u {\underline }
\newtheorem{prop}{Proposition}[section]
\newtheorem{thm}[prop] {Theorem}
\newtheorem{conj}[prop] {Conjecture}
\newtheorem{defi}[prop] {Definition}
\newtheorem{lemm}[prop] {Lemma}
\newtheorem{cor}[prop]{Corollary}
\newtheorem{theo}{Theorem}
\newtheorem{Claim}[prop]{Claim}
\newtheorem{fact}[prop]{Fact}
\newtheorem{coro}[theo]{Corollary}
\newtheorem{conjecture}{Conjecture}
\newtheorem{problem}[prop]{Problem}
\theoremstyle{remark}
\newtheorem{exam}[prop]{Example}
\newtheorem{rema}[prop]{Remark}
\begin{document}

\title{Emergence
 and non-typicality of the finiteness of the attractors in many topologies}

\author{Pierre Berger\footnote{CNRS-LAGA, Uiversit\'e Paris 13, USPC.}}

\date{
In memoriam of  Anosov's 80th  anniversary.
}

\maketitle
\abstract{ 
We will introduce the notion of Emergence for a dynamical system, and we will conjecture the local typicality of super complex  ones. 
Then, as part of this program,  we will provide sufficient conditions for an open set of $C^{d}$-families of $C^r$-dynamics to contain a Baire generic set formed by families displaying infinitely many sinks at every parameter, for all $\infty \ge r\ge d\ge 1$ and $d<\infty$ and two different topologies on families. In particular the case $d=r=1$ is new. 
\section{Introduction}
\subsection{Tentatives to describe typical dynamics}
Under the dual leadership of Anosov-Sinai in USSR and Smale in the USA, the hyperbolic theory for differentiable dynamical systems grew up. 
We shall recall some elements of this theory.

Let $M$ be a manifold  and let $f$ be a $C^1$-diffeomorphisms of $M$. 
A compact set $K\subset M$ is hyperbolic if the tangent space of $TM| K$ is split into two vector sub-bundles $E^s$ and $E^u$, which are both $Df$-invariant and respectively contracted and expanded by the dynamics:
\[TM| K = E^s\oplus E^u\quad 
Df(E^s)=  E^s\quad Df(E^u)= E^u\quad \lim_{+\infty} \|Df^n|E^s\|= 0\quad \lim_{+\infty} \|Df^{-n}|E^u\|= 0\]

There are many examples of hyperbolic sets such as the Anosov maps (when $K=M$), the Smale Horseshoes, the Derivated of Anosov, and the Plykin attractors for diffeomorphisms, see \cite{Sm} for more details. 

An important property of the hyperbolic sets is their structural stability:
\begin{thm}[Anosov \cite{An67}]
For every $C^1$-perturbation $f'$ of $f$, there is a unique hyperbolic set $K'$ for $f'$, which is homeomorphic to $K$ via a map $h\colon K\to K'$ $C^0$-close to the canonical inclusion $K\hookrightarrow M$ and which conjugates the dynamics:
\[h\circ f|K= f'\circ h|K\; .\]
\end{thm}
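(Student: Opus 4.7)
The plan is to recast the conjugacy problem as a fixed-point equation in a Banach space and apply the contraction mapping principle, with the hyperbolic splitting providing the contraction. First I would identify continuous maps $h\colon K\to M$ close to the inclusion $i\colon K\hookrightarrow M$ with small continuous sections $\sigma$ of $TM|K$ via $h(x)=\exp_x\sigma(x)$, for a fixed background Riemannian metric. The conjugacy equation $h\circ f=f'\circ h$ then becomes
\[
\sigma(f(x))\;=\;\exp_{f(x)}^{-1}\bigl(f'(\exp_x\sigma(x))\bigr)\qquad\text{for every }x\in K,
\]
whose right-hand side I call $T_{f'}(\sigma)(f(x))$. Conjugacies correspond to fixed points of $T_{f'}$ on a small ball in $C^0(K,TM|K)$, and for $f'=f$ the zero section is already a fixed point.

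To produce a contraction, I would decompose $\sigma=\sigma^s+\sigma^u$ along $TM|K=E^s\oplus E^u$. The linearisation of $T_f$ at $0$ is the operator $\tau\mapsto Df\circ\tau\circ f^{-1}$; it contracts the stable part but expands the unstable one. Rewriting the $E^u$-component of the equation by applying $(Df|E^u)^{-1}$, so that the unstable equation is read ``backwards in time,'' turns the full system into a genuine contraction on $C^0(K,E^s)\times C^0(K,E^u)$ with respect to an adapted norm, uniformly for $f'$ in a $C^1$-neighbourhood of $f$. Banach's fixed-point theorem, applied with parameters, then yields a unique small section $\sigma_{f'}$ varying continuously with $f'$. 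The induced map $h_{f'}(x)=\exp_x\sigma_{f'}(x)$ is continuous, $C^0$-close to $i$, and satisfies the conjugacy by construction; injectivity follows from the expansiveness of hyperbolic sets (two distinct $f$-orbits cannot be shadowed by one and the same $f'$-orbit), so $h_{f'}$ is a homeomorphism onto $K':=h_{f'}(K)$. Invariance of $K'$ under $f'$ is automatic from the conjugacy, and its hyperbolicity follows from a graph-transform argument on the Grassmannian bundle that produces a $Df'$-invariant splitting close to $E^s\oplus E^u$.

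\textbf{Main obstacle.} The crux is controlling the contraction in the middle step. In the ambient sup norm the linearised operator contracts only after iteration, so one has to either iterate $T_{f'}$ a fixed number of times or, more cleanly, pass to an adapted (Mather-type) metric in which $\|Df|E^s\|<1$ and $\|(Df|E^u)^{-1}\|<1$ hold in a single step. Simultaneously, the non-linear remainders coming from $\exp$ and from $f'-f$ must be kept small enough for $T_{f'}$ to preserve a fixed ball with uniform contraction ratio; this uniformity is what yields continuous dependence on $f'$ and is the main technical point of the argument.
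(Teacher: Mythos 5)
The paper offers no proof of this statement: it is recalled as Anosov's classical persistence theorem for hyperbolic sets, cited from the 1967 reference, so there is no in-paper argument to compare yours against; I can only assess your proposal on its own terms, and it is the standard, correct route. Recasting candidate conjugacies as small sections of $TM|K$ via $\exp$, splitting along $E^s\oplus E^u$, reading the unstable component backwards in time (equivalently: the linearisation $\tau\mapsto Df\circ\tau\circ f^{-1}$ is a hyperbolic operator on $C^0(K,TM|K)$, so $\mathrm{Id}-L$ is invertible), and using an adapted metric so the contraction is achieved in one step uniformly over a $C^1$-neighbourhood of $f$ is exactly the classical Anosov--Moser-type argument, and your identification of the main technical point (ball preservation and a uniform contraction ratio despite the nonlinear remainders from $\exp$ and $f'-f$) is accurate. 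Two details you should make explicit to close the argument: injectivity of $h_{f'}$ requires choosing the $C^0$-closeness $\epsilon$ smaller than half the expansivity constant of $f|K$, which is precisely what your shadowing remark uses; and the uniqueness asserted in the statement should be interpreted as uniqueness of the conjugating map $h$ among continuous maps $C^0$-close to the inclusion (hence of the continuation $K'=h(K)$), which is exactly what the uniqueness of your fixed point gives --- $f'$ of course has many other hyperbolic sets far from $K$. Finally, note that since the splitting $E^s\oplus E^u$ is in general only continuous in the base point, the block decomposition of your operator is only continuous in $x$, but this is harmless because the contraction estimate only needs uniform Lipschitz control in the section variable; with that observed, the cone-field (graph-transform) argument for hyperbolicity of $K'$ is standard and completes the proof.
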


The hyperbolic set $K$ is a \emph{basic set} if it is \emph{transitive} (there is a dense forward orbit in $K$) and \emph{locally maximal}: there is a neighborhood $N$ of $K$ such that $K= \cap_{n\in \Z} f^n(N)$. 

Smale defined the diffeomorphisms satisfying \emph{Axiom A} as those whose non-wandering set\footnote{The set of points $z\in M$ such  that any neighborhood $V$ of $z$ intersects one of its iterates: $\exists n\not= 0: f^n (V)\cap V\not= \varnothing$.} is the finite disjoint union of basic sets.

A basic set $K$ is a \emph{hyperbolic attractor} if $K= \cap_{n\ge 0} f^n(N)$.  Another  important result of this theory is:
\begin{thm}[Sinai-Bowen-Ruelle]
Given a hyperbolic attractor, there exists a unique invariant probability measure $\nu$ on $N$ so that for Lebesgue almost every $z\in N$, for every continuous function $\phi$:
\[\lim_{n\to \infty} \frac1n \sum_{i=0}^{n-1} \phi({f^i(z)})= \int \phi d\mu\; .\]
\end{thm}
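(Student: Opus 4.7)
The plan is to construct the measure $\mu$ via symbolic dynamics and the thermodynamic formalism for hyperbolic attractors, then to promote Birkhoff's ergodic theorem from a set of full $\mu$-measure in $K$ to a set of full Lebesgue measure in the trapping region $N$, using the absolute continuity of the stable foliation. (The statement above writes $\nu$ and $\mu$ for the same measure; I shall write $\mu$ throughout.)

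First, I would build a Markov partition for the hyperbolic attractor $K$, following Sinai and Bowen. This produces a finite-to-one semi-conjugacy $\pi\colon \Sigma_A \to K$ between a transitive subshift of finite type and $f|K$, along which invariant probability measures can be transported. The geometric potential $\varphi^u(z) = -\Log\det(Df|E^u(z))$ is H\"older continuous on $K$ (the splitting $E^s\oplus E^u$ being itself H\"older), so it pulls back to a H\"older potential on $\Sigma_A$. Ruelle's Perron-Frobenius theorem then produces a unique equilibrium state $\mu$ for $\varphi^u$; this $\mu$ is ergodic, mixing up to a period, and has exponential decay of correlations for H\"older observables. I would next verify that $\mu$ has absolutely continuous conditional measures on local unstable manifolds --- the defining property of an SRB measure --- which, for uniformly hyperbolic attractors, is equivalent to $\mu$ being the equilibrium state of $\varphi^u$; the densities along a local leaf are given by the convergent infinite products $\prod_{n\geq 0} \det(Df|E^u(f^{-n}z))/\det(Df|E^u(f^{-n}w))$.

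The main obstacle is to extend Birkhoff's conclusion from a full $\mu$-measure subset of $K$ to a full Lebesgue-measure subset of $N$, since $\mu$ is typically singular with respect to Lebesgue on $M$. Two ingredients are needed. First, the absolutely continuous conditionals of $\mu$ on unstable leaves force the set $G$ of Birkhoff-generic points to meet each local unstable manifold $W^u_{\loc}(z)$ in a subset of full leaf-Lebesgue measure. Second, the Anosov-Sinai absolute continuity theorem states that the stable holonomy between two transversal unstable discs is absolutely continuous with respect to their intrinsic Lebesgue measures; this is the most delicate step, since it requires careful estimates on how nearby stable leaves deform under iteration and a control on the Jacobian of the holonomy map. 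Combining these with the fact that $N\subset \bigcup_{p\in K} W^s(p)$ (true because every point of $N$ accumulates on $K$, and the local product structure then places it on a stable leaf of $K$), one deduces that Lebesgue-almost every point of $N$ is Birkhoff generic for $\mu$. Uniqueness of such a physical measure follows from ergodicity of $\mu$ together with the remark that any competing measure satisfying the same conclusion must also have absolutely continuous conditionals on unstable leaves, hence must coincide with the equilibrium state of $\varphi^u$ by the variational principle.
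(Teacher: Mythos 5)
This theorem is quoted in the paper only as classical background (it is the Sinai--Bowen--Ruelle theorem, stated in the introduction without proof), so there is no argument of the paper to compare yours with; I can only assess your sketch on its own terms, and as a sketch it is the standard and essentially correct route. You use exactly the classical ingredients: a Markov partition gives a symbolic model of $f|K$; the geometric potential $-\log|\det(Df|E^u)|$ is H\"older because the hyperbolic splitting is H\"older, and its unique equilibrium state $\mu$ is the SRB measure, characterized by absolutely continuous conditional measures on local unstable leaves with the usual infinite-product densities; then the passage from a full $\mu$-measure set of generic points in $K$ to a full Lebesgue-measure set in $N$ combines the a.c.\ conditionals on unstable leaves, the Anosov--Sinai absolute continuity of the stable holonomy, the fact that $N$ (or at least the basin) is covered by stable manifolds of points of $K$, and the observation --- implicit in your write-up and worth making explicit --- that forward Birkhoff averages of a continuous observable are constant along stable leaves. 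The one soft spot is your uniqueness argument: it is not automatic that a competing measure satisfying the same physical property has absolutely continuous conditionals on unstable leaves (that implication is a nontrivial entropy-formula/Ledrappier--Young type statement), so as phrased this step is shakier than it needs to be. Uniqueness is in fact immediate from the conclusion itself: if $\mu$ and $\nu$ both satisfy it, a single Lebesgue-typical $z\in N$ is generic for both, so $\int\phi\,d\mu=\int\phi\,d\nu$ for every continuous $\phi$, whence $\mu=\nu$. With that simplification your outline is a faithful rendering of the Sinai--Bowen--Ruelle proof.
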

This result was very appreciated by physicists since it enables one to get from a deterministic system a new system with robust statistical properties, and somehow to make a conceptual bridge between Classical Mechanics and Statistical mechanics. 

Also Smale made the following conjecture:
\begin{conj}[Smale 1965]
A Baire generic\footnote{A set is Baire generic if it contains  a countable intersection of open dense sets.} diffeomorphism of a compact manifold satisfies  
Axiom A. 
\end{conj}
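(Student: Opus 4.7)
The plan is to establish Axiom A as a dense open property in $\Diff^1(M)$, so that genericity follows from the Baire category theorem applied to a single open dense set (in fact, as a countable intersection of such sets corresponding to the various conditions making up Axiom A and the no-cycles property). Openness of Axiom A is essentially built into the structural stability theorem of Anosov quoted above together with persistence of the spectral decomposition into finitely many basic sets. The real content is density: given an arbitrary $f \in \Diff^1(M)$, I need to exhibit arbitrarily small $C^1$-perturbations $f'$ of $f$ satisfying Axiom A.

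My first move would be to replace $f$ by a $C^1$-close Kupka--Smale diffeomorphism, so that every periodic point is hyperbolic and all invariant manifolds of periodic points meet transversely; this step is classical. Next, I would apply Pugh's $C^1$ closing lemma to arrange, after a further small perturbation, that the non-wandering set $\Omega(f)$ coincides with $\overline{\mathrm{Per}(f)}$. After these two reductions $\Omega(f)$ is the closure of a countable family of hyperbolic saddles, sinks and sources, and the outstanding question is whether the accumulation points of $\mathrm{Per}(f)$ carry a uniformly hyperbolic structure compatible with the one along the orbits themselves.

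To promote local hyperbolicity on periodic orbits to global hyperbolicity of $\Omega(f)$, I would borrow the strategy pioneered by Ma\~n\'e: extract a dominated splitting on $\overline{\mathrm{Per}(f)}$ by a compactness argument on the Grassmannian applied to the stable/unstable bundles of periodic orbits of increasing period, then upgrade dominated to uniform hyperbolicity using Franks' lemma, which allows one to realize prescribed derivatives along any finite collection of periodic orbits by a $C^1$-small perturbation of $f$. A countable enumeration of periodic orbits and a diagonal argument should in principle push all eigenvalues uniformly off the unit circle. Once $\Omega(f)$ is hyperbolic, Smale's spectral decomposition yields a finite partition into basic sets, and a final perturbation removes any cycles among them, producing Axiom A.

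The main obstacle is clearly the hyperbolicity upgrade. Two dangers stand in the way: first, stable and unstable manifolds of saddles with different unstable indices may meet in tangencies that resist breaking by small perturbations, defeating the dominated splitting; second, Franks' lemma operates orbit-by-orbit, and one must globalize so that all infinitely many periodic orbits are treated by a single perturbation. The working strategy would be to show that, for each finite $k$, the set of $f$ whose periodic orbits of period $\le k$ have eigenvalues uniformly separated from the unit circle and whose invariant manifolds of periods $\le k$ meet transversely is open and dense; the intersection over $k$ is then a Baire generic set on which one hopes to read off uniform hyperbolicity by passage to the limit. If this globalization succeeds, Smale's conjecture follows.
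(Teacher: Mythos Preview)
The statement you are attempting to prove is a \emph{conjecture}, not a theorem, and the paper itself notes immediately afterward that it is false: ``However, Smale and Smale-Abraham (1966) found soon a counter example to this conjecture.'' There is no proof in the paper for you to compare against, because none exists; the conjecture fails on every compact manifold of dimension $\ge 3$ (and, by Newhouse's theorem quoted just below in the paper, in the $C^r$-topology for $r\ge 2$ on surfaces as well).

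Your argument collapses at the ``hyperbolicity upgrade'' step, and you even identify the correct obstruction without recognizing it as fatal. The existence of robustly non-hyperbolic diffeomorphisms --- open sets in $\Diff^1(M)$ none of whose elements satisfy Axiom A --- means that density of Axiom A is simply false, so no amount of Franks' lemma or diagonal arguments can succeed. Concretely: the Abraham--Smale and Shub examples on $\mathbb T^4$, and Ma\~n\'e's derived-from-Anosov examples on $\mathbb T^3$, exhibit $C^1$-open sets of diffeomorphisms with a dominated but non-hyperbolic splitting on the non-wandering set; periodic points of different indices coexist robustly and cannot be separated by perturbation. Your hope that ``for each finite $k$'' one can push eigenvalues uniformly off the unit circle and then pass to the limit fails precisely because the uniformity constants degenerate as $k\to\infty$ inside these open sets. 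The paper's entire subsequent development (Newhouse phenomenon, blenders, parablenders) is devoted to exploiting and strengthening this failure.
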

This conjecture (and others by Smale and by Thom) 	appear at the beginning of a mathematical optimistic movement aiming to describe a typical dynamical system.

However, Smale and Smale-Abraham (1966) fund soon a counter example to this conjecture. 

In 1974,  a student of Smale, Newhouse discovered an extremely complicated new phenomenon, occurring in  a locally Baire generic set of dynamics.
\begin{thm}[\cite{Newhouse}]
 For every $r\ge 2$, for every manifold $M$ of dimension $\ge 2$, there exist a non-empty open set $U\subset Diff^r(M)$ and a generic set $\mathcal R\subset U$ so that for every $f\in \mathcal R$, the dynamics $f$ has infinitely many sinks, each of which having very different statistical properties. 
\end{thm}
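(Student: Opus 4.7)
\medskip

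\noindent\textbf{Plan of proof.} The strategy is to produce an open set where homoclinic tangencies are topologically persistent (a ``Newhouse domain''), then exploit each tangency to create new sinks by perturbation.

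\textbf{Step 1 (Start from a dissipative tangency).} I would pick a diffeomorphism $f_{0}\in\text{Diff}^{r}(M)$ having a saddle periodic point $p$ of period $k$ which is dissipative, in the sense that $|\text{det}\,Df_{0}^{k}(p)|<1$, and whose invariant manifolds $W^{s}(p)$ and $W^{u}(p)$ admit a quadratic tangency at some point $q$. On any surface such $f_{0}$ exists, and the $2$-dimensional case embeds into $\dim M\ge 2$ by taking a product with a strong contraction.

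\textbf{Step 2 (Thick horseshoe).} Unfolding the tangency, one constructs a hyperbolic basic set $\Lambda\ni p$ whose local stable and unstable laminations intersect a common transverse arc in two Cantor sets $K^{s},K^{u}$ of arbitrarily large Newhouse thickness $\tau(K^{s}),\tau(K^{u})$. The $C^{2}$ hypothesis enters here, because bounded distortion of the hyperbolic holonomies is needed to estimate thickness in terms of local contraction rates.

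\textbf{Step 3 (Persistent tangencies).} By the Gap Lemma, two linked Cantor sets $K_{1},K_{2}\subset\mathbb R$ with $\tau(K_{1})\tau(K_{2})>1$ necessarily meet, and meeting is an open condition for $C^{r}$-perturbations of the ambient dynamics. Choosing the horseshoe so that $\tau(K^{s})\tau(K^{u})>1$, I obtain a nonempty open set $U\subset\text{Diff}^{r}(M)$ such that for every $f\in U$ the hyperbolic continuation $\Lambda_{f}$ of $\Lambda$ still has a homoclinic tangency. Moreover $\Lambda_{f}$ remains uniformly dissipative on $U$ up to shrinking.

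\textbf{Step 4 (Creating one extra sink).} For $f\in U$, pick a tangency point $q_{f}$ between $W^{s}(\Lambda_{f})$ and $W^{u}(\Lambda_{f})$. By an arbitrarily small $C^{r}$-perturbation supported in a small neighborhood of $q_{f}$, one unfolds the tangency and, by the standard renormalization near homoclinic tangencies associated with a dissipative saddle, a new sink of some period $n$ is produced, while the thick horseshoe and its persistent tangencies are preserved.

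\textbf{Step 5 (Baire argument).} Let $\mathcal R_{N}\subset U$ denote the set of $f$ having at least $N$ sinks. Sinks are robust under $C^{r}$-perturbation, so $\mathcal R_{N}$ is open. By Step~4, starting from any $f\in U$ one can, after a small perturbation, augment the number of sinks by one while staying in $U$; hence $\mathcal R_{N}$ is also dense in $U$. Then
\[
\mathcal R \;=\; \bigcap_{N\ge 1}\mathcal R_{N}
\]
is Baire generic in $U$, and every $f\in\mathcal R$ has infinitely many sinks.

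\textbf{Step 6 (Differing statistical behavior).} The sinks produced in Step~4 have periods $n\to\infty$ and eigenvalues varying continuously with the perturbation parameter, so one may arrange them to have distinct spectra; in particular their ergodic measures (Dirac combs on different orbits, with different Lyapunov exponents and different basins) are pairwise mutually singular and statistically distinguishable. A diagonal enumeration over a countable family of spectral windows, intersected with the $\mathcal R_{N}$, still gives a Baire generic set.

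\textbf{Main obstacle.} The heart of the argument is Steps~2--3: producing a horseshoe with $\tau(K^{s})\tau(K^{u})>1$ and then verifying that its tangencies survive every $C^{r}$-perturbation. The thickness estimate relies essentially on $C^{1+\alpha}$ bounded distortion, which is why the hypothesis $r\ge 2$ cannot be dropped in this classical argument; Steps~4--5 are comparatively routine applications of perturbation theory and Baire category once the Newhouse domain $U$ is in hand.
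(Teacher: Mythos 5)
Your outline is a faithful reconstruction of Newhouse's classical argument, and in fact the paper does not prove this theorem at all: it is quoted as background from \cite{Newhouse}, so there is no internal proof to compare against. What is worth comparing is your mechanism versus the one this paper develops for its own results. You generate robust homoclinic tangencies via a thick horseshoe: bounded distortion ($C^{1+\alpha}$, hence $r\ge 2$) gives continuity of the stable/unstable thicknesses, the Gap Lemma gives intersection of the two linked Cantor sets whenever the thickness product exceeds $1$, and a Baire scheme (your Steps 4--5, with openness of ``at least $N$ sinks'' and density by unfolding a tangency of a dissipative saddle) yields the generic set. The paper's machinery replaces thickness by a blender: the open set $\mathcal U$ is defined by a blender whose covered domain contains a projectively hyperbolic source, with a robust tangency between $W^s(P)$ and the strong unstable foliation $\mathcal F^{uu}(S)$, so that robust tangencies come from the covering property of unstable manifolds rather than from Cantor-set arithmetic. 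That route buys $C^1$/$r=1$ regularity (no distortion estimates), works for surface local diffeomorphisms and, crucially for the paper, upgrades to parameter families via parablenders, at the cost of needing the blender/source configuration (and dimension $\ge 3$ for the diffeomorphism corollary); your route is the historically correct proof of the quoted statement and stays on a surface, but is intrinsically $C^2$ and parameter-free.

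Two small imprecisions in your write-up, neither fatal to the sketch: in Step 3, ``meeting is an open condition'' is not literally true for a pair of Cantor sets --- the persistence comes from the continuity of thickness in the $C^2$ topology (Newhouse's key technical estimate) together with openness of the linking configuration, after which the Gap Lemma is reapplied to the perturbed sets; and in Step 4 the tangency you unfold must be homoclinic to a \emph{dissipative} periodic point of $\Lambda_f$ (or be pushed to one by the transitivity of $\Lambda_f$), which is why you should fix the dissipativity of the whole basic set, as you indicate, before running the renormalization that produces the sink. The ``very different statistical properties'' clause is informal in the statement and your Step 6 treatment (distinct periodic measures, distinct exponents and basins) is an adequate reading of it.
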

Clearly these dynamics do not satisfy Axiom A (which have finitely many attractors). Even today, we do not know to describe a single example of these dynamics -- in the meaning that -- we do not know even if Lebesgue almost every point belongs to the basin of an \emph{ergodic attractor}. 

From  \cite{PS89}, an \emph{ergodic attractor}  $(\Lambda,\mu)$ is a compact transitive set $\Lambda$ supporting an invariant probability measure $\mu$ s.t. for a set of positive Lebesgue measure $B$ (called \emph{Basin}) it holds:
\[\lim_{n\to \infty} \frac1n \sum_{i=0}^{n-1} \phi({f^i(z)})= \int \phi \, d\mu,\quad \forall \phi\in C^0(M,\R)\quad \forall z\in B.\]

In the mean time the simulations of the atmospheric physicist Lorenz showed a new chaotic attractor for an ODE \cite{Lo63}.  
Later H\' enon modeled the first return of map of this flow, to get a simple paradigmatic example of a chaotic surface map: the H\'enon map $(x,y)\mapsto (x^2+a+y,-b x)$ parametrized by $a,b\in \R$. He conjectured that for $b=0.3$ and a certain parameter $a$ this map has a chaotic attractor \cite{He76}. During a series of papers, this conjecture have been shown to be true for $b$ sufficiently small.
\begin{thm}[Benedicks-Carleson  \cite{BC2}, Mora-Viana \cite{Mora-viana}, Benedicks-Young \cite{BY}, Wang-Young \cite{YW}, Takahasi \cite{T11}, Berger \cite{berhen},Yoccoz (1990-Today)]
For $b$ sufficiently small, for a set of Lebesgue positive measure of parameters $a$, the map $(x,y)\mapsto (x^2+a+y,-bx)$ has unique ergodic attractor $(\Lambda,\mu)$ , which is not supported by an attracting periodic orbit.
\end{thm}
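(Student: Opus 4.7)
The plan is to treat the Hénon map $H_{a,b}(x,y)=(x^2+a+y,-bx)$ for small $b$ as a perturbation of the one-dimensional quadratic family $q_a(x)=x^2+a$, in a singular sense since as $b\to 0$ the map degenerates to a non-invertible interval map. The starting point would be the Benedicks–Carleson analysis of the quadratic family, which produces a positive Lebesgue measure set of parameters $a$ satisfying the Collet–Eckmann condition $|Dq_a^n(q_a(0))|\ge e^{cn}$ together with a slow recurrence bound $|q_a^n(0)|\ge e^{-\alpha n}$ on the critical orbit, obtained by an inductive parameter-exclusion that cuts at each scale the parameters for which the critical orbit returns too close to the critical point.

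The next step, following the Benedicks–Carleson and Mora–Viana framework, is to replace the one-dimensional critical point by a \emph{critical set} $\mathcal{C}$ in the plane, consisting of approximate tangencies between long forward iterates of a suitable unstable curve and the contracting $y$-direction. One sets up the \emph{binding argument}: an orbit that enters a neighborhood of $\mathcal{C}$ is shadowed by a critical orbit for a carefully chosen binding period $p$, after which the unstable tangent vector has recovered exponential growth. The induction on generations $n$ simultaneously (i) propagates the Collet–Eckmann-type estimate along an almost-horizontal tangent direction, (ii) maintains the slow recurrence of critical points to $\mathcal{C}$, and (iii) excludes at each step only an exponentially small proportion of parameters, so that in the limit a positive Lebesgue measure set $\mathcal{A}_b$ of parameters survives.

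Given a parameter $a\in\mathcal{A}_b$, one then constructs the SRB measure $\mu$ by the Pesin–Sinai method: pushing forward normalized Lebesgue measure on an unstable curve through a local unstable manifold of the fixed point and extracting a limit; the binding estimates guarantee bounded distortion along unstable leaves, while uniform hyperbolicity off a neighborhood of $\mathcal{C}$ gives absolute continuity of unstable holonomy. Transitivity of the attractor plus this absolute continuity yield ergodicity and uniqueness of the ergodic attractor supporting $\mu$, and positivity of the Lyapunov exponent on $\mathrm{supp}\,\mu$ rules out an attracting periodic orbit.

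The hardest part, as in the original papers, will be the parameter-exclusion bookkeeping. One needs a transversality statement identifying the derivative of the itinerary map $a\mapsto H_{a,b}^n(z(a))$ along critical curves $z(a)$ with the dynamical derivative $DH^{n-1}$ along the orbit, up to a controlled error of order $b$; this transversality is what allows dynamical estimates to be converted into measure estimates on parameter space, and it must be preserved under the infinitely many generations of the induction. Showing \emph{uniqueness} of the ergodic attractor, rather than merely existence, is the most recent ingredient (due to Berger): it requires proving that every unstable manifold that has not been destroyed by the exclusion accumulates on the same topological attractor, which is delicate precisely because the complement of $\mathcal{C}$ is not uniformly hyperbolic.
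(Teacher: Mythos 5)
There is no proof in the paper to compare against: this statement is quoted as background, with the proof residing in the cited series of papers (Benedicks--Carleson, Mora--Viana, Benedicks--Young, Wang--Young, Takahasi, Berger, Yoccoz), each of which is a long and technically heavy work. What you have written is a roadmap of that literature, and as a roadmap it is essentially accurate: Collet--Eckmann plus slow recurrence for the quadratic family, the planar critical set built inductively, binding periods, exponential parameter exclusion, the transversality between the parameter derivative of the critical itinerary and the dynamical derivative, and the Pesin--Sinai construction of the SRB measure by pushing Lebesgue measure along an unstable leaf. But none of these steps is actually carried out, and each of them is precisely where the difficulty lives (the inductive definition of critical points of all generations, the distortion and exclusion estimates, the preservation of transversality through the induction). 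So as a proof attempt this is a statement of intent rather than an argument; it cannot be assessed as correct or incorrect beyond saying it points at the right references.

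One substantive inaccuracy worth flagging: you assert that ``transitivity of the attractor plus absolute continuity yield ergodicity and uniqueness of the ergodic attractor,'' and that uniqueness is ``due to Berger.'' Uniqueness of the SRB measure for Benedicks--Carleson parameters is due to Benedicks--Young, and the statement that the attractor is the \emph{unique ergodic attractor} in the sense used in this paper (an invariant measure whose basin has positive Lebesgue measure, with Lebesgue-a.e.\ point of the trapping region generic for it) is the basin problem, solved by Benedicks--Viana by a separate and delicate argument; it does not follow formally from transitivity and absolute continuity of the unstable holonomy, because one must control the Lebesgue-typical orbits that spend long times near the critical set, outside any uniformly hyperbolic region. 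Berger's contribution cited here is the extension to H\'enon-like endomorphisms and families, not the uniqueness statement itself. If you intend to flesh this sketch out, that is the step where your outline, as written, would fail.
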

The conjecture of H\' enon was a posteriori disturbing since an arbitrarily small neighborhood $N$ of the attractor is a topological disk, and so the attractor cannot be a hyperbolic attractor (otherwise there would be a line field on the topological disk $N$). Also the simplicity of the model, its physical meaning and the concept of abundance involved make this phenomenon unavoidable.

That is why the next conjectures have been formulated thanks to the concept of typicality sketched by Kolmogorov during his plenary talk 
in the ICM 1954. Here is a version of typicality which appears in many conjectures:
\begin{defi}[Arnold-Kolmogorov typicality] 
A property $\mathcal P$ on dynamics of a manifold $M$ is typical 
if there exists a Baire generic set of $C^d$-families $(f_a)_{a\in \R^k}$ of $C^r$-dynamics so that $\mathcal P$ is satisfied by Lebesgue almost every small parameter $a$. 
\end{defi}
Hence this definition of typicality involved integers $k,d,r$.  We will discuss about the topological spaces of families in the next section. 

To take into account the aforementioned examples and counter examples, there were several conjectures claiming the typicality of the finiteness of attractors, let us recall the following\footnote{Similar conjectures had been formulated by Tedeschini-Lalli \& Yorke, Palis \& Takens, and Palis him-self, some of them for low dimensional dynamical systems.}: 

\begin{conj}[Pugh-Shub \cite{PS95}]\label{ConjPS}
Typically (in the sens of Arnold-Kolmogorov) a diffeomorphism of a compact manifold has a finite numbers of topological attractors (and so sinks). 
\end{conj}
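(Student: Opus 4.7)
Since the Pugh-Shub conjecture is a major open problem, I will sketch what I would attempt rather than claim a complete argument. The plan is to unwind the Arnold-Kolmogorov formulation: find an open set of $C^d$-families $(f_a)_{a\in\mathbb{R}^k}$, inside a Baire residual subset of which Lebesgue-almost every small parameter $a$ gives an $f_a$ with only finitely many topological attractors. I would separate the parameters where $f_a$ is Axiom A (or more generally tame) from a ``bad'' set $B$ consisting of parameters at which the Newhouse mechanism, or a heterodimensional cycle, creates coexisting sinks; the goal is then to show $\operatorname{Leb}(B)=0$ for the generic family.

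The first step would be to appeal to Palis's density picture: every diffeomorphism is either approximated by an Axiom A one (where Smale's theorem gives finiteness directly, and structural stability propagates this to an open set of parameters) or is unfolded through a homoclinic tangency or heterodimensional cycle. Since open Axiom A parameters contribute to the ``good'' set automatically, the issue reduces to what happens on the complement, which lies in the closure of parameters producing homoclinic bifurcations. For generic families I would try to show, using Thom-style transversality in the space of families, that the locus of such bifurcations is a countable union of $C^d$-submanifolds of codimension $\ge 1$, hence Lebesgue-null whenever $d\ge 1$.

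The central obstacle — and the step I expect to carry all of the real difficulty — is controlling the Newhouse domain. Near a tangency between $W^s(K)$ and $W^u(K)$ of a basic set $K$, parameters exhibiting persistent homoclinic tangencies (and thus the cascades of sinks of Newhouse's theorem) form a topologically residual set whose Lebesgue measure is governed by the stable-unstable thickness $\tau^s(K)\cdot\tau^u(K)$. When this product is less than $1$, Palis-Takens gives full-density hyperbolicity at the tangency parameter along a generic one-parameter unfolding. I would try to extend this result in two directions: first, to arbitrary $k$-parameter families, reducing to the one-parameter case by a Fubini argument along a generic foliation of parameter space; second, to uniform density estimates valid across the entire Newhouse domain, not just at a single tangency parameter.

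The measure-theoretic conclusion then would require a parametric Benedicks-Carleson-type induction: at each scale one removes the set of parameters admitting an $n$-th generation sink, controls the measure of the removed set by exponential expansion away from the critical locus, and shows the sum over $n$ is small. The deepest obstruction is precisely that Newhouse's construction produces thick hyperbolic sets where $\tau^s\tau^u\ge 1$, for which no measure-theoretic analog of Palis-Takens is known; moreover, the present paper's main theorems suggest that for certain $d,r$ the desired conclusion actually fails, so any proof must crucially exploit both the size of $k$ and the regularity $r$ to avoid the counter-examples constructed here.
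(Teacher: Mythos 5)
The statement you were asked about is not a theorem of this paper at all: it is the Pugh--Shub conjecture, quoted as Conjecture \ref{ConjPS}, and the paper offers no proof of it. On the contrary, the paper's main theorem is a result in the \emph{opposite} direction: it exhibits open sets $\mathcal U^{d,r}_X$ of $C^{d}$-families of $C^r$-maps (for all $1\le d\le r\le\infty$, $d<\infty$, in both the Arnold and Pugh--Shub topologies) in which a Baire generic family has infinitely many sinks at \emph{every} parameter $a\in\I^k$. In the language of your sketch, this means that for a generic family in these open sets your ``bad'' set $B$ is not Lebesgue-null but all of $\I^k$; the parablender mechanism ($H_3$, the covering of the $C^d$-jet of the source by jets of unstable manifolds) is precisely designed to defeat the transversality/Thom-type argument you propose, because the tangency configuration persists for an open set of families and for every parameter, rather than unfolding along a codimension-$\ge 1$ locus. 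So the second step of your plan (``the locus of homoclinic bifurcations is a countable union of positive-codimension submanifolds, hence null'') fails in exactly the setting this paper constructs.

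Beyond that structural clash, each remaining ingredient of your outline is itself an open problem rather than an available lemma: the Palis density alternative is not known in $C^r$, $r\ge 2$; the Palis--Takens full-density result applies only when $\tau^s\tau^u<1$ and, as you note yourself, Newhouse domains are built from thick horseshoes where $\tau^s\tau^u\ge 1$ and no measure-theoretic analogue exists; and a ``parametric Benedicks--Carleson induction'' removing sink-creating parameters has no known formulation that survives the parablender, since here the sinks are created robustly in the family topology, not along a small exceptional parameter set. Your closing caveat is the honest assessment: any proof of the conjecture would have to exploit the regularity/parameter-count restrictions so as to avoid the counterexamples of this paper (and of \cite{BE15,BE152}), and the proposal as written does not identify such a mechanism. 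In short, there is nothing to compare with a proof in the paper, and the proposal is a program resting on open (and, in the paper's setting, false) intermediate steps, not a proof.
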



These conjectures aimed to model typical dynamics thanks to finitely many attractors. The general strategy though to prove them was to study the unfolding of stable and unstable manifolds (in analogy with Thom-Mather works in singularity Theory).

%

Recently, in \cite{BE15}, a mechanism has been found to stop the unfolding for an open set of dynamics' families. This mechanism is given by the \emph{parablender}, a generalisation of Bonatti-Diaz Blender for parameter families. This enabled to prove:  

\begin{thm}[\cite{BE15,BE152}]
For every manifold of dimension at least $3$, for every $k\ge 0$, for every $r> d\ge 1$, there exists an open set of $\hat{\mathcal U}$ of $C^d$-families $(f_a)_{a}$ of $C^r$-diffeomorphisms of $M$, so that for a generic $(f_a)_a\in \mathcal U$,  \emph{for every} parameter $a\in [-1,1]^k$, the map $f_a$ has infinitely many sinks.

\end{thm}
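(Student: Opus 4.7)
The plan is to combine the \emph{parablender} mechanism of \cite{BE15} with Newhouse's classical unfolding of tangencies, carried out parameter-uniformly, so that the heteroclinic tangency and the resulting sinks persist at \emph{every} parameter $a\in[-1,1]^k$ rather than merely on a Lebesgue-positive subset.

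First I would fix an initial $C^r$-family $(f_a)_a$ on $M$ containing a hyperbolic basic set $K_a$ equipped with a $C^d$-parablender structure, together with a periodic saddle $P(a)$ whose unstable manifold $W^u(P(a_0))$ is tangent to $W^s(K_{a_0})$ for some $a_0$. The parablender is built so that the Cantor set of local unstable leaves of $K_a$, viewed as a function of the parameter up to order $d$, has a $d$-jet covering property. The \textbf{Fundamental Property of the parablender} then asserts that for every family $(g_a)_a$ in a $C^d$-neighborhood $\hat{\mathcal U}$ of $(f_a)_a$, the tangency between $W^u(P(a))$ and $W^s(K_a)$ can be realised for \emph{every} $a\in[-1,1]^k$.

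Next I would run the Newhouse chain of perturbations, but parameter-uniformly. Given $(g_a)_a\in \hat{\mathcal U}$, an arbitrarily small $C^r$-perturbation supported away from the parablender creates, at every parameter, a new periodic saddle $Q_n(a)$ with a homoclinic tangency; a further small perturbation converts this into a hyperbolic sink of period $n$ at every $a$. Since the tangency point depends in a $C^d$-controlled way on $a$ via the parablender property, both perturbations can be packaged as $C^d$-families over the compact parameter cube. Iterating this construction in disjoint regions of $M$ and at increasing periods yields, for arbitrary $N$, a family in $\hat{\mathcal U}$ displaying at least $N$ sinks at every $a$. Setting
\[
\mathcal U_N := \bigl\{(g_a)_a\in\hat{\mathcal U}\,:\,g_a\text{ has at least $N$ sinks for every }a\in[-1,1]^k\bigr\},
\]
each $\mathcal U_N$ is open in the $C^d$-topology (being a hyperbolic attracting periodic orbit is a finite-time open condition and $[-1,1]^k$ is compact) and dense in $\hat{\mathcal U}$ by the previous step. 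The residual intersection $\bigcap_N \mathcal U_N$ is then the required generic set.

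The main obstacle, and technical heart of the argument, is the Fundamental Property of the parablender itself: one must exhibit a hyperbolic set whose parametric family of local unstable manifolds has the $d$-jet covering property robustly in the $C^d$-topology on families. This is where the hypothesis $r>d$ enters decisively, since the intrinsic $C^r$-contraction of the blender must dominate the $d$-th order parameter dependence in order to implement a $d$-jet analogue of Newhouse thickness. The bound $\dim M \ge 3$ is forced by the need for a two-dimensional blender support together with a transverse parameter direction along which the saddle $P(a)$ and the leaves of $K_a$ can be moved independently.
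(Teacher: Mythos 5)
There is a genuine gap, and it sits exactly where you compress the argument into one sentence: you assert that the Fundamental Property of the parablender lets the tangency ``be realised for \emph{every} $a\in[-1,1]^k$'' for every nearby family, and later that the two Newhouse perturbations ``can be packaged as $C^d$-families over the compact parameter cube.'' That is not what the parablender gives. Its covering property is a statement about $C^d$-\emph{jets at a single parameter} $a_0$: for a nearby family one finds a family of points $(Q_a)_a$ in some unstable leaf with $d(\gamma(a),Q_a)=o(\|a-a_0\|^d)$. After the perturbation that turns this jet coincidence into an actual coincidence (and then into a persistent homoclinic tangency and a sink), the conclusion holds only for $a$ in a neighborhood $V$ of $a_0$ whose size is bounded below \emph{only in terms of the size of the perturbation you allow}. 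So for a fixed admissible perturbation size you get density of families with a sink of large period for $a\in V$, not for all $a\in\I^k$; and since the sets ``sink of period $\ge N$ for $a$ near $a_0$'' have parameter-neighborhoods of non-uniform size, the Baire intersection of your $\mathcal U_N$ cannot be shown dense by this local argument alone. The paper states explicitly that this is the main difficulty of the parametric statement.

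What is missing is a mechanism to make the construction simultaneous over a finite cover of the parameter cube: one must replicate the relevant homoclinic/tangency structures into finitely many copies with \emph{disjoint orbits} (in this paper, $(k+1)3^k$ projectively hyperbolic periodic sources per grid point of a grid $\Z_{\eta'}\subset\I^k$, of which $k+1$ are selected per grid point so that all orbits involved stay $\delta$-separated), so that for each parameter ball one can perform the tangency-creation and sink-creation perturbations with supports in pairwise disjoint regions of phase space, and then glue them into a single small perturbation covering all of $\I^k$; a Thom-transversality argument is also needed to discard the codimension-one parameter sets where the tangency with the strong unstable foliation fails to be quadratic, which is why several independent copies per grid point are required. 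Your phrase ``iterating this construction in disjoint regions of $M$'' addresses getting $N$ distinct sinks at a fixed parameter, not this parameter-uniformity problem, which is the technical heart of both \cite{BE15} and the present paper (here solved by Proposition \ref{propfonda3} and Section \ref{propfonda3main}). Without this replication-and-covering step your proposal proves only the pointwise-in-$a_0$ statement, not ``for every parameter.''
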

The same statement is also possible for surface local diffeomorphisms\footnote{
A local diffeomorphism is a differentiable map of a manifold so that  at every point its derivative is a linear bijection.}.

The \emph{main result} of this paper is devoted to get the case $r\ge d\ge 1$, $r\le \infty$ and $d<\infty$. Hence the cases $d=r$ or $r=1$ are new. It will be stated in section \ref{Statement of the main theorem}.

It will be proved by using a variation of the previous proof: we will put a source in the covered domain of the parablender. 
This enables a  revision of the previous proof, which is shorter and covers the new case $d=r\ge 1$. 

Also the statement of the main result lies on general hypothesis satisfied by an open set of families. This is motivated by a work in progress with S. Crovisier and E. Pujals, showing the Kolmogorov-Arnold $C^r$-typicality of dynamics displaying infinitely many sinks.

Such results are very disturbing since the general trend was to use the bifurcation theory to show the finiteness of attractors. Here the bifurcation theory enables to stop the bifurcation and shows the non-typicality of the finiteness of attractors.

\subsection{Emergence}

One of my personal motivations is the following problem:
 \begin{problem}\label{Problemstat}
 Show the existence of an open set of deterministic dynamical systems which typically cannot be described by means of statistics.
 \end{problem}
 
This problem goes in opposition to the aforementioned optimistic movement, as well as the massive (and naive) use of statistic in many branches of science (economy, ecology, physics ...). 

The aim is not to prove that statistics never apply (they do for many systems!), but that they do not apply for many typical systems, even among the finite dimensional, deterministic differentiable dynamical systems.  
We shall formalize this problem. For this end, we are going to define the Emergence of dynamical systems. This concept evaluates the complexity to approximate a system by statistics. 

In statistic it is standard to use the Wasserstein distance $W_1$ on the space of probability measures $\mathbb P(M)$ of a compact manifold $M$:
\[W_1(\nu,\mu) = \sup_{\phi\in Lip^1(M,[-1,1])}\int_M \phi(x)\, d(\mu-\nu)(x)\;,\quad \forall \nu,\mu\in \mathbb P(M)\]
where $Lip^1(M,[-1,1])$ is the space of $1$-Lipschitz functions with values in $[-1,1]$. 

Given a differentiable map $f$ of $M$, $x\in M$ and $n\ge 0$, we denote by $\delta_{n\, x}$  the probability measure which associates to an observable $\phi\in C^0(M,\R)$ the mean $\frac1n \sum_{k=0}^{n-1}\phi({f^k(x)})$.  

\begin{prop}
Given a probability  measure $\mu$, the following functions are continuous:
\[x\in M\mapsto d_{W^1}(\delta_{n\, x},\mu)\in \R\; ,\quad \forall n\]
\end{prop}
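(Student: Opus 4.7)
The plan is to show that the map $x\mapsto \delta_{n,x}\in \mathbb P(M)$ is itself continuous with respect to $W_1$, and then invoke the fact that $W_1$, being a distance, is $1$-Lipschitz in each of its arguments.

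First I would exploit the reverse triangle inequality for the distance $W_1$:
\[
\bigl|W_1(\delta_{n,x},\mu)-W_1(\delta_{n,x'},\mu)\bigr|\le W_1(\delta_{n,x},\delta_{n,x'}),
\]
so it suffices to control the right-hand side as $x'\to x$. From the supremum definition of $W_1$ and the linearity of integration against $\phi$, one obtains the convexity/sub-additivity bound
\[
W_1(\delta_{n,x},\delta_{n,x'})\le \frac{1}{n}\sum_{k=0}^{n-1}W_1\bigl(\delta_{f^k(x)},\delta_{f^k(x')}\bigr),
\]
because any $\phi\in \mathrm{Lip}^1(M,[-1,1])$ that is tested on $\delta_{n,x}-\delta_{n,x'}$ can be split term by term and each term is bounded by the corresponding $W_1$ between Dirac masses.

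Next I would reduce the Dirac–to–Dirac estimate to the metric on $M$. For any two points $y,y'\in M$ and any $\phi\in\mathrm{Lip}^1(M,[-1,1])$, one has $|\phi(y)-\phi(y')|\le d(y,y')$, hence
\[
W_1(\delta_y,\delta_{y'})\le d(y,y').
\]
Combining with the previous inequality gives
\[
\bigl|W_1(\delta_{n,x},\mu)-W_1(\delta_{n,x'},\mu)\bigr|\le \frac{1}{n}\sum_{k=0}^{n-1} d\bigl(f^k(x),f^k(x')\bigr).
\]

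Finally, since $f$ is differentiable, hence continuous, each iterate $f^k$ is continuous on the compact manifold $M$, so the right-hand side tends to $0$ as $x'\to x$. This yields the desired continuity for every fixed $n$, with no real obstacle; the only subtle point is the convexity of $W_1$ under averaging, which is immediate from the supremum definition of $W_1$ given in the paper.
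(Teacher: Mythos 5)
Your proof is correct, and it takes a genuinely different route from the paper's. You prove outright that $x\mapsto \delta_{n\,x}$ is continuous into $(\mathbb P(M),W_1)$, via the reverse triangle inequality, the sub-additivity of $W_1$ under the average $\frac1n\sum_k\delta_{f^k(x)}$, and the elementary bound $W_1(\delta_y,\delta_{y'})\le d(y,y')$ coming from the $1$-Lipschitz test functions; continuity of each iterate $f^k$ then finishes the argument. The paper instead invokes compactness of $\mathrm{Lip}^1(M,[-1,1])$ (Arzel\`a--Ascoli) to extract a test function $\phi$ attaining the supremum defining $d_{W^1}(\delta_{n\,x},\mu)$, and uses the Lipschitz character of $\phi$ and of the iterates $f^k$ to get the one-sided estimate $d_{W^1}(\delta_{n\,x'},\mu)\ge d_{W^1}(\delta_{n\,x},\mu)-\delta$, which by exchanging the roles of $x$ and $x'$ gives continuity. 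Your argument buys something: it needs no compactness of the space of test functions, no extremizer, and in fact only continuity (not Lipschitzness) of $f$; moreover it produces an explicit modulus of continuity,
\[
\bigl|d_{W^1}(\delta_{n\,x},\mu)-d_{W^1}(\delta_{n\,x'},\mu)\bigr|\le \frac1n\sum_{k=0}^{n-1}d\bigl(f^k(x),f^k(x')\bigr),
\]
showing the map is even Lipschitz for fixed $n$ when $f$ is $C^1$ on the compact manifold. The paper's extremizer technique, on the other hand, is the same device it reuses in the subsequent F-Emergence example, which explains its choice there; as a proof of this proposition alone, yours is the more economical.
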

\begin{proof}
We notice that it suffices to show that for every $\delta>0$, there exists $\eta>0$ such that if $x$ and $x'$ are $\eta$ distant, then 
\[d_{W^1}(\delta_{n\, x'},\mu)\ge d_{W^1}(\delta_{n\, x},\mu)-\delta\; .\]
We recall that $Lip^1(M,[-1,1])$ endowed with $C^0$-uniform norm is compact, by Arzel\`a-Ascoli Theorem. Hence, there exists $\phi \in L^1(M,[-1,1])$ such that: $$d_{W^1}(
\delta_{n\, x},\mu)= \frac1n \sum_{k=0}^{n-1}\phi({f^k(x)})- \int_M \phi d\mu\; .$$ As $\phi$ and  $(f^k)_{k\le n}$ are Lipschitz, there exists $\eta>0$ so that for $x'$ $\eta$-close to $x$, it holds:
 \[ \frac1n \sum_{k=0}^{n-1}\phi({f^k(x')})\ge
 \frac1n \sum_{k=0}^{n-1}\phi({f^k(x)})-\delta\Rightarrow d_{W^1}(\delta_{n\, x'},\mu)\ge d_{W^1}(\delta_{n\, x},\mu)-\delta\; .\]
\end{proof}

We recall that the space of probabilities over a compact manifold and endowed with the metric $d_{W^1}$ is relatively compact.

Hence, given a differentiable map of $f$ of $M$, we can define the \emph{Emergence $\mathcal E(f,\epsilon)$ of $f$ at scale $\epsilon>0$} as the minimum numbers $N$ of probability measures $\{\mu_i\}_{1\le i\le N}$  so that 
\[\limsup_{n\to \infty} \int_{x\in M} \min_{1\le i\le N} d_{W^1}(\frac1n \sum_{k=0}^{n-1}\delta_{f^k(x)},\mu_i) \, d\leb\le \epsilon\; .\]

\begin{defi}[Emergence]
The Emergence is \emph{\bf F} if $\mathcal E(f,\epsilon) = O(1)$ when $\epsilon\to 0$.

The Emergence is at most \emph{\bf P}  if there exists $k> 1$ so that $\mathcal E(f,\epsilon) = O(\epsilon ^{-k})$.

The Emergence is \emph{\bf Sup-P} if $\limsup \frac{\log \mathcal E(f,\epsilon)}{-\log\epsilon}=+\infty$.
\end{defi}
We notice that the Emergence is a lower bound on 
the complexity (in space\footnote{The number of data to store.} and in time) to approximate numerically a  dynamical system by statistics with  precision  $\epsilon$. Following, the celebrated Cobham's thesis, an algorithm in Sup-P is -- in practical -- not feasible \cite{Co65}. 

Note that the Emergence is invariant by differentiable conjugacy. 

\medskip

\noindent{\bf Examples with {\bf F}-Emergence}
If a dynamical system $f$ admits finitely many  ergodic attractors $(\Lambda_i,\mu_i)_{1\le i\le N}$ whose basins $(B_i)_i$ cover Lebesgue almost all the manifold, then the Emergence is bounded by $N$ (and so it is of type { F})
\begin{proof} By the dominated function theorem, it suffices to show that for every $i\le N$ and every $x\in B_i$,  $d_{W^1}(
\delta_{n\, x},\mu_i)\to 0$. By compactness of $Lip^1(M,[-1,1])$, for every $n$, there exists $\phi_n \in Lip^1(M,[-1,1])$ so that: 
\[\Delta_{ n}:= d_{W^1}(\delta_{n\, x},\mu_i)= 
\int_M \phi_n d \delta_{n\, x}- \int_{M} \phi_n d\mu_i
= \frac1{n} \sum_{k=0}^{n-1}\phi(f^k(x))- \int_{M} \phi_n d\mu_i
 \; .\]
Let $\phi\in Lip^1(M,[-1,1])$ be a cluster value of $(\phi_n)_n$ and let $(n_j)_{j\ge 0}$   be an increasing sequence so that $\phi_{n_j}\to \phi$. Then 
\[\Delta_{ n_j}\le  2\|\phi_{n_j}-\phi\|_{C^0}+
 \frac1{n_j} \sum_{k=0}^{n_j-1}\phi(f^k(x))- \int_M 
\phi d\mu_i\to 0\; .\]
Thus every cluster value of $(\Delta_{ n})_n$ is zero, and so this sequence converges to zero.
\end{proof}
\begin{rema}
We recall that a diffeomorphism satisfying Axiom A,   an irrational rotation or a H\'enon map for Benedicks-Carleson parameters have finitely many ergodic attractors whose basin cover Lebesgue almost all the phase space $M$. Hence their Emergences are finite.
\end{rema}

\noindent{\bf Example with { P}-Emergence.}
Let $f$ be the identity. Observe that $\mathcal E(f,\epsilon) = O(\epsilon^{-n})$ with $n$ the dimension of $M$. Hence its Emergence is polynomial. Also the Emergence of an irrational rotation on a cylinder, which is the product of systems with Emergences 1 and $O(\epsilon^{-1})$, is $O(\epsilon^{-1})$.

It seems also possible to prove that the Emergence of the so-called Bowen eyes dynamics is $O(\epsilon^{-1})$. 

\medskip

Hence it seems that all the well understood dynamical systems have an Emergence at most {P}.
However, the main conjecture of this work states that those of Sup-P Emergence should not be neglected:
\begin{conjecture}\label{mainconj}
There exists an open set $U\subset Diff(M)$ so that a typical $f\in U$ has Emergence { Sup-P}.
\end{conjecture}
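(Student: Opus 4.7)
The plan is to lift the parablender technology of \cite{BE15,BE152} from a single scale to a cascade of scales. Inside an open set $U\subset\mathrm{Diff}(M)$ on which the Newhouse/parablender mechanism produces a robust Cantor set of bifurcations, I would first upgrade the mechanism to a quantitative form: for each $n$, there is an open dense subset $\mathcal R_n\subset U$ of diffeomorphisms $f$ admitting at least $N_n$ distinct hyperbolic periodic sinks $\{p_i\}_{i\le N_n}$ whose invariant Dirac measures $\mu_i$ are pairwise $W_1$-separated by at least $\epsilon_n$ and whose basins $B_i$ satisfy $\sum_i \mathrm{Leb}(B_i)\ge m_0>0$ uniformly in $n$. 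Choosing the schedule so that $\log N_n/(-\log\epsilon_n)\to\infty$ and intersecting, one obtains a Baire generic $\mathcal R=\bigcap_n\mathcal R_n\subset U$ on which $\mathcal E(f,\epsilon_n/2)\ge N_n$, hence Sup-P Emergence.

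To realize the schedule, I would use a local renormalization: near every sink $p_i$ of generation $n$ I embed, after a rescaling of factor $\lambda_i\ll 1$, a new parablender configuration inside the basin $B_i$. This produces in generation $n+1$ a further family of sinks $p_{i,j}$ whose associated Dirac measures $\mu_{i,j}$ are distinct both from one another and from the previous $\mu_{i'}$ (distinctness at the statistical level comes from separating the supports in $M$). Because parablender configurations persist $C^d$-open and $C^r$-locally generically, the new sinks can be created by an arbitrarily small perturbation without destroying any of the previously fixed ones; this gives the openness and density needed for each $\mathcal R_n$. Iterating yields a Cantor-like hierarchy whose $n$-th generation provides the required count $N_n$ at scale $\epsilon_n$.

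The translation from ``number of sinks'' to Emergence proceeds through the definition: if $N$ clusters $\{\mu_i\}$ are $2\epsilon$-separated in $W_1$ and for each $i$ a Lebesgue-positive set of points $x$ has $\delta_{n\,x}\to\mu_i$, then any list of $<N$ measures must leave one cluster at distance $\ge\epsilon$, so the integrated minimum in the definition of $\mathcal E(f,\epsilon)$ is bounded below by $\epsilon\cdot\min_i\mathrm{Leb}(B_i)$, forcing $\mathcal E(f,c\epsilon)\ge N$ for a suitable constant $c$. Applied to the cascade one gets $\mathcal E(f,c\epsilon_n)\ge N_n$, and super-polynomial growth of $N_n$ in $\epsilon_n^{-1}$ delivers the conclusion.

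The hard part is controlling Lebesgue measure down the cascade. At each renormalization step the new sinks live in a ball of radius $O(\lambda_i)$, and although their basins can be made to occupy a definite proportion of that ball, it is delicate to arrange that the \emph{total} Lebesgue mass of basins of high-generation sinks remains bounded below independently of $n$: each renormalization risks a loss factor, and one must argue that enough of the phase space is captured by some sink of some generation. This is the measure-theoretic persistence problem underlying the conjecture, and it is presumably why the statement is only formulated as Conjecture \ref{mainconj}; overcoming it would likely require a distortion estimate along the cascade together with a Borel–Cantelli style argument guaranteeing that Lebesgue-a.e.\ orbit eventually falls into the basin of some sink produced by the construction.
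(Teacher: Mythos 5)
The statement you are proving is not a theorem of the paper: it is Conjecture \ref{mainconj}, explicitly left open. The paper proves only two much weaker ingredients of this program: the qualitative Claim that a dynamics with infinitely many sinks cannot have finite Emergence (via the $\sqrt\epsilon$--separation/$\sqrt\epsilon$--basin pigeonhole you also use), and the Main Theorem producing, for Baire generic families in $\mathcal U^{d,r}_X$, infinitely many sinks at \emph{every} parameter; the author says the Sup-P question itself ``will be discussed in a forthcoming work.'' So there is no proof in the paper to match your proposal against, and your proposal does not close the conjecture either.

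The gap is the one you concede, and it is not a technicality but the whole difficulty. To force $\mathcal E(f,\epsilon_n)\ge N_n$ you need, at scale $\epsilon_n$, at least $N_n$ invariant measures pairwise $W_1$-separated by some $s_n$ whose basins have Lebesgue measure at least $m_n$ with $s_n\, m_n\gtrsim \epsilon_n$, and you need $\log N_n/(-\log\epsilon_n)\to\infty$. The Newhouse/parablender step creates each new sink by a perturbation localized in an arbitrarily small neighborhood of a homoclinic tangency; it gives \emph{no} lower bound on the Lebesgue measure of the resulting basin, and no relation between that measure, the number of sinks created, and their mutual $W_1$-separation. Your renormalization cascade is a reasonable way to try to manufacture such a relation, but the claimed uniform (or even slowly decaying) basin-mass control down the cascade is unproved, and the paper itself stresses that even for a single Newhouse diffeomorphism one does not know whether Lebesgue-a.e.\ point lies in the basin of an ergodic attractor --- so no distortion or Borel--Cantelli estimate of the kind you invoke is currently available. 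A second, smaller mismatch: the conjecture's ``typical'' is the Arnold--Kolmogorov parametric notion used throughout the paper (Baire generic $C^d$-families, Lebesgue-a.e.\ parameter), whereas your intersection $\bigcap_n\mathcal R_n$ of open dense subsets of $\mathrm{Diff}(M)$ only yields Baire genericity of maps; to address typicality you would have to make every quantitative step persist at (almost) every parameter, i.e.\ run the cascade through the parablender machinery in family space, which your sketch mentions but does not carry out. As written, the proposal is a program consistent with the paper's stated program, not a proof.
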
 
Let us explain why a proof of this conjecture would solve Problem \ref{Problemstat} from the computational view point. Given a typical $f\in U$, to describe by means of statistics with precision $\epsilon$, all of its orbit, but a proportion Lebesgue measure $1-\epsilon$, we would need at least a super-polynomial number of invariant probabilities w.r.t. $\frac1\epsilon$. To find them by means of statistics, we need \emph{at least} one data for each of them, and so to do a super polynomial of number of operations. By Cobham's thesis this is not feasible by a computer.   

Also we notice that when the Emergence is Sup-P, the Hausdorff dimension of the set of probabilities which would model our system is infinite.

Hence to find these invariant probabilities, we would not be able to use the (finite dimensional) parametric statistics, but only the non-parametric ones, whose computational cost is higher (and much more than 1 as in the above lower bound).

Furthermore let us notice that even if we quotient the phase space by a symmetry group of finite dimension (as for the case of a rotation on the disk or the identity on a manifold), the Emergence of the system will remain Sup-P.

\medskip

Note that it is not even easy to find a locally typical non-conservative system with infinite Emergence (that is not in $F$), on the other hand KAM theory provides examples (of at least $P$-Emergence) in the conservative setting.

\medskip

\noindent{\bf Candidates for Sup-P-Emergence.} It is perhaps  possible to construct a unimodal map with Sup-P Emergence from \cite{HK90}, or a locally $C^r$-dense set of surface diffeomorphisms with Sup-P Emergence from \cite{Ki15}.  It would be very challenging to derivate from these systems one which is moreover locally typical. 

\begin{Claim} Dynamics with infinitely many sinks do not have finite Emergence.\end{Claim}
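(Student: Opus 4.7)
The strategy is contradiction. Suppose $f$ possesses infinitely many sinks and yet has $\mathbf{F}$-Emergence, so that there exists $N\in\N$ with $\mathcal E(f,\epsilon)\leq N$ for every sufficiently small $\epsilon>0$; for each such $\epsilon$, fix probabilities $\nu_1^\epsilon,\dots,\nu_N^\epsilon$ realising the defining inequality of $\mathcal E(f,\epsilon)$. To each sink $p_i$ of period $q_i$ I associate its orbit measure $\mu_i:=\tfrac1{q_i}\sum_{k=0}^{q_i-1}\delta_{f^k(p_i)}$ and its basin $B_i$, which is open and of positive Lebesgue measure $\lambda_i>0$. For every $x\in B_i$ the orbit $(f^k(x))_k$ shadows the orbit of $p_i$, so the empirical measure $\delta_{n,x}$ converges weakly, and hence in $d_{W^1}$ by compactness of $M$, to $\mu_i$. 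Distinct sinks have disjoint orbits, so the $\mu_i$ are pairwise distinct points of $(\P(M),d_{W^1})$.

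I next extract the combinatorial obstruction. Choose $N+1$ distinct sink measures $\mu_{i_1},\dots,\mu_{i_{N+1}}$ and set
\[2\eta:=\min_{j\neq l}d_{W^1}(\mu_{i_j},\mu_{i_l})>0,\qquad \lambda:=\min_{j}\lambda_{i_j}>0.\]
Given any probabilities $\nu_1,\dots,\nu_N$, pick for each $j$ an index $\sigma(j)$ realising $\min_k d_{W^1}(\mu_{i_j},\nu_k)$. Since $\sigma:\{1,\dots,N+1\}\to\{1,\dots,N\}$ cannot be injective, there exist $j\neq l$ and $k$ with $\sigma(j)=\sigma(l)=k$; the triangle inequality then gives $d_{W^1}(\mu_{i_j},\nu_k)+d_{W^1}(\mu_{i_l},\nu_k)\geq 2\eta$, so $\min_{k'} d_{W^1}(\mu_{i_{j_0}},\nu_{k'})\geq\eta$ for some $j_0\in\{j,l\}$. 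Consequently, for every choice of $(\nu_k)_k$,
\[\sum_{j=1}^{N+1}\lambda_{i_j}\min_{k} d_{W^1}(\mu_{i_j},\nu_k)\;\geq\;\lambda\eta.\]

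I then combine this with Fatou. The non-negative integrands $g_n(x):=\min_k d_{W^1}(\delta_{n,x},\nu_k^\epsilon)$ are uniformly bounded and, on each $B_{i_j}$, converge pointwise to $\min_k d_{W^1}(\mu_{i_j},\nu_k^\epsilon)$. Hence
\[\sum_{j=1}^{N+1}\lambda_{i_j}\min_{k} d_{W^1}(\mu_{i_j},\nu_k^\epsilon) = \int_{\bigcup_j B_{i_j}}\lim_{n\to\infty} g_n\,d\leb \;\leq\; \int_M\liminf_{n\to\infty} g_n\,d\leb \;\leq\; \liminf_{n\to\infty}\int_M g_n\,d\leb \;\leq\; \epsilon.\]
Combined with the previous display this forces $\lambda\eta\leq\epsilon$, contradicting the choice of any $\epsilon<\lambda\eta$.

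The only non-routine ingredient I foresee is the convergence $\delta_{n,x}\to\mu_i$ in $d_{W^1}$ for $x$ in a sink basin, which reduces to the asymptotic shadowing of $(f^k(x))_k$ by the periodic orbit of $p_i$ together with the fact that $d_{W^1}$ metrises weak convergence on the compact space $M$. Everything else—pigeonhole geometry in $(\P(M),d_{W^1})$ and a single application of Fatou on bounded non-negative integrands—is routine.
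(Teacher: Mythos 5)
Your argument is correct and is essentially the paper's proof: choose $N+1$ sink orbit measures, use pigeonhole plus the triangle inequality to find one that stays at definite Wasserstein distance from any $N$ candidate measures, and bound the defining integral from below using the positive Lebesgue measure of the corresponding basin, where empirical measures converge to the orbit measure. Your fixed thresholds $\eta,\lambda$ and the explicit Fatou step are just a cleaner write-up of the paper's sketch, which runs the same scheme with $\sqrt{\epsilon}$ playing both roles.
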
 
\begin{proof} Indeed, for every $N\ge 1$, there exists $N$ different attracting cycles which define measures $\{\nu_i\}_{1\le i\le N+1}$. For $\epsilon>0$ small enough, the measure of each basin of $\nu_i$ is at least $\ge \sqrt \epsilon$. For a possibly smaller $\epsilon>0$, given any $N$-uplet of probability measure $\{\mu_i\}_{1\le i\le N}$, there exists exists $1\le j\le N+1$ so that $W_1(\nu_j, \{\mu_i\}_{1\le i\le N})\ge \sqrt \epsilon$. Consequently the Emergence $\mathcal E(\epsilon)$ at scale $\epsilon$ is greater than $N$. Hence the limit of $\mathcal E(\epsilon)$ as $\epsilon\to 0 $ is infinite.  \end{proof}

It is perhaps possible to make a variation of Newhouse's construction to produce a generic dynamics with { Sup-P} Emergence. This issue will be discussed in a forthcoming work.   
That is why main Theorem \ref{main} enters in this program.

Let me mention also the concept of universal dynamics of Bonatti-Diaz \cite{BD02} and Turaev \cite{Tu15} which might produce locally Baire generic sets of diffeomorphism with high Emergence.

\medskip 
It would be interesting to study Conjecture \ref{mainconj} w.r.t. different notions of typicality \cite{HK10} and smoothness.  Also it might be interesting to investigate the concept of Emergence for other metrics than $W_1$ on the space of invariant probability measures. 

\medskip 

Also it would be interesting to provide numerical evidences for such a program (from big data?). The following problem remains open.
\begin{problem}
 Show numerical simulations depicting a (typical) dynamical systems which displays infinitely many sinks. 
\end{problem}
Let us point out that by definition, a Sup-P Emergent  dynamical system is very complex to describe, and so the non-existence of such pictures is  consistent with their conjectured local typicality.
 }

\thanks{I am grateful to  V. Baladi, J. Bochi, C. Bonatti, F. Ledrappier, M. Lyubich, M. Shub, C. Tresser, D. Turaev and especially to  S. Crovisier and E. Pujals for many inspiring and motivating discussions. 
I thanks the anonymous referee for all his valuable suggestions and corrections.
\medskip 
\begin{center}
\emph{With all my thought to my master Jean-Christophe Yoccoz.}
\end{center}
}

\section{Statement of the main Theorem}

\subsection{Topological spaces of families of maps}
\paragraph{Space of families}
For $d\ge r\ge 0$ and $k\ge 0$, there are at least two ways to define a space of $C^d$-families parametrized by $\I^k:=[-1,1]^k$ of $C^r$-maps from a manifold $M$ into a manifold $N$. 

The first is attributed to Arnold by Y. Iliachenko (in the case $d=r$) \cite{IL99}. It is the space 
\[C^{d,r}_A(\I^k , M,N):= \{(f_a)_a: \partial_a^i\partial_z^j f_a (z)\text{ exists continuously } \forall i\le d\text{ , }i+j\le r \text{ and } (a,z)\in \I^k\times M\}\]
It has the advantage to be invariant by composition: for every  $(f_a)_a, (g_a)_a\in  C^{d,r}_A(\I^k , M,M)$, the composed family $(f_a\circ g_a )_a$ is in $C^{d,r}_A(\I^k , M,M)$.

Another way was presented in \cite{PS95} to  state Conjecture \ref{ConjPS}.  It is the space:
\[C^{d,r}_{PS}(\I^k , M,N):= \{(f_a)_a: \partial_a^i\partial_z^j f_a (z)\text{ exists continuously } \forall j\le r\text{ , }i\le d \text{ and } (a,z)\in \I^k\times M\}\]
It has the inconvenient \emph{to not be} invariant by  composition when $d>0$ and $r<\infty$. But it has the advantage to have a geometric meaning. A family is $(f_a)_a\in  C^{d,r}_{PS}(\I^k , M,M)$    is actually  a $C^d$-map from $\I^k $ into the Fr\'echet manifold $C^r(M,N)$. 

We remark:
\[C^{d,d+r}_{A}(\I^k, M,N) \subset C^{d,r}_{PS}(\I^k, M,N) \subset C^{d,r}_{A}(\I^k, M,N) \]

Hence in the important case $r=\infty$ (or $d=0$) the spaces $C^{d,r}_{PS}(\I^k, M,N)$ and $C^{d,r}_{A}(\I^k, M,N)$ are equal, and so they are denoted by $C^{d,\infty }(\I^k, M,N)$ (resp. $C^{0,r }(\I^k, M,N)$).

\paragraph{Topologies on families}
Any Riemanian metrics on $M$ and $N$, together with the Eulidean norm on $\R^k$ define a Riemannian metric on $N$, $TM^*\otimes TN$, ... ,
 $(\R^{*k})^{\otimes  i} \otimes (TM^*)^{\otimes  j} \otimes TN$. The topology of $C_A^{d,r}(\I^k ,M,N)$  is defined thanks to the following base of neighborhoods:
 \[V(f,K,\epsilon, r')= \{f'\in C_A^{d,r}(\I^k ,M,N) : 
d(\partial_a^i\partial_z^j f_a (z) ,\partial_a^i\partial_z^j f'_a (z) ) <\epsilon, \; \forall (a,z)\in K,\; i+j\le r', i \le d\}\]
among any finite  $r'\le r$, $\epsilon>0$ and any compact subset $K$  of $\I^k\times M$. The topology on $C_{PS}^{d,r}(\I^k ,M,N)$ is defined similarly ($i+j\le r'$ is replaced by $j\le r'$).  Both topologies coincide for $r=\infty$ and $d=0$.  

We remark that for $d=r$ the space  $C^{d,\infty }_A(\I^k, M,N)$ is canonically homeomorphic to the space $C^{d}(\I^k\times M,N)$ endowed with the $C^d$-compact-open topology. Also for $d=r=\infty$ the space  $C^{\infty,\infty }(\I^k, M,N)$ is canonically homeomorphic to the space $C^{\infty}(\I^k\times M,N)$ endowed with the compact-open, weak Whitney topology. A family in $C^{\infty,\infty }(\I^k, M,N)$ is called \emph{smooth}.

\subsection{Hyperbolic sets involved}

Most of the proofs involve surface local diffeomorphisms. We recall 
that a map $f\in C^r(M,M)$ is a local diffeomorhism if $r\ge 1$ and there is an open  covering $(U_i)_i$ of $M$ so that $f|U_i$ is a diffeomorphism onto its image for every $i$. 

Let us recall some elements of the hyperbolic theory for local diffeomorphisms. 

An invariant compact set $K$ for $f$ is \emph{hyperbolic} if there 
is a vector bundle $E^s \subset TM|K$ which is invariant by $Df|K$, contracted by $Df$ and so that the quotient $TM|K/E^s$ is expanded by the action induced by $Df$.

Then for every $z\in K$,  the following set, called the \emph{stable manifold} of $z$, is a  $\dim\, E^s$-manifold, injectively $C^r$-immersed  into $M$:
\[W^s(z; f):= \{z'\in M:  \; \lim_{+\infty}  d(f^n(z), f^n(z'))=0\}\]  

The notion of unstable manifold needs to consider the \emph{space of preorbits} $\arr K:=\{(z_i)_{i\le 0}\in K^{\Z^-}: z_{i+1}= f(z_i)\; \forall i<0\}$ of $K$. Given a preorbit $\arr z = (z_i)_{i\le -1}\in \arr K$, we can define the \emph{unstable manifold} of $\arr z$, which is a  $\text{codim}\, E^s$-manifold $C^r$-immersed  into $M$:
\[W^u(\arr z; f):= \{z'\in M:  \; \lim_{+\infty}  d(f^n(z), f^n(z'))=0\}\]  
In general this manifold is \emph{not} immersed \emph{injectively}.

When $z\in K$ is periodic,  the unstable manifold $W^u(z;f)$ denotes the one associated to the unique preorbit of $z$ which is periodic. 

A local stable manifold  $W^s_{loc} (z; f)$ of $z$ is an embedded, connected submanifold equal to a neighborhood of $z$ in $W^s(z; f)$. The local unstable manifold are defined similarly. We can chose them so that they depend continuously on $z$ and $\arr z$ respectively.

We endow $\arr K$ with the topology induced by the product topology of $K^\Z$. Hence  $\arr K$ is compact. Note that when $f|K$ is bijective, $\arr K$ is homeomorphic to $K$.

\paragraph{Blender} A hyperbolic set $K$ of a surface local diffeomorphisms $f$ is \emph{blender} if $\dim E^u\not=2$ and a continuous union of local unstable manifolds $\cup_{\arr  z\in \arr K} W^u_{loc} (\arr z; f)$ contains robustly a non empty open set $O$ of $M$:
\[\cup_{\arr z\in \arr K} W^u_{loc} (\arr z; f')\supset O \quad ,\quad \forall f' \; \text{$C^1$-close to } f\; .\]
The set $O$ is called a \emph{covered domain} of the blender $K$. 

\begin{figure}[h]
    \centering
       \includegraphics[width=7cm]{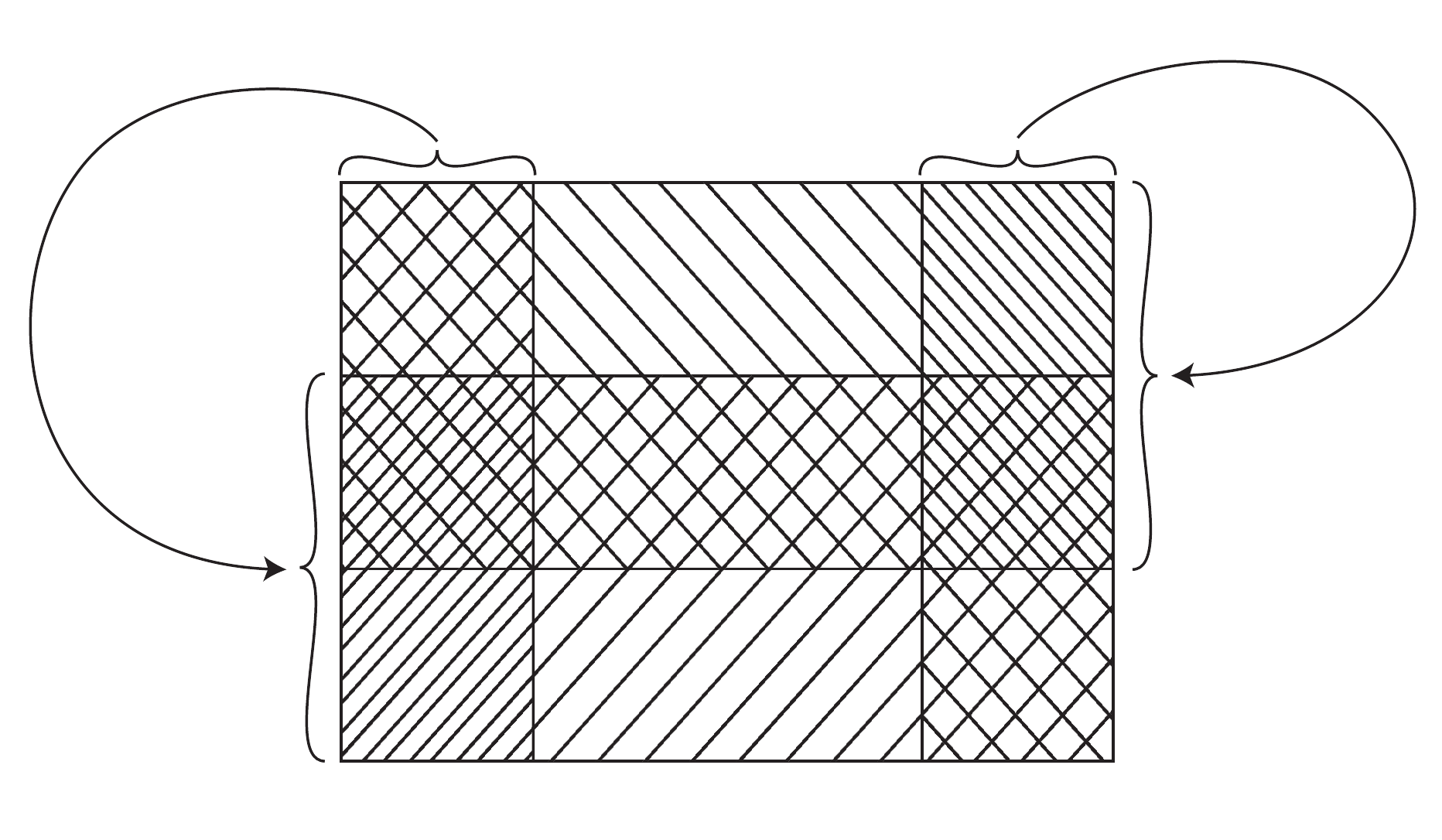}
    \caption{A blender of a surface.}
\end{figure}

Blender were discovered in \cite{BD96}, and then used in \cite{BD99,DNP} to produce a locally generic set of diffeomorphisms displaying infinitely many sinks. 
In \cite{BE15}, the notion of blender has been adapted to local surface diffeomorphisms to produce a locally generic set of surface local diffeomorphism displaying infinitely many sinks following a similar argument to \cite{DNP}.

\paragraph{Area contracting saddle point} A surface local-diffeomorphism has an area contracting fixed point $P$ if the product of the stable and the unstable eigenvalues of $P$ has a modulus less than $1$.

\paragraph{Projectively hyperbolic source}  
A fixed point $S$ of a surface diffeomorphism $f$ is a \emph{projectively hyperbolic source} if $D_Sf$ has two  eigenvalues 
$\sigma_{uu}, \sigma_{u}$ with different moduli $1< |\sigma_{u}|<|\sigma_{uu}|$. 
The eigenspace associated to $\sigma_u$ is called the \emph{weak unstable direction}, whereas the eigenspace $E^{uu}(S)$ associated to $\sigma_{uu}$ is called the \emph{strong unstable direction}. 

A \emph{basin} of the source $S$ is an open neighborhood  $B$  of $S$ on which an inverse branch $g$ of $f$ is well defined and whose points $z\in B$ satisfy $g^n(z)\to S$.  Then $E^{uu}(S)$ extends continuously to a line field on $B$, denoted also by $E^{uu}(S)$ and so that for every $z\in B$, 
\[ \lim_{n\to +\infty}\frac1n \log (\|D_zg^n|E^{uu}(S)\|\cdot  |\sigma^{uu}|^n) \to 0\; .\]
The line field  is uniquely defined once $g$ is fixed, and there is a unique inverse branch of $g\colon B\to B$ which fixes $S$. Hence  $E^{uu}(S)$ is uniquely defined once $S$ and $B$ are fixed.  

If $S$ is a source of period $p$, then the above definitions and notations are canonically generalized by considering $f^p$ instead of $f$.  

Moreover it is well known that the line field $E^{uu}(S)$ is the tangent space of a unique $C^0$-foliation $\mathcal F^{uu}(S)$ on $B$, whose leaves are as regular as the dynamics \cite{Yoccozintro}. Note that $\mathcal F^{uu}(S)$ is uniquely defined once $S$ and $B$ are fixed.

A $C^1$-embedded curve $\Gamma$  in $B$ has a \emph{robust tangency} with  $\mathcal F^{uu}$ if any $C^1$-perturbation of $\Gamma$ has a tangency with one leaf of $\mathcal F^{uu}$. 
\paragraph{Hyperbolic sets for  families of dynamics}
Let us fix $k\ge 0$, $1\le d\le r\le \infty$, $X\in \{A,PS\}$ and a family of local diffeomorphisms $(f_a)_a\in C^{d,r}_X(\I^k,M,M)$, with $\I=[-1,1]$. 

It is well known that if $f_0$ has a hyperbolic fixed point $P_0$, then it persists for every $a$ small as a hyperbolic fixed point $P_a$, and the map $a\mapsto P_a$ is of class $C^d$.

 More generally, if  $K$ is a hyperbolic set for $f_0$, it persists for every $a$ small, but if the map $f_0|K$ is not bijective, we need to consider the space of preorbits $\arr K=\{(z_i)_{i\le 0}: f_0(z_i)=z_{i+1} \forall i<0\}$ of $K$. Let $\arr f_0$ be the shift map on $\arr K$. 
 \begin{thm}[Prop 1.6 \cite{BE15, BE152}]
For every $a$ in a neighborhood $V$ of $0$, there exists a map $h_a \in C^0(\arr K; M)$ so that:
\begin{itemize}
\item $h_0$ is the zero-coordinate projection $ (z_i)_i\mapsto z_0$.
\item $f_a \circ h_a= h_a\circ \arr {f_0}$ for every $a\in V$. 
\item For every $\arr  z\in \arr K$, the map $a\in V\mapsto h_a(\arr z)$ is of class $C^d$.
\end{itemize}
\end{thm}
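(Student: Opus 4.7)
The plan is to realise $h_a$ as the unique solution, in a suitable Banach space of continuous sections, of a parameter-dependent functional equation, and then to conclude the $C^d$-dependence in $a$ through the parametric implicit function theorem.

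\textbf{Setup.} Since $\arr K$ is the inverse limit of $(K,f_0)$, the shift $\arr{f_0}$ is a homeomorphism on $\arr K$, and the zero-coordinate projection $\pi\colon\arr K\to K$, $\arr z=(z_i)_{i\le 0}\mapsto z_0$, satisfies $f_0\circ\pi=\pi\circ\arr{f_0}$; thus $h_0:=\pi$ is the required map at $a=0$. Fix a Riemannian metric on $M$ of injectivity radius $\iota>0$: any continuous $h\colon\arr K\to M$ within $\iota$ of $\pi$ in sup distance writes uniquely as $h(\arr z)=\exp_{\pi(\arr z)}(v(\arr z))$ for a section $v\in E:=C^0(\arr K,\pi^*TM)$ of the pull-back bundle, which is a Banach space under the sup norm. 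The conjugacy $f_a\circ h=h\circ\arr{f_0}$ translates, after substituting $\arr w=\arr{f_0}^{-1}(\arr z)$, into a nonlinear equation $\Phi(a,v)=0$ with $\Phi(0,0)=0$; the hypothesis $(f_a)_a\in C^{d,r}_X$ with $r\ge 1$ ensures that $\Phi$ is $C^d$ in $a$ and $C^1$ in $v$ with the joint regularity required by the implicit function theorem.

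\textbf{Lifting the hyperbolic structure to $\arr K$.} On $K$ itself, the stable bundle $E^s$ is well-defined as a subbundle of $TM|K$, but the unstable direction exists only as a quotient, since $f_0$ need not be injective on $K$. Because $\arr{f_0}$ is a homeomorphism on $\arr K$, however, a genuine $Df_0$-invariant complement $E^u\subset\pi^*TM$ over $\arr K$ is produced by the standard cone-limit $E^u(\arr z):=\lim_{n\to\infty}Df_0^n(z_{-n})\,C(z_{-n})$, where $C$ is any forward-invariant unstable cone field on $K$ transverse to $E^s$; hyperbolicity provides uniform exponential convergence and yields a continuous splitting $\pi^*TM=\pi^*E^s\oplus E^u$ on which $Df_0$ is uniformly contracting on the first summand and uniformly expanding on the second along backward orbits in $\arr K$.

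\textbf{Inverting the linearisation and finishing.} The Fr\'echet derivative $L:=D_v\Phi(0,0)\colon E\to E$ has the form $L\eta(\arr z)=Df_0(z_{-1})\,\eta(\arr{f_0}^{-1}(\arr z))-\eta(\arr z)$. Decompose $E$ according to the above splitting: the weighted pull-back operator $\mathcal L\eta:=Df_0\cdot\eta\circ\arr{f_0}^{-1}$ preserves both summands, has spectral radius strictly less than $1$ on the stable factor (by contraction of $Df_0|E^s$), and on the unstable factor is invertible with spectral radius strictly greater than $1$ and a contracting inverse (by expansion of $Df_0|E^u$). Consequently $1\notin\sigma(\mathcal L)$ and $L=\mathcal L-I$ is a Banach-space isomorphism. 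The parametric implicit function theorem then yields a unique $C^d$-map $a\mapsto v_a\in E$ on a neighborhood $V$ of $0$ solving $\Phi(a,v_a)=0$, and setting $h_a(\arr z):=\exp_{\pi(\arr z)}(v_a(\arr z))$ furnishes the three required properties; the pointwise $C^d$-dependence of $h_a(\arr z)$ on $a$ follows a fortiori from the $E$-valued $C^d$-regularity of $v_a$. I expect the main obstacle to be the lifting procedure of the previous paragraph: promoting the merely-quotient unstable direction on $K$ to an honest subbundle over $\arr K$ is precisely what exploits the invertibility of $\arr{f_0}$ and makes the clean spectral argument on $L$ available.
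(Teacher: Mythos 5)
Your proposal is correct, and in fact there is nothing in this paper to compare it against: the statement is quoted as Proposition 1.6 of the cited earlier works and is not reproved here. Your argument is the classical Anosov--Mather structural-stability scheme, transported to the inverse limit: write candidate conjugacies as sections $v\in C^0(\arr K,\pi^*TM)$ via the exponential map, observe that the linearization $L=\mathcal L-I$ with $\mathcal L\eta=Df_0\cdot\eta\circ\arr{f_0}^{-1}$ is invertible because $\mathcal L$ preserves the splitting $\pi^*E^s\oplus E^u$ over $\arr K$ and has spectrum off the unit circle on each factor, then apply the parametric implicit function theorem; this is also the scheme behind the cited proposition, and your cone-limit construction of $E^u$ over $\arr K$ (possibly after an adapted metric or an iterate to get strict cone invariance) is the standard way to promote the quotient unstable direction of an endomorphism to an honest bundle over the space of preorbits. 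The only point I would make explicit is the regularity input for the parametric IFT: to conclude that $a\mapsto v_a$ is $C^d$ into $C^0(\arr K,\pi^*TM)$ you want $\Phi$ jointly $C^d$ on $\R^k\times E$, i.e.\ continuity of $\partial_a^i\partial_z^j f_a$ for all $i+j\le d$, which holds in both classes $C^{d,r}_A$ and $C^{d,r}_{PS}$ precisely because $r\ge d$; your phrasing ``$C^d$ in $a$ and $C^1$ in $v$'' should be upgraded to this joint statement, but since the hypotheses supply it, no gap results, and the local uniqueness in the IFT is what guarantees $h_0=\pi$ and pointwise $C^d$-dependence follows by composing with the (bounded linear) evaluation at each $\arr z$.
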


The point $h_a(\arr z)$ is called the \emph{hyperbolic continuation} of $\arr z$ for $f_a$. We denote $z_a\in M$ the zero-coordinate of $h_a(\arr z)$. 
 The family of sets $(K_a)_a$, with $K_a:= \{z_a : \arr z \in \arr K\}$, is called the hyperbolic continuation of $K$.

The local stable and unstable manifolds $W^s_{loc} (z; f_a) $ and $W^u_{loc} (\arr z; f_a) $  are canonically chosen so that they depend continuously on $a$, $z$ and $\arr z$. 
 They are called the \emph{hyperbolic continuations} of $W^s_{loc} (z; f_0) $ and $W^u_{loc} (\arr z; f_0) $ for $f_a$. 
Let us recall:
\begin{prop}[Prop 1.6 \cite{BE15,BE152}]\label{Wupara}
For every $z\in  K$, the family $(W^s_{loc} ( z; f_a))_{a\in V}$ is of class $C^{d,r}_A$.
For every $\arr z\in \arr K$, the family $(W^u_{loc} (\arr z; f_a))_{a\in V}$ is of class $C^{d,r}_A$.
Both vary continuously with $z\in K$ and $\arr z\in \arr K$.
 \end{prop}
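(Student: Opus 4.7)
The plan is to realize the local unstable (resp.~stable) manifolds as the unique fixed point of a parameter-dependent graph transform acting on a Banach space of $C^{d,r}_A$-families of sections, and to exploit the closedness of the class $C^{d,r}_A$ under composition to propagate regularity through the iteration. I would first pass to the skew product $F\colon \I^k \times M \to \I^k\times M$, $(a,z)\mapsto(a,f_a(z))$, for which $\I^k\times K$ (with the second factor lifted to preorbits) becomes a partially hyperbolic invariant set, $E^s$ and $E^u$ being the strong bundles and the parameter direction the center bundle.

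Using continuous trivializations of $E^s$ and $E^u$ along $K$, write each candidate unstable manifold at $\arr z\in\arr K$ as the graph of $\psi^{\arr z}_a\colon B_\epsilon(0)\subset E^u(z_0)\to E^s(z_0)$ and consider the space $\mathcal B$ of continuous families $(\psi^{\arr z}_a)_{\arr z, a}$ for which all mixed partial derivatives $\partial_a^i \partial_\xi^j \psi^{\arr z}_a(\xi)$ with $i\le d$ and $i+j\le r$ exist, are jointly continuous in $(\arr z, a,\xi)$, and are uniformly bounded; equipped with the supremum of those derivatives, $\mathcal B$ is complete. Define the graph transform $\mathcal T$ at $(\arr z,a)$ by pushing the graph of $\psi^{\arr z_-}_a$ forward by $f_a$, where $\arr z_-$ denotes the shifted preorbit, and reparametrizing as a graph over $E^u(z_0)$ via the implicit function theorem. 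After switching to a Lyapunov metric, hyperbolicity makes $\mathcal T$ a uniform contraction; the fibered contraction principle of Hirsch--Pugh--Shub upgrades this to a contraction on each order of derivative separately, so that Banach's fixed point theorem yields a unique $\psi^\star\in\mathcal B$. Uniqueness forces $\psi^{\star,\arr z}_a$ to parametrize $W^u_{\loc}(\arr z; f_a)$.

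The main obstacle I anticipate is verifying that $\mathcal T$ indeed sends $\mathcal B$ into itself at the required regularity. Expanding $\partial_a^i \partial_\xi^j \mathcal T\psi$ via Fa\`a di Bruno writes it as a sum of products of factors $\partial_a^{i'}\partial_z^{j'}f_a$ and $\partial_a^{i''}\partial_\xi^{j''}\psi^{\arr z_-}_a$ with $i'+i''\le i$ and $j'+j''\le j$; the constraints $i'+j'\le r$, $i'\le d$ required for these to be defined are automatic from $i+j\le r$, $i\le d$, which is precisely why the class $C^{d,r}_A$ (and not $C^{d,r}_{PS}$) is the natural setting here. Continuity in $z$ and $\arr z$ then follows from the fact that the shift on $\arr K$ is continuous in the product topology, so the fixed point of a continuously varying family of uniform contractions varies continuously. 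The stable-manifold statement is entirely analogous, working with local inverse branches of $f_a$ furnished by the local-diffeomorphism covering $(U_i)_i$ and indexing by points of $K$ rather than preorbits.
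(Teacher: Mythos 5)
The paper gives no proof of this proposition — it is quoted as Prop.~1.6 of \cite{BE15,BE152} — and your argument (skew product $(a,z)\mapsto(a,f_a(z))$, graph transform on families of graphs over $E^u$, Hirsch--Pugh--Shub fibre contraction, with the key observation that the anisotropic class $C^{d,r}_A$ is stable under the chain rule because $i'\le i\le d$ and $i'+m\le i+j\le r$ in the Fa\`a di Bruno expansion, the neutral parameter direction imposing no further bunching restriction) is exactly the standard route taken in those references. The only imprecision is that the fibre-contraction theorem does not literally make $\mathcal T$ a contraction in the full $C^{d,r}$ norm; it shows that the successive derivatives of the iterates converge, so the unique Lipschitz-graph fixed point is of class $C^{d,r}_A$ and varies continuously with $z$ and $\arr z$ — which is how the argument is normally run and does not affect correctness.
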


In order to study the bifurcation between stable, unstable and strong unstable manifolds for a family $(f_a)_a$ of dynamics $f_a$, it is natural to study the action of $(f_a)_a$ on \emph{$C^d$-jets}. 
\label{notationJdM}
Given a $C^d$-family of points $(z_a)_{a\in \R^k }$, its \emph{$C^d$-jet at $a_0\in \R^k$} is  $J^d_{a_0} (z_a)_a= \sum_{j=0}^d \frac{\partial^j_a z_a}{j!}  a^{\otimes j}$.  Let $J^d_{a_0}(\R^k,M)$ be the space of $C^d$-jets of $C^d$-families of points in $M$ at $a=a_0$. 

We notice that any $C^{d,r}_A$-family $(f_a)_a$ of $C^r$-maps $f_a$ of $M$ acts canonically on $J^d_{a_0} M$ as the map:
\[J^d_{a_0} (f_a)_a\colon  J^d_{a_0}(z_a)_a\in J^d_{a_0}(\R^k,M)\mapsto 
J^d_{a_0} (f_a(z_a))_a\in J^d_{a_0}(\R^k, M)\]
  
\begin{rema}Suppose that $M$ is a surface. If $f_{a_0}$ has a hyperbolic fixed point $P$ with stable and unstable eigenvalues $\lambda_s,\lambda_u$ then 
$J^d_{a_0} (P_a)_a$ is the unique hyperbolic fixed point of $J^d_{a_0} (f_a)_a$. Moreover  the stable and unstable directions of $D_{J^d_{a_0} (P_a)_a} J^d_{a_0} (f_a)_a$ have the same dimension. The restriction of $D_{J^d_{a_0} (P_a)_a} J^d_{a_0} (f_a)_a$ to each of these spaces is the composition of $\lambda_s id$ (resp. $\lambda_u id$) with a nilpotent map. We observe that $W^s(J^d_{a_0}(P_a)_a)$ consists of $C^d$-jets of families $(Q_a)_a$ so that $Q_a$ is in $W^s(P_a; f_a)$ for every $a$.  
\end{rema}

More generaly, given a hyperbolic set $K$ for $f_{a_0}$, 
the set $J^d_{a_0} (K_a)_a:= \{J^d_{a_0}(h_a(\arr z))_a: \arr z\in \arr K\}$ is a hyperbolic compact set for  $(J^d_{a_0}(f_a))_a$. 
   
The first example of parablender was given in \cite{BE15}; in \cite{BCP16} a new example of parablender was given and therein the definition of parablender was formulated as:
\begin{defi}[$C^d$-Parablender]
A family $(K_a)_a$ of blenders for $(f_a)_a$  is a \emph{$C^d$-parablender} at $a={a_0}$ if the following condition is satisfied.
There exists a non-empty open set  $O$ of $C^d(\R^k, M)$ so that for every $(f'_a)_a$ $C^{d,d}_A$-close to  $(f_a)_a$,  for every $\gamma\in O$, there exist $\arr z\in \arr K$ and a $C^d$-family  $(Q_a)_a$ of points in a continuous family $(W^u_{loc}(\arr z; f'_a))_a$ of local unstable manifolds  satisfying:
\[d(\gamma(a), Q_a)= o(\|a-a_0\|^d)\; .\]

The open set $O$ is called a \emph{covered domain} for the $C^d$-parablender $(K_a)_a$. 
\end{defi}
\begin{rema}
We notice that if $J^d(K_a)_a$ is a blender for $J^d_{a_0}(f_a)_a$ then   $(K_a)_a$ is a $C^d$-parablender at $a_0$ for $(f_a)_a$. We do not know if it is a necessary condition. 
\end{rema}
\begin{exam}[$C^d$-Parablender]\label{expparablender} We propose here a small variation of Example 2.2 \cite{BE15}. Let $\Delta:=  \{-1,1\}^E$ with $E:= \{i=(i_1,\dots, i_k)\in \{0,\dots, d\}^k: i_1+\cdots +i_k\le d\}$.  Each  $\delta \in \Delta$ is seen as a function $\delta\colon i\in E\mapsto \delta(i)\in \{-1,1\}$.

Consider ${Card\, \Delta}$ disjoint segments $D:= \sqcup _{
\delta \in \Delta} I_\delta $ of $[-1/2,1/2]\setminus \{0\}$.

Let $Q\colon  \sqcup _{\delta\in \Delta} I_\delta \to [1,1]$ 
be a locally affine, orientation preserving map which sends each $I_\delta$ onto $[-1,1]$. Let $(\mathring f_a)_a$ be the $k$-parameter family defined by:
\[\mathring f_a(x,y) \colon(x,y)\in D\times [-3,3]\longmapsto \begin{array}{cc}
(Q(x), \frac 23 y  + \sum_{i\in E } \delta(i)\cdot a_{1}^{i_1}\cdots a_{k}^{i_k}) & \text{if } x\in I_\delta \; .
\end{array} \]
In the above definition we use the convention $0^0=1$. Then $(f_a)_a$ is a $C^\infty$-family of $C^\infty$-local diffeomorphisms. 

We notice that the maximal invariant set of $\mathring f_0$ is a blender $K$.

Let us define the following subset of $J^d_{0} \R^2$, with $a^i= \prod_{j=1}^k a_j^{i_j}$ for every $a\in \R^k$ and $j\in E$: 
\[\hat O:= \{\sum_{i\in E} (x_i,y_i) \cdot a^i
: |x_i|< 1,|y_i|< 2\}\quad,\quad 
\hat O':= \{\sum_{i\in E} (x_i,y_i) \cdot a^i
 : |x_i|\le 1/2,|y_i|\le 1\}\]
\[ \text{and}\quad \hat O_\delta:= \{\sum_{i\in E} (x_i,y_i) \cdot a^i
: |x_i|< 1, 0\le \delta(i)\cdot y_i< 2\}\, .\]
We observe that $\hat O=\cup_{\delta\in \Delta} \hat O_\delta=\hat O \Supset \hat O'$. Also for every $\delta \in \Delta$, 
Let $g_{a\; \delta}$ be the inverse of  $\mathring f_a| I_\delta\times [-3,3]$. Both are product dynamics of intervals. 

Let us show that  $J^d_0 (g_{a\; \delta})_a$  sends $\hat O_\delta$ into $\hat O'$.
The map $J^d_0 (\mathring f_a)_a$ is the composition of a hyperbolic map with a translation of by the $C^d$-Jet $\sum_E \delta(i) a_1^{i_1}\cdots a_k^{i_k}$. Thus $J^d_0 (g_{a\; \delta})_a$ is a composition of:
\begin{itemize}
\item a translation which sends $\hat O_\delta$ to
\[\{\sum_{i\in E} (x_i,y_i) \cdot a^i
: |x_i|< 2, -1\le \delta(i)\cdot y_i< 1\}\, .\]
\item a hyperbolic transformation which sends the latter into  $\hat O'$.
\end{itemize}


For every $\u \delta=(\delta_{i})_{i\le -1}\in \Delta^{\Z^-}$, for every $(f'_a)_a$ $C^{d,d}_A$-close to $(\mathring f_a)_a$ we define $W^u_{loc}(\u \delta; f'_a):= \cap_{n\ge 1} f_a'^n(I_\delta \times [-3,3])$. We notice  that $(W^u_{loc}(\u \delta; f'_a))_{\arr \delta \in \Delta^{\Z^-}}$ is a continuous family of unstable manifolds of the hyperbolic continuation $K'_a$ of $K$. 

Let $O$ be the set of families $\gamma$ of points so that $J^d_0 \gamma\in \hat O$. 
We notice that  there exists $\alpha>0$, so that given $(f_a)_a$ $C^{d,d}_A$-close to $(\mathring f_a)_a$, for all  $\|a_0\|\le \sqrt[k]{ 2}\alpha$  and $\gamma\in O$, the following property holds true:
There exists a sequence of preimages $(\gamma^{i})_{i\le -1}$  of $\gamma$ and symbols $\u \delta =(\delta(i))_{i\le -1}$ so that $\gamma^0=\gamma$, $J^d_{a_0} \gamma^{i+1}$ is in $\hat O_{\delta^i}$, $\gamma^i(a)= (f'_a| I_{\delta(i)} \times [-3,3])^{-1}(\gamma^{i+1}(a))$,  and $J^d_{a_0} \gamma^i$ is in $\hat O'$ for every $i\le -1$.
By proceeding like in Theorem B \cite{BCP16}, this implies the existence of a $C^d$-family $(Q_a)_a$ of points in $(W^u_{loc}(\u \delta; f'_a))_a$ so that $J^d_{a_0} (Q_a)_a = J^d_{a_0} (\gamma(a))_a$.

%

As the family $(\mathring f_{a+a_0})_a$ is close to $(\mathring f_{a})_a$ for every $a_0$ small, there exists $\alpha>0$ small such that $(K_a)_a$ is a $C^d$-parablender at every $\|a_0\|\le \sqrt[k]{ 2}\alpha$.

 Consequently the family $(f_a)_a:=(\mathring f_{\alpha \cdot a})$ displays the $C^d$-parablender $(K_a)_a=(\mathring K_{\alpha \cdot a})_a$ at every $a_0\in \I^k$, with covered domain containing every constant family of points in $[-1/2,1/2]\times [-1,1]$.

\end{exam}

\subsection{Statement of the main Theorem}
\label{Statement of the main theorem}
  
 Let $\mathcal U$ be the open set of $C^1$-local surface diffeomorphisms of a surface $M$ which have a blender $K$, an area contracting saddle fixed point $P$ and a projectively hyperbolic source $S$ with a strong unstable foliation $(\mathcal F^{uu},B)$ so that :
\begin{enumerate}[$(i)$]
\item[$(H_0)$] $S$ is in a domain 
robustly covered by a continuous family $(W^u_{loc}(\arr z; f))_{\arr z\in \arr K}$ of local unstable manifolds of the blender $K$.
\item[$(H_1)$] $K$ is included in $B$  and the  stable direction of $K$ is not tangent to $\mathcal F^{uu}$. 
\item[$(H_2)$] A segment of  $W^s(P; f)$  has a robust tangency with $\mathcal F^{uu} $ and $W^u(P;f)$ has a transverse intersection with $W^s(K;f)$.
\end{enumerate}
\begin{figure}[h]
    \centering
        \includegraphics[width=10cm]{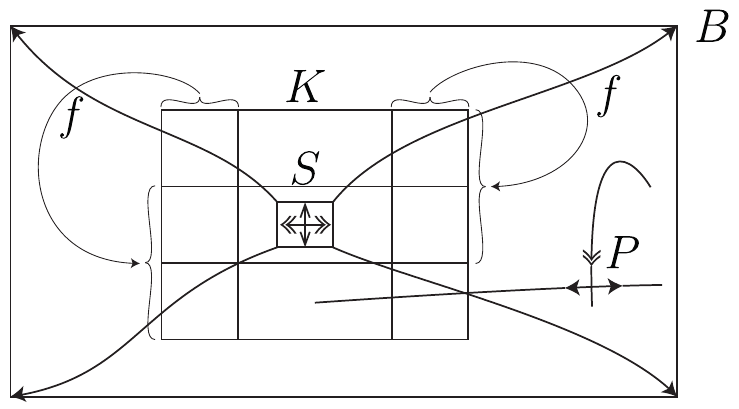}
\end{figure}
It will be clear after reading the sketch of proof of the main theorem that a $C^r$-generic diffeomorphism in $\mathcal U$ displays infinitely many sinks for every $\infty\ge r\ge 1$. See also \cite{New80,Asa08} for a parameter free argument.  The parametric version of this result  involves more hypotheses. 

Let  $(f_a)_a$ be a $C^{d,d}$-family of maps $f_a$ in $\mathcal U$ 
and denote by $K_a$, $P_a$ and $S_a$ the hyperbolic continuations of respectively  the blender, the saddle point and the source with which $f_a$ satisfies $(H_0-H_1-H_2)$ for every $a\in \I^k=[-1,1]^k$. Assume
that $(K_a)_a$ is a $C^d$-parablender at every $a_0\in \mathbb I^k$, and its covered domain  contains $J^d_{a_0} (S_a)_a$. More specifically there exists a continuous family of unstable manifolds $((W^u_{loc}(\arr  k; f_a))_a)_{\arr k\in \arr K}$ so that for every  $C^{d,d}$-perturbation $(f'_a)_a$ of $(f_a)_a$, with $(S'_a)_a$ the hyperbolic continuation of  $(S_a)_a$ it holds:
   \begin{itemize}
\item[$(H_3)$] For every $a_0\in \mathbb I^k$, there exists a $C^d$-family $(Q_a)_a$ of points $Q_a$ in $W^u_{loc} (\arr k; f_a')$ so that 
 $d(Q_a, S'_a)= o(\| a-a_0\|^d)$. In particular  $W^u_{loc}(\arr z;f'_{a_0})$  contains $S'_{a_0}$ .
 \item[$(H_4)$] $W^u_{loc}(\arr z ;f'_{a_0})$ is not tangent to the weak unstable direction of $S'_{a_0}$.
\end{itemize}  
%
%
%
%
Note that $(H_3)$ \& $(H_4)$ imply $(H_0)$. 

Let $\mathcal U^{d,d}$ be the open set of $C^{d,d}_A$-families of maps satisfying $(H_0-H_1-\cdots-H_4)$. 
We observe that for every $X\in \{A,PS\}$, and $1\le d\le r\le \infty$, the set $ {\mathcal U}_{X}^{d,r}:= {\mathcal U}_A^{d,d}\cap C^{d,r}_X(\I^k,M,M)$ is open for the  $ C^{d,r}_X$-topology. 
We recall that $\I^k:=[-1,1]^k$.

 \begin{theo}[Main theorem]\label{main}
For every $1\le k<\infty$, any topology $X\in \{A,PS\}$,  $1\le d\le r\le \infty$ with $d< \infty$, there exists a $C^{d,r}_X$-Baire generic set $\mathcal R$ in $ {\mathcal U}_{X}^{d,r}$ so that for every $(f_a)_a\in \mathcal R$ and every $a\in \I^k$, the map $f_a$ displays infinitely many sinks.
\end{theo}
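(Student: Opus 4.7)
\textbf{The plan} is to exhibit $\mathcal{R}$ as a countable intersection of open dense sets. For integers $N, T \ge 1$ set
\[
\mathcal{O}_{N,T} := \left\{(f_a)_a \in \mathcal{U}^{d,r}_X : \forall a \in \I^k,\ f_a \text{ has at least } N \text{ hyperbolic sinks of period } \le T\right\}.
\]
Each $\mathcal{O}_{N,T}$ is open since a hyperbolic sink of $f_{a_0}$ admits hyperbolic continuations on a $C^{d,r}_X$-neighborhood of the family and a neighborhood of $a_0$, and $\I^k$ is compact. Set $\mathcal{R} := \bigcap_{N} \bigcup_{T} \mathcal{O}_{N,T}$. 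It then remains to show, by induction on $N$, that $\bigcup_T \mathcal{O}_{N,T}$ is dense in $\mathcal{U}^{d,r}_X$: given $(f_a)_a$ with $N-1$ uniformly bounded-period sinks at every $a$, I would perturb within $\mathcal{U}^{d,r}_X$ (supported away from those sinks) to add one further sink at every $a \in \I^k$.

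\textbf{Single-parameter sink creation.} At a fixed parameter $a$ I would first reproduce an Asaoka-Berger style sink-creation argument. By $(H_2)$, $W^u(P_a) \pitchfork W^s(K_a)$, so the $\lambda$-lemma provides forward iterates of $W^u(P_a)$ accumulating on the continuous family of local unstable manifolds $(W^u_{loc}(\arr z; f_a))_{\arr z\in \arr K}$; by $(H_0)$, some of these pass through the covered domain containing $S_a$. Conversely, the robust tangency of $W^s(P_a)$ with the invariant foliation $\mathcal{F}^{uu}$ is pulled back by the inverse branch $g_a$ on the basin $B$, yielding tangencies of $W^s(P_a)$ with $\mathcal{F}^{uu}$ accumulating on $S_a$. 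A localized $C^{d,r}_X$-perturbation then produces, at parameter $a$, a homoclinic tangency for $P_a$; area-contraction at $P_a$ converts its unfolding into a hyperbolic sink by the standard Newhouse mechanism.

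\textbf{Parametric implementation via the parablender.} The heart of the theorem is to realize this construction uniformly over $\I^k$. For each $a_0 \in \I^k$, hypothesis $(H_3)$ supplies a preorbit $\arr z \in \arr K$ and a $C^d$-family $(Q_a)_a$ with $Q_a \in W^u_{loc}(\arr z; f_a)$ and $d(Q_a, S_a) = o(\|a-a_0\|^d)$, so that $(Q_a)$ matches $(S_a)$ at $a_0$ up to order $d$; hypothesis $(H_4)$ ensures $W^u_{loc}(\arr z; f_a)$ is transverse to the weak unstable direction. I would choose the perturbation so that, at $a_0$, its $C^d$-jet realizes an exact tangency between $W^s(P_a)$ and a forward iterate of $W^u_{loc}(\arr z; f_a)$ through $(Q_a)$. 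An implicit function theorem applied to the $C^d$-jets of tangency (in the spirit of Example \ref{expparablender} and \cite{BCP16}) then upgrades this jet-level contact into a genuine hyperbolic sink of $f_a$ for every $a$ in an open neighborhood $U_{a_0}$ of $a_0$. By compactness, finitely many $U_{a_0^1}, \ldots, U_{a_0^m}$ cover $\I^k$; performing the corresponding perturbations with pairwise disjoint phase-space supports (and disjoint from the existing $N-1$ sinks, from $K_a$, and from the orbit of $P_a$) yields a family with at least $N$ sinks at every $a$, completing the inductive step.

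\textbf{Main obstacle.} The hard part will be the case $d = r < \infty$, notably $d = r = 1$. When $r > d$ as in \cite{BE15, BE152}, the extra spatial derivatives of orders $d+1, \ldots, r$ absorb the $C^d$-jet matching without disturbing lower-order structure; at $d = r$ no such slack exists, and the parametric perturbation must realize $C^d$-jet precision using only $C^d$ spatial smoothness. The innovation signaled in the introduction -- placing the source $S$ inside the covered domain of the parablender -- provides the needed compensation: the inverse branch $g_a$ on $B$ contracts $\mathcal{F}^{uu}$ leaves strongly and the weak direction moderately, so a prescribed $C^d$-jet of families near $S_a$ can be assembled by pulling back along $g_a^n$, an operation that requires only $C^d$ spatial regularity. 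Verifying that $(H_3)$-$(H_4)$ leave enough freedom to realize the required $C^d$-tangency jets through $C^{d,d}_X$-localized perturbations -- while preserving hyperbolicity of the previously existing $N-1$ sinks and of $K_a$, $P_a$, $S_a$ -- is the technical crux.
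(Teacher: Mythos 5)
Your Baire scheme and your single-parameter sink creation are sound and essentially parallel to the paper's (the paper instead uses the sets $\mathcal V_N$ of families having a sink of period $\ge N$ at every parameter, which avoids carrying the previously created $N-1$ sinks through an induction, but either scheme reduces the theorem to a density statement). The genuine gap is in your parametric implementation, at the sentence ``performing the corresponding perturbations with pairwise disjoint phase-space supports''. All of your local constructions, for every base parameter $a_0^i$, are built from the \emph{same} source $S_a$, the same covered domain, and the same tangency structure between $W^s(P_a;f_a)$ and $\mathcal F^{uu}(S_a)$: the perturbation attached to the cube $U_{a_0^i}$ must be supported near the orbit of $S_a$ and near the (pre)orbit of the tangency point, and these are the same regions of phase space for every $i$. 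Hence the supports cannot be made disjoint, and the perturbation performed for one cube destroys the exact jet-matching $d(Q_a,S_a)=o(\|a-a_0\|^d)$ arranged for the neighbouring cubes. This is precisely the difficulty the paper isolates at the end of its sketch (the size of the parameter neighbourhood $V$ is bounded below only by the size of the allowed perturbation), and compactness of $\I^k$ alone does not resolve it.

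The missing idea is the replication of the source (Proposition \ref{propfonda3}, proved in Section 7), which is also where $(H_1)$ --- never used in your proposal --- enters. Composing the contracting inverse branch of the basin $B$ with inverse branches through the blender (possible because $K\subset B$ by $(H_1)$) produces near $S_a$ a collection of $(k+1)3^k$ distinct periodic, projectively hyperbolic sources $S_{ia}$ whose $C^d$-jets still lie in the covered domain of the parablender and whose strong unstable foliations are $C^2$-close to $\mathcal F^{uu}(S_a)$; a counting argument then selects $k+1$ of them per cube of a grid of $\I^k$ so that sources attached to overlapping cubes have disjoint orbits, and each perturbation is supported in a small ball around a distinguished preimage $S^{(-1)}_{ia}$ of its own source, making the supports genuinely disjoint. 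The $k+1$ sources per cube also handle a point you gloss over: the tangency of $W^s(P_a;f_a)$ with $\mathcal F^{uu}(S_{ia})$ can be made quadratic only outside a codimension-one submanifold of parameters (Thom transversality), and one needs $k+1$ such submanifolds in general position so that the union of their complements covers the $k$-dimensional cube. Finally, the uniformity that makes the grid work --- the perturbation placing each source exactly on its unstable manifold over a cube of radius $\eta'$ is $C^{d,\infty}$-small as $\eta'\to 0$, because the $(d+1)$-st parameter derivative of the relevant coordinate is bounded independently of $a_0$ --- together with the reduction to smooth families via Lemma \ref{lemmfonda} and the density of $\mathcal U^{\infty}$, is how the paper reaches the case $d=r$, rather than by the jet-pullback heuristic in your last paragraph.
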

   
   \begin{exam}\label{examfonda}
Here is a variation of \textsection 4 of \cite{BE15} in which  we add a source. 

 Let  $(f_a)_{a\in \I^k} \colon D\times [-3,3]\to [-1,1]\times  [-4,4]$ be the map of example \ref{expparablender} exhibiting a $C^d$-parablender $(K_a)_a$ at every $a\in \I^k$ and the constant family $(0)_{a\in \I ^k}$ in its covered domain.
  We recall that $D$ is formed by ${Card\, \Delta}$-intervals of $(-1,1)\setminus \{0\}$.   
Let $I_S, I_P,I_{P'}\subset (-1,1) \setminus D$ be disjoint segments, so that $I_S$ is centered at $0$. We extend $Q$ to 
$D\sqcup I_S\sqcup  I_P\sqcup  I_{P'}$ so that $Q$ remains locally affine and orientation preserving, and sends as well  $I_S$, $  I_P$, $I_{P'}$ onto $[1,1]$. 
Let $x_S\in I_S$ and $x_P\in I_P$ be  fixed points of $Q$. Let $x_{P'}$ be the preimage by $Q|I_{P'}$ of $x_P$.  Let:
\[f_a(x,y) \colon(x,y)\in (D\sqcup I_S\sqcup  I_P\sqcup  I_{P'})
\times [-3,3]\mapsto \left\{\begin{array}{cc}
f_a(x,y) & \text{if } x\in D\\
(Q(x), \frac{y}{\sqrt{|I_S|}}) & \text{if } x\in I_S, \\
(Q(x),  |I_P|^2 y) & \text{if } x\in I_P, \\
(y -(x-x_{P'})^2+x_P,x_{P'}-x) & \text{if } x\in I_{P'}, \\
\end{array}\right. \]
With $\hat \R$ the one point compactification of $\R$, since $Q$ is orientation preserving, it is  easy to extend $f$ to a local diffeomomorphism of the torus $\hat \R^2$ of degree $Card\, \Delta+3$.

We notice that $S=(0,0)$ is a projectively hyperbolic source $S$ with vertical weak unstable direction, hence transverse to the local unstable manifold of $(K_a)_a$ (which are of the form $([-1,1]\times \{y_a\})_a$).

 Note also that the hyperbolic continuation of $S$ is the constant family $(0)_a$ and so belongs to the covered domain of the  $C^d$-parablender $(K_a)_a$ at every $a_0\in \I^k$.

Also $P=(x_P,0)$ is an area contracting saddle fixed point, with vertical local stable manifold. The preimage of this local stable manifold in $I_{P'}\times [-1,1]$ is the graph of the function $x\mapsto (x-x_{P'})^2$. It has a robust tangency with $\mathcal F^{uu}(S)$ whose leaves are all horizontal. 

Consequently  $(f_a)_{a\in \I^k}$ belongs to $\mathcal U^{d,d}_A\cap C^{\infty,\infty}(\I^k, M,M)$.

Hence by Theorem \ref{main}, for every $\infty\ge r\ge d$ and $X\in \{A,PS\}$, a $C^{d,r}_X$-Baire generic perturbation of $(f_a)_a$ displays infinitely many sinks at every parameter $a\in \I^k$. 
\end{exam}

A corollary of the above example and of the proof of the Main theorem is:
\begin{coro}\label{theo2}
For every compact manifold of dimension $\ge 3$, for all $\infty > r\ge d\ge 1$, $\infty >k\ge 0$, and $X\in \{PS,A\}$, there exists an open set  $\hat U$ in $C^{d,r}_X(\I^k, M,M)$ of families $(\hat f_a)_a$ of \emph{diffeomormphisms} $\hat f_a\in Diff^r(M)$   and a Baire residual set $\mathcal R$ in  $\hat U$ so that for every $(f_a)_a\in \mathcal R$, for every $a\in \I_k $, the map $f_a$ displays infinitely many sinks.
\end{coro}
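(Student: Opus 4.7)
The plan is to derive Corollary \ref{theo2} from the Main Theorem by embedding the surface-local-diffeomorphism family of Example \ref{examfonda} into a family of diffeomorphisms of a $3$-manifold via a skew-product construction, in such a way that sinks and the hypotheses $(H_0)$--$(H_4)$ transfer between the two pictures. Starting from $(f_a)_a \in \mathcal U_A^{d,d}\cap C^{\infty,\infty}(\I^k, M, M)$ with $f_a$ of degree $n = \mathrm{Card}\,\Delta + 3$, I would first build a $C^\infty$-family $(\hat f_a)_a$ of diffeomorphisms of a closed $3$-manifold $\hat M$ containing $M \times D^1$ as a tubular neighborhood of the zero-section. In coordinates $(z, t) \in M \times D^1$ the map is a skew product $\hat f_a(z, t) = (F_a(z, t), t/n)$ with $F_a(z, 0) = f_a(z)$; the non-injectivity of $f_a$ is resolved by the strongly contracting transverse direction (via the degree-$n$ structure of $t \mapsto n t$ on the inverse side), making $\hat f_a$ a bona fide diffeomorphism with a hyperbolic attractor $\hat \Lambda_a = \bigcap_{i \ge 0} \hat f_a^{\,i}(M \times D^1)$ and a continuous factor projection $\pi_a\colon \hat \Lambda_a \to M$ conjugating $\hat f_a|\hat\Lambda_a$ to the natural extension of $f_a$. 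The same construction, with a further linear contraction in $D^{m-2}$, works for any compact manifold of dimension $m \ge 3$.

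Next, I would consider a small $C^{d,r}_X$-neighborhood $\hat U \subset C^{d,r}_X(\I^k, \hat M, \hat M)$ of $(\hat f_a)_a$. By parametric persistence of normally hyperbolic attractors applied uniformly in $a$, every $(\hat f'_a)_a \in \hat U$ still has a persistent attractor whose factor projection induces a family $\Phi((\hat f'_a)_a) := (f'_a)_a$ of surface local diffeomorphisms in $\mathcal U_X^{d,r}$. The map $\Phi$ is continuous for the $C^{d,r}_X$-topologies, and it is open: any perturbation $(f'_a)_a$ of $(f_a)_a$ in $\mathcal U_X^{d,r}$ can be lifted to $\hat f'_a(z,t) := (F'_a(z,t), t/n)$ with $F'_a(z,0) = f'_a(z)$, where $F'_a$ interpolates to $F_a$ off a small $t$-neighborhood of $0$, yielding a perturbation of $\hat f_a$ with $\Phi((\hat f'_a)_a) = (f'_a)_a$. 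Hypotheses $(H_0)$--$(H_4)$ are preserved by $\Phi$ and this lifting, since the blender, saddle $P$, source $S$ and their invariant manifolds have natural hyperbolic lifts in $\hat M$ reproducing the same geometric configuration on the attractor.

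I would then apply Theorem \ref{main} to the family $(f'_a)_a$, obtaining a $C^{d,r}_X$-Baire residual subset $\mathcal R' \subset \mathcal U_X^{d,r}$ on which $f'_a$ displays infinitely many sinks at every parameter. Setting $\mathcal R := \Phi^{-1}(\mathcal R')$ gives a Baire residual subset of $\hat U$ (by continuity and openness of $\Phi$). For every $(\hat f'_a)_a \in \mathcal R$, each sink $p$ of $f'_a$ lifts to a sink $\hat p$ of $\hat f'_a$ on the persistent attractor, since the multipliers of $\hat f'_a$ at $\hat p$ are those of $f'_a$ at $p$ together with a transverse multiplier close to $1/n$, all of modulus strictly less than $1$. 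Hence $\hat f'_a$ displays infinitely many sinks for every $a \in \I^k$, as required.

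The main obstacle is the parametric openness of $\Phi$: one needs a parametric version of the persistence theorem for normally hyperbolic Williams-type attractors, continuous in the $C^{d,r}_X$-topology, together with the smooth surgery that lifts perturbations of $(f_a)_a$ to perturbations of $(\hat f_a)_a$ through the skew-product. This is the parametric counterpart of the classical fact that surface endomorphisms arise as factor dynamics of invertible $3$-dimensional hyperbolic attractors, and it constitutes the technical core of the reduction; once it is in hand the statement follows directly from the Main Theorem and the preservation of sinks under normally hyperbolic lifts.
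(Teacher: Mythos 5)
Your overall strategy (suspend the surface endomorphism family to a family of diffeomorphisms by a contracted skew product, project a perturbed family back to the base along the strong stable fibration, apply the Main Theorem downstairs, and lift the sinks back up because the fibers are contracted) is the same as the paper's. But the step you yourself flag as "the technical core" --- continuity and parametric openness of the factor map $\Phi$ in the $C^{d,r}_X$-topologies, together with "$(H_0)$--$(H_4)$ are preserved by $\Phi$" --- is precisely where the argument breaks, and it cannot be repaired in the form you use it. The holonomy along the perturbed strong stable foliation loses one derivative: as the paper notes, the induced base family $(f'_a)_a$ is in general only $C^{d-1,d-1}_A$-close to the model family, even for a $C^{d,d}_A$-small perturbation upstairs. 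Hence you cannot conclude that $\Phi((\hat f'_a)_a)$ lies in the open set $\mathcal U^{d,r}_X$ by proximity, nor that $\Phi$ is continuous/open between these topologies, and therefore the inference "$\mathcal R:=\Phi^{-1}(\mathcal R')$ is residual" is unjustified. The paper's substitute is quite different in structure: it restricts to the dense set of $C^{d+r+1}$-smooth families upstairs (so the strong stable fibration and the projected family are of class $C^{d+r}$, Lemma \ref{5.1}), it \emph{re-verifies} $(H_0)$--$(H_4)$ for the projected family by hand (Fact \ref{dernierfact}), including a genuine re-proof that $(K'_a)_a$ is a $C^d$-parablender with covered domain $\hat O$ via the invariant class of vertical codimension-two submanifolds and the combinatorics of Example \ref{expparablender} --- none of this follows from "natural hyperbolic lifts" --- and then, for each period bound $N$, it lifts the sink-creating perturbation explicitly through the fibration, so that "a sink of period $\ge N$ at every parameter" is open and dense upstairs; the residual set is the intersection over $N$, with no pullback of a residual set under $\Phi$ ever needed.

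A secondary slip: your lifting formula $\hat f_a(z,t)=(F_a(z,t),t/n)$ with $F_a(z,0)=f_a(z)$ is not injective at $t=0$, since two preimages $z_1\neq z_2$ of the same point with the same fiber coordinate have equal images there; the fiber coordinate of the image must depend on the base point, as in the paper's $(z,h)\mapsto (f_a(z),\lambda h+\rho(z))$, where $\rho$ separates the inverse branches. This is fixable, but as written the suspension is not a diffeomorphism.
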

The proof will be done in section \ref{diffcase}. We will add 
an extension to the argument of the corresponding corollary of \cite{BE15}, in order to cover the new classes of regularity.

\section{Sketch of proof}
Let $k\ge 1$, $r\ge d\ge 1$ with $d<\infty$   and $X\in \{A,PS\}$. 
To avoid technical difficulties, we will work only with $C^\infty$-families in ${\mathcal U}^{d,r}_X$, which are families in   ${\mathcal U}^\infty $, where:
\[ {\mathcal U}^\infty := {\mathcal U}^{d,d}_A\cap C^{\infty,\infty}(\I^k,M,M)= {\mathcal U}^{d,r}_X\cap C^{\infty,\infty}(\I^k,M,M),\quad \I^k=[-1,1]^k\]

 We observe that ${\mathcal U}^\infty$ is a dense set in ${\mathcal U}^{d,r}_X$ for the  $C^{d,r}_X$-topology. It is also dense in the following space:
\[ {\mathcal U}^{d,\infty} := {\mathcal U}^{d,d}_A\cap C^{d,\infty}(\I^k,M,M)= {\mathcal U}^{d,r}_X\cap C^{d,\infty}(\I^k,M,M)\] 
 
 The following lemma enables us to work only with the spaces 
${\mathcal U}^{d,\infty}$ and ${\mathcal U}^{\infty}$.
 
 \begin{lemm}\label{lemmfonda}
The main Theorem holds true if for every $N>0$, there is a $C^{d,\infty}$-dense set in ${\mathcal U}^{d,\infty}$ of families $(f_a)_a\in \mathcal U^{\infty}$, so that for every $a\in \I^k$, the map $f_a$ displays  a sink of period at least $N$.
\end{lemm}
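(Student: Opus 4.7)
The plan is to derive the main theorem from the hypothesis by a standard Baire category argument, combining $C^1$-persistence of hyperbolic sinks with a comparison of the two topologies at hand.

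For each integer $N\ge 1$, define
\[\mathcal V_N := \{(f_a)_a \in \mathcal U^{d,r}_X : \text{for every } a\in \I^k,\; f_a \text{ has a sink of period at least } N\}.\]
The space $\mathcal U^{d,r}_X$, being an open subset of a Fr\'echet space, is Baire, so once we show that $\mathcal V_N$ is both $C^{d,r}_X$-open and $C^{d,r}_X$-dense for every $N$, the intersection $\mathcal R:=\bigcap_{N\ge 1}\mathcal V_N$ is a $C^{d,r}_X$-Baire generic set; every $(f_a)_a\in \mathcal R$ then displays, at each parameter $a\in \I^k$, sinks of arbitrarily large period, hence infinitely many sinks, which is precisely the conclusion of Theorem \ref{main}.

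Openness of $\mathcal V_N$ follows from the persistence of hyperbolic periodic sinks together with compactness of $\I^k$. Fix $(f_a)_a\in \mathcal V_N$. For each $a_0\in \I^k$, let $p_{a_0}$ be a sink of $f_{a_0}$ of some period $m=m(a_0)\ge N$. Applying the implicit function theorem to $f^m-\mathrm{Id}$ and using that the derivative of $f^m_{a_0}$ at $p_{a_0}$ is contracting, one obtains a $C^{d,r}_X$-neighborhood $\mathcal W_{a_0}$ of $(f_a)_a$ and a parameter neighborhood $U_{a_0}$ of $a_0$ such that every family in $\mathcal W_{a_0}$ admits, for every parameter in $U_{a_0}$, a hyperbolic sink of period $m$ close to $p_{a_0}$. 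By compactness of $\I^k$, a finite subcover $\{U_{a_0^{(i)}}\}_{i=1}^L$ suffices, and $\bigcap_{i=1}^L \mathcal W_{a_0^{(i)}}$ is an open $C^{d,r}_X$-neighborhood of $(f_a)_a$ contained in $\mathcal V_N$.

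For density, given $(g_a)_a\in \mathcal U^{d,r}_X$ and a $C^{d,r}_X$-neighborhood $\mathcal W$ of $(g_a)_a$, first approximate $(g_a)_a$ by some $(g'_a)_a\in \mathcal U^\infty\cap \mathcal W$, which is possible since $\mathcal U^\infty$ is $C^{d,r}_X$-dense in $\mathcal U^{d,r}_X$ as noted just above the lemma. The hypothesis provides a $C^{d,\infty}$-dense subset of $\mathcal U^{d,\infty}$ contained in $\mathcal V_N\cap \mathcal U^\infty$, so we may approximate $(g'_a)_a$ in the finer $C^{d,\infty}$-topology by some $(h_a)_a\in \mathcal V_N$; since the $C^{d,\infty}$-topology refines the $C^{d,r}_X$-topology (and coincides with it when $r=\infty$), $(h_a)_a$ lies in $\mathcal W\cap \mathcal V_N$. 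The main obstacle to spell out carefully is the openness step, where one needs $C^{d,r}_X$-closeness of families to control $C^r$-closeness, hence $C^1$-closeness, of the individual maps $f_a$ uniformly in $a\in \I^k$; this is built into the very definition of the $C^{d,r}_X$-topology, whose seminorms are taken over compact subsets of $\I^k\times M$.
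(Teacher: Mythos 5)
Your proposal is correct and follows essentially the same route as the paper: the paper's proof is exactly the Baire argument with $\mathcal V_N$ asserted to be open and dense in $\mathcal U^{d,r}_X$, and your text merely fills in the standard details (persistence of hyperbolic sinks plus compactness of $\I^k$ for openness; density of $\mathcal U^\infty$ in $\mathcal U^{d,r}_X$ together with the fact that the $C^{d,\infty}$-topology refines the $C^{d,r}_X$-topology for density).
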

\begin{proof}
For every $N\ge 1$, the set $\mathcal V_{N}$ of families $(f_a)_a\in \mathcal U^{d,r}_X$ such that $f_a$ has a sink of period at least $N$ for every $a\in \I^k$ is open and dense in $\mathcal U^{d,r}_X$. Hence the following set is Baire residual in $ {\mathcal U}^{d,r}_X$:
$\mathcal R := \bigcap_{N\in \N} \mathcal V_{N}$.
We observe that for every $(f_a)_a\in \mathcal R$, for every $a\in \I^k$, the map $f_a$ has a sink of arbitrarily large period. Hence $f_a$ has infinitely many sinks.
\end{proof}
\medskip

We recall that by $(H_3)$, for any $(f_a)_a\in {\mathcal U}^\infty $ and any $a_0\in \I^k $ the source $(S_a)_a$ has its $C^d$-jets at $a=a_0$ in the covered domain of the $C^d$-parablender $(K_a)_a$. This means that there exists $\arr Q\in \arr K$ and a $C^\infty$-family $(Q'_a)_a$ of points  in the local unstable manifold $(W^u_{loc}(\arr Q;f_a))_a$ so that $d(Q'_a,S_a)= o(\|a-a_0\|^d)$. Hence a $C^{d,\infty}$-perturbation of $(f_a)_a$ puts $S_a$ at $Q'_a$  for all $a$ in a compact neighborhood $V$ of $a_0$. 
We observe that the maximal $\eta$-ball centered at $a_0$ and contained in $V$ is bounded from below only by the size of the allowed perturbation. We are going to keep $V$ intact in all the next steps.

By the second part of $(H_2)$, the unstable manifold of $P_a$ has a transverse intersection with a stable manifold of $K_a$. Hence by the following parametrized inclination lemma, $(W^u(P_a; f_a))_a$ accumulates on  $(W^u_{loc}(\arr Q; f_a))_a$.

\begin{lemm}[Parametrized inclination Lemma 1.7 \cite{BE15} ]\label{inclilemma}
Let $(f_a)_a$ be a smooth family of  local diffeomorphisms leaving invariant a hyperbolic compact set $(K_a)_a$ with unstable direction of dimension $d_u$. 
Let $(C_a)_a$ be a smooth family of submanifolds of dimension $d_u$ and intersecting transversally a local stable manifold of $K_a$. 

Then, for any $\arr Q\in \arr K$, for any local unstable manifold $(W^u_{loc}(\arr Q; f_a))_a$, there exists $C'_a\subset C_a$ so that the family  $(f_a^n(C'_a))_a$ is $C^\infty$-close to $(W^u_{loc}(\arr Q; f_a))_a$ when $n$ is large.
\end{lemm}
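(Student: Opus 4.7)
The plan is to lift the parametrized problem to a skew product on the extended phase space $\I^k\times M$ and then apply a classical inclination lemma for partially hyperbolic dynamics to that lift. Concretely, I define $F\colon \I^k\times M\to \I^k\times M$ by $F(a,z)=(a,f_a(z))$; since $(f_a)_a$ is smooth, $F$ is a smooth local diffeomorphism preserving each fiber $\{a\}\times M$. The compact set $\widetilde K:=\bigcup_{a}\{a\}\times K_a$ is $F$-invariant, and the uniform fiberwise hyperbolicity of $(K_a)_a$ makes $\widetilde K$ normally hyperbolic with neutral center direction tangent to the parameter factor $\I^k$.

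Next I reinterpret the data in this extended system. By Proposition \ref{Wupara} the family $(W^u_{loc}(\arr Q;f_a))_a$ is smooth, so the union $\widetilde W^u_{loc}(\arr Q):=\bigcup_{a}\{a\}\times W^u_{loc}(\arr Q;f_a)$ is a smooth center-unstable leaf of $\widetilde K$ of dimension $k+d_u$, and likewise $\bigcup_{a}\{a\}\times W^s_{loc}(K_a)$ is a smooth center-stable set. The family $(C_a)_a$ assembles into a smooth submanifold $\widetilde C:=\bigcup_{a}\{a\}\times C_a$ of dimension $k+d_u$. Fiberwise transversality $C_a\pitchfork W^s_{loc}(K_a)$ upgrades, thanks to the common center factor, to transversality of $\widetilde C$ with the center-stable set inside $\I^k\times M$.

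The heart of the argument is then a graph-transform estimate. Choose local coordinates $(a,x^s,x^u)$ near a transverse intersection point in which the center-unstable leaf is $\{x^s=0\}$ and $\widetilde C$ is the graph of a smooth map $(a,x^u)\mapsto \phi_0(a,x^u)\in E^s$. Under iteration by $F$, the unstable coordinate is expanded and the stable coordinate is contracted with uniform rates, while the parameter direction is left untouched. A standard induction shows that after restricting to a small piece $\widetilde C'\subset \widetilde C$ and rescaling the $x^u$ factor, $F^n(\widetilde C')$ remains a graph of a map $\phi_n$, and the joint contraction/expansion forces $\|\phi_n\|_{C^\ell}\to 0$ for every $\ell\ge 0$; the neutrality of the center direction does not spoil this because no expansion occurs in $a$, so each derivative $\partial_a^i\partial_{x^u}^j\phi_n$ is controlled by geometric series in the fiberwise hyperbolicity constants. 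Taking $n\to\infty$ yields $C^\infty$-convergence of $F^n(\widetilde C')$ to $\widetilde W^u_{loc}(\arr Q)$ in the ambient coordinates.

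Finally, since $F$ preserves fibers, $F^n(\widetilde C')=\bigcup_a \{a\}\times f_a^n(C'_a)$ with $C'_a$ the fiber over $a$, and $C^\infty$-convergence in the product coordinates is precisely the $C^{\infty,\infty}$-convergence of the family $(f_a^n(C'_a))_a$ to $(W^u_{loc}(\arr Q;f_a))_a$ required in the statement. The main technical obstacle is ensuring the graph-transform estimates hold uniformly in $a$ and simultaneously in all orders of partial derivatives; this is the only point where the neutrality of the center direction is delicate, and it is resolved by the standard observation that neutrality is compatible with $C^\infty$-normal hyperbolicity because the contraction/expansion rates in the fiber dominate the $(=1)$ rates along $\I^k$, so Hadamard's graph-transform lemma applies in $C^\ell$ for every $\ell$ with no loss of smoothness.
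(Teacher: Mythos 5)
The paper itself does not prove this lemma: it is imported verbatim as Lemma~1.7 of \cite{BE15}, so there is no internal proof to compare against. Your reduction to the skew product $F(a,z)=(a,f_a(z))$, with $\widetilde K=\cup_a\{a\}\times K_a$ partially hyperbolic with neutral centre along $\I^k$, is the natural (and standard) route, and the two quantitative points you isolate are handled correctly: fibrewise transversality of $C_a$ with $W^s_{loc}(z_a;f_a)$ does upgrade to transversality of $\widetilde C$ with the centre-stable set because both unions fibre over all of $\I^k$; and all $C^\ell$ graph-transform contractions survive the neutral centre because the relevant factors for the mixed derivatives $\partial_a^i\partial_{x^u}^j\phi_n$ are of the order $\lambda\mu^{-j}\le\lambda<1$, the stable rate beating the rate $1$ of the parameter direction, which is also why the limit family is $C^\infty$ (consistent with Proposition~\ref{Wupara}).

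There is, however, one genuine gap: the identification of the limit. The graph transform you set up near the transverse intersection point produces convergence of $F^n(\widetilde C')$ to the local unstable manifolds along the forward orbit of the point $z\in K$ whose local stable manifold is crossed, i.e.\ to $(W^u_{loc}(\arr z^{(n)};f_a))_a$ where $\arr z^{(n)}$ is the preorbit of $f_a^n(z_a)$ determined by the orbit of $z_a$ --- not to $(W^u_{loc}(\arr Q;f_a))_a$ for the \emph{arbitrarily prescribed} $\arr Q\in\arr K$ of the statement; your third paragraph silently identifies the centre-unstable leaf near the intersection point with $\widetilde W^u_{loc}(\arr Q)$. The local argument alone cannot give the conclusion for every $\arr Q$: if $K$ consists of two disjoint saddle fixed points and $C_a$ crosses the stable manifold of the first, its iterates never approach the unstable manifold of the second. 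Reaching a prescribed $\arr Q$ requires an extra step: continuity of the unstable lamination combined with recurrence (transitivity and local product structure of $K$, which hold for the blenders to which the lemma is applied in this paper), e.g.\ a second application of the inclination lemma once the image disk, being $C^1$-close to an unstable leaf, crosses the local stable manifolds of points whose finite backward itineraries approximate $\arr Q$. Two smaller points: the graph transform must be carried out in a uniform chain of adapted charts along the orbit (the iterates leave the single chart you fixed), and the subdisk $C'_a$ should be allowed to depend on $n$ (or one should say that $f_a^n(C'_a)$ \emph{contains} a disk $C^\infty$-close to $W^u_{loc}(\arr Q;f_a)$); both are routine, whereas the missing recurrence step is the real issue.
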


%
Hence by the second part of $(H_2)$ and the parametrized inclination lemma, we can assume that after an arbitrarily small $C^{d, \infty}$-perturbation, the family $(f_a)_a$ is of class  $C^\infty$ and there exists a segment a segment $\Gamma_a^u$  of $W^u(P_a; f_a)$ which contains $S_a$ for every $a\in V$.

By the first part of $(H_2)$, we can perturb $(f_a)_a$ so that a segment of $W^s(P_a;f_{a_0})$ has a quadratic tangency with a leaf of $\mathcal F^{uu}(S_{a_0})$ for every $a\in U$, where $U$ is equal to $V$ without a (possibly empty) $1$-codimensional submanifold. We recall that two curves of a surface have a \emph{quadratic tangency} if they are tangent, and their curvatures are different at a tangency point. 

We are going to construct a perturbation of $(f_a)_a$ which displays a persistent  homoclinic tangency:

\begin{defi}[Persistent homoclinic tangency]
Let $(f_a)_{a\in \I^k}$ be a smooth family of surface local-diffeomorphisms and $(P_a)_{a\in \I^k}$ a saddle periodic point. 

The saddle point $P_a$ has a \emph{homoclinic tangency} if  $W^u(P_a; f_a)$ is   tangent to $W^s(P_a; f_a)$ at one point $H_a$.

The homoclinc tangency is \emph{persistent} for $a$ in an open subset $U\subset \I^k$, if there exist a smooth family $(\Gamma^u_a)_{a\in U}$ of embedded segments in $(W^u(P_a;f_a))_a$ and a smooth family of points $(H_a)_{a\in V}\in (\Gamma_a^u)_{a\in U}$ so that $W^s(P_a;f_a)$ is tangent to $\Gamma_a^u$ at $H_a$ for every $a\in U$. 

%

\end{defi}

For the sake of simplicity, let us assume that $V=U$ (the tangency of $W^u_{loc}(P_a; f_a)$ with $\mathcal F^{uu}$ is robustly quadratic) and that $V$ is compact. This extra hypothesis can be assumed when $d\ge 2$.

The following proposition implies that for every $a_0\in \I^k$, there exists a dense set in ${\mathcal U}^{d,\infty}$ of families $(f_a)_a\in {\mathcal U}^\infty$ so that $P_a$ has a persistent homoclinic tangency for $a$ in the compact neighborhood $V$ of $a_0$. 

  \begin{prop}\label{tangencycreation}
Let $V\subset \I^k$ be a compact subset. Let $(f_a)_{a\in \I^k} $ be a $C^\infty$-family of diffeomorphisms, which has a projectively hyperbolic source $(S_a)_a$. Let $(C_a)_a$ be a smooth family of  embedded curves $C_a$, so that for every $a\in V$, $C_a$  has a quadratic tangency with $\mathcal F^{uu}(S_a)$ at a point $H_a$ depending continuously on $a\in V$. 
Let $(W_a)_{a}$ be a smooth family of embedded curves so that  
 for every $a\in V$, $W_a$ contains $S_a$ in its interior and $T_{S_a} W_a$ is not equal to the weak unstable direction of $S_a$.  

Then there exists a smooth perturbation $(W'_a)_a$ of $(W_a)_a$ and $n\ge 0$ so that $f^{n}_a(W'_a)$ has a quadratic tangency with $C_a$ which persists for $a\in V$.  
\end{prop}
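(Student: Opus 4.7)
The plan is to pull the tangency data at $H_a$ backward along the foliation $\mathcal{F}^{uu}$ toward $S_a$, where we have flexibility to modify $W_a$, and then push forward by $f_a^n$.

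First I would upgrade the continuous $a$-dependence of $H_a$ to smooth dependence via the implicit function theorem applied to the function measuring failure of tangency between $C_a$ and $\mathcal{F}^{uu}$: the quadratic tangency hypothesis ensures the relevant derivative is nonzero. Let $\ell_a$ denote the leaf of $\mathcal{F}^{uu}(S_a)$ tangent to $C_a$ at $H_a$, and let $g_a$ be the inverse branch of $f_a$ on the basin of $S_a$ containing $H_a$. Setting $P_a^{(n)}:=g_a^n(H_a)$, compactness of $V$ gives $P_a^{(n)}\to S_a$ uniformly in $a\in V$ as $n\to\infty$, and $f_a$-invariance of $\mathcal{F}^{uu}$ gives that $g_a^n(\ell_a)$ is the leaf of $\mathcal{F}^{uu}$ through $P_a^{(n)}$, converging in $C^\infty$ on a fixed neighborhood of $S_a$ to $\ell^{uu}(S_a)$.

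For $n$ sufficiently large, I would construct $W'_a$ by a smooth bump modification of $W_a$ supported in a small neighborhood of $P_a^{(n)}$ (shrinking with $n$), arranging that $W'_a$ passes through $P_a^{(n)}$ with the same $2$-jet at $P_a^{(n)}$ as the leaf $g_a^n(\ell_a)$. Since all of these ingredients depend smoothly on $a$, $(W'_a)_{a\in V}$ is a smooth family. As $f_a^n$ is a diffeomorphism, $2$-jets are preserved, so $f_a^n(W'_a)$ passes through $H_a$ with the same $2$-jet there as $f_a^n(g_a^n(\ell_a))=\ell_a$. Combining with the hypothesis (i.e.\ $C_a$ and $\ell_a$ share a $1$-jet but not a $2$-jet at $H_a$), I conclude that $C_a$ and $f_a^n(W'_a)$ also share a $1$-jet but not a $2$-jet at $H_a$: a quadratic tangency. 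Persistence throughout $V$ is immediate from the smooth $a$-dependence of the whole construction.

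The delicate point I expect is in what topology $(W'_a)_a$ counts as a perturbation of $(W_a)_a$: the prescribed tangent direction at $P_a^{(n)}$, namely the strong unstable direction $E^{uu}(P_a^{(n)})$, generically differs by an $O(1)$ angle from $T_{S_a}W_a$, so the bump cannot be made $C^1$-small, only $C^0$-small via shrinking support. I expect this weaker closeness to suffice, since in the paper's global strategy $W_a$ is to be realized as an unstable manifold and a $C^{d,\infty}$-small perturbation of $f_a$, localized near $S_a$, can be chosen to produce the prescribed $2$-jet at $P_a^{(n)}$ in place of the bare bump.
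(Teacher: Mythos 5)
There is a genuine gap, and it is exactly at the point you flag at the end. Your construction forces $W'_a$ to pass through $P^{(n)}_a=g_a^n(H_a)$ tangentially to the leaf $g_a^n(\ell_a)$, i.e.\ tangent to $E^{uu}(P^{(n)}_a)$; since the hypothesis only says $T_{S_a}W_a$ is \emph{not} the weak unstable direction, this prescribed direction generally makes an angle bounded away from $0$ with $T_{S_a}W_a$, so $(W'_a)_a$ is only $C^0$-close to $(W_a)_a$, never $C^1$-close. That is not what the proposition (as it is used in the paper) requires: $W_a$ is a segment $\Gamma^u_a$ of $W^u(P_a;f_a)$, and the curve perturbation must afterwards be realized by an arbitrarily small $C^{d,\infty}$-perturbation of $(f_a)_a$ supported in a small ball near a preimage of $S_a$ (Section 6), which forces $(W'_a)_a$ to be $C^\infty$-small close to $(W_a)_a$. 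Your proposed remedy --- that a $C^{d,\infty}$-small perturbation of $f_a$ localized near $S_a$ can impose the prescribed $2$-jet at $P^{(n)}_a$ --- cannot work: tangent directions of unstable manifolds depend continuously on the dynamics in the $C^1$ topology, so a small perturbation of $f_a$ moves $T_{P^{(n)}_a}\Gamma^u_a$ only slightly and cannot produce an $O(1)$ rotation at that point. The quadraticity and persistence parts of your argument ($2$-jets are preserved by the diffeomorphism $f_a^n$, and matching the $2$-jet of $\ell_a$ at $H_a$ yields a quadratic tangency with $C_a$ depending smoothly on $a$) are fine; the problem is solely that the object you build is not a small perturbation in the relevant topology.

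The paper avoids this by never bending $W_a$ at all. It lifts everything to $\R^2\times\mathbb P\R^1$ via $Tf_a$: the pair $(S_a,E^{uu}_a)$ is a hyperbolic fixed point there, with local stable manifold the fiber $\{S_a\}\times\mathbb P\R^1$ minus the weak unstable direction, and the quadratic tangency of $C_a$ with $\mathcal F^{uu}(S_a)$ means precisely that the lifted curve $TC_a$ meets $W^u_{loc}(E^{uu}_a;Tf_a)$ transversally. The parametrized inclination lemma then makes the lifted preimages $TC^n_a$ accumulate in $C^\infty$ on that fiber, so along $f_a^{-n}(C_a)$ near $S_a$ the tangent direction sweeps essentially all of $\mathbb P\R^1$; in particular $TC^n_a$ contains a point whose direction is \emph{exactly} $v_a=T_{S_a}W_a$, at a base point $C^\infty$-close to $S_a$. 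A small translation $W'_a=W_a+\rho(a)z^n_a$ then already produces the (persistent, quadratic) tangency, and being a translation it is $C^\infty$-small and hence realizable by a small perturbation of the dynamics. In short: rather than rotating $W_a$ to fit the strong unstable direction, one uses that the pulled-back curve attains the direction $T_{S_a}W_a$ somewhere near $S_a$; if you want to rescue your scheme you would need this kind of argument (or an equivalent one) to eliminate the $O(1)$ change of tangent direction.
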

This proposition will be proved in section \ref{tangencycreationsec}.

Indeed by $(H_4)$ we can apply Proposition \ref{tangencycreation} with $W_a$ equal to the segment of the unstable manifold $\Gamma_a^u$ containing 
$S_a$ and $C_a$ the segment  of stable manifold of $P_a$ which is robustly tangent to $\mathcal F^{uu}(S_a)$, for every $a$ in the compact neighborhood $V$ of $a_0$. 

This implies that up to a $C^{d,\infty}$-perturbation, we can assume that the saddle point $P_a$ displays a robust homoclinic tangency for every $a\in V$. 

Then the following proposition implies that for every $N\ge 0$, up to a $C^{d,\infty}$-perturbation, the dynamics $f_a$ displays a sink of period at least $N$ for every $a$ in  $V$.
%
 \begin{prop}\label{sinkcreation}
Let $V\subset \I^k$ be a compact subset. Let $(f_a)_{a\in \I^k} $ be a $C^\infty$-family of local diffeomorphisms. We suppose that for every $a\in V$, the map $f_a$ has an area contracting saddle point $P_a$ which displays a  persistent homoclinic tangency at $H_a$ for  $a\in V$. Then for every $N\ge 1$ and $\eta>0$, there exists a smooth perturbation $(f'_a)_a$ such that for every $a\in V$ :
\begin{itemize}
\item  for every $z$ which is not in the $\eta$-ball $B(H_a,\eta)$, it holds $f_a(z)=f_a'(z)$,
\item the map $f_a$ has a sink of period at least $\ge N$.

\end{itemize}   
\end{prop}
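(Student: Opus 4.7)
My plan is to apply the classical renormalization argument at a quadratic homoclinic tangency of an area-contracting saddle, taking care to make every step depend smoothly on the parameter $a\in V$.

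\textbf{Step 1: Parametric linearization.} First, I would construct, using compactness of $V$ and smoothness of the family, parametric $C^k$-linearizing coordinates near $P_a$ (for any prescribed finite $k$) in which $f_a$ takes the form $(x,y)\mapsto (\lambda_a x, \sigma_a y)$, with $|\lambda_a|<1<|\sigma_a|$ and area-contraction $|\lambda_a\sigma_a|<1$. The local stable and unstable manifolds of $P_a$ become the coordinate axes, and some bounded iterate $k_0$ ensures that $H_a$ enters the domain of linearization after $k_0$ forward and after $k_0$ backward iterations, continuously in $a\in V$.

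\textbf{Step 2: Renormalization.} Next, for large $n = m+2k_0$, I would write the first-return map $f_a^n$ to a small neighborhood $U_a$ of $H_a$ as the composition of $f_a^{k_0}$ (bringing $H_a$ near the unstable axis of $P_a$), of $m$ iterates of the linearized map, and of $f_a^{k_0}$ again (bringing the image back to $U_a$). A standard computation analogous to Section~4 of \cite{BE15} will show that after an affine rescaling of order $\sigma_a^{-m}$, this composition converges in the $C^k$-topology, as $m\to\infty$, to a family of H\'enon-like maps
\[
T_{a,m}(u,v) = \bigl(b_a(m)\, R_a(u,v),\ v^2 + c_a(m)\bigr),
\]
where $b_a(m) = (\lambda_a\sigma_a)^m \to 0$ by area-contraction and $c_a(m)$ is a continuous function of $a$ governed by the position of the tangency. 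Uniformity over $a\in V$ will follow from compactness.

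\textbf{Step 3: Tuning $c_a(m)$ by a local perturbation.} The one-dimensional limit $v\mapsto v^2+c$ has an attracting fixed point for $c\in(-3/4,1/4)$, and this fixed point persists robustly as an attracting fixed point of $T_{a,m}$ whenever $b_a(m)$ is small. To place $c_a(m)$ inside this window for every $a\in V$, I would perturb $f_a$ near $H_a$ by a smooth bump $\chi_a = \tau_a\cdot \psi\cdot e_v$, where $\psi$ is a fixed $C^\infty$-bump supported in $B(H_a,\eta_a)\subset B(H_a,\eta)$ equal to $1$ at $H_a$, and $e_v$ is a unit vector transverse to $W^s_{loc}(P_a)$. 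Such a perturbation shifts $c_a(m)$ by approximately $\sigma_a^{2m}\tau_a$, so since $|c_a(m)|$ is uniformly bounded on $V$, I can choose $\tau_a$ smoothly in $a$ with $|\tau_a|=O(\sigma_a^{-2m})$, arbitrarily small. Fix $m$ so large that $n = m+2k_0 \ge N$ is prime and $\|\chi_a\|_{C^0}<\eta$; the resulting $f'_a := f_a + \chi_a$ agrees with $f_a$ outside $B(H_a,\eta)$ and has an attracting periodic point of period dividing $n$. Since $n$ is prime and the sink sits near $H_a$ rather than at $P_a$, its period is exactly $n\ge N$.

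\textbf{Main obstacle.} The hard part will be ensuring $C^\infty$-dependence on $a$ of the whole construction, including the linearizing chart and the bump amplitude $\tau_a$, so that $(f'_a)_a$ remains a $C^\infty$-family. I would use the parametric Sternberg linearization theorem together with compactness of $V$; convergence of the H\'enon-like renormalization in finitely many derivatives suffices to produce the attracting fixed point, but controlling the errors uniformly in $a$ requires the family to be at least $C^2$ in $a$, which is granted. The area-contraction $|\lambda_a\sigma_a|<1$ is essential both in creating and in guaranteeing the attractiveness of the sink.
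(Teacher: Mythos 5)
Your proposal takes a genuinely different route from the paper (which never linearizes nor renormalizes: it applies the parametric inclination lemma to a small vertical segment through $H_a$, finds $Q_a$ with $f_a^n(Q_a)=Q'_a$ both in that segment, closes the orbit by a perturbation supported in $B(H_a,\eta)$ with prescribed derivative, and gets a sink because the tangency swaps the stable/unstable directions and $|\lambda_a\sigma_a|<1$). As written, however, your renormalization route has genuine gaps. The central one is in Step 3: the claim that $|c_a(m)|$ is uniformly bounded, so that a bump of amplitude $O(\sigma_a^{-2m})$ places it in the window $(-3/4,1/4)$, is false at an exact tangency. The return box for $f_a^{m+2k_0}$ sits at distance of order $\sigma_a^{-m}$ from $W^s_{loc}(P_a)$, while the tip of the returning parabola sits at distance $O(\lambda_a^m)\ll\sigma_a^{-m}$; in the rescaled coordinates this means $c_a(m)$ diverges (of order $-\sigma_a^m$), which is precisely why the classical statement is about \emph{unfoldings} of tangencies. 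The conclusion can be saved because the translation actually needed is of order $\sigma_a^{-m}$, still arbitrarily small, but then you must rerun the renormalization for the one-parameter unfolding $f_a+\mu\,\psi\,e_v$ and solve for $\mu=\tau_a$ smoothly in $a\in V$; the quantitative heart of your Step 3 has to be redone.

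Two further points. The proposition (and the paper's definition of persistent homoclinic tangency) does not assume the tangency is quadratic, and the paper's proof never uses quadraticity; your limit family $v\mapsto v^2+c$ requires nondegenerate contact, so you need an additional preliminary perturbation, supported in $B(H_a,\eta)$ and persistent over all $a\in V$, creating a quadratic tangency -- this is not addressed. Finally, the ``parametric Sternberg'' step is not innocuous: $C^k$-linearization with $k\ge2$ requires non-resonance of $(\lambda_a,\sigma_a)$, which may fail at some parameters of $V$, and compactness does not help; nor can you remove resonances by perturbation, since you are only allowed to modify $f_a$ inside $B(H_a,\eta)$, away from $P_a$. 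You would have to substitute $C^1$-linearization or adapted normal forms with smooth parameter dependence -- exactly the technical machinery that the paper's inclination-lemma argument is designed to avoid.
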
 
This proposition will be proved in section \ref{proofofsinkcreation}.

By Lemma \ref{lemmfonda}, to prove the main theorem, it suffices to construct for every $N\ge 0$,  a dense set  in ${\mathcal U}^{d,\infty}$ of smooth families which have a sink of period $\ge N$ for every $a\in \I^k$. This is stronger that what was sketched above: this was proved only for $a\in V$, where the size of $V$ was determined by the size of the perturbation allowed when  $(H_3)$ was used. 

To overcome this difficulty, our strategy is to replicate the source $S$ to others satisfying also $(H_2)$ and $(H_3)$. The replication process uses $(H_1)$ and $(H_4)$. It leads us to the following Proposition proved in  section \ref{proofpropfonda3}.

%
 
\begin{prop}\label{propfonda3}
There is a dense set in ${\mathcal U}^{d,\infty}$ formed by families  $(f_a)_a\in {\mathcal U}^\infty$ which satisfy the following property:

There exists a finite open covering $(U_i)_i$ of  $\I^k$ so that for every $i$ there exist a projectively hyperbolic  source $(S_{ia})_{a\in U_i}$ and a continuous of family of segments $(\Gamma^{u}_{ia})_a$ of $(W^u(P; f_a))_a$ such that:
\begin{enumerate}[$(i)$]
\item $S_{ia}$ is in $\Gamma_{ia}^{u}\subset W^u(P_a; f_a)$, for every $a\in U_i$.
\item $T_{S_{ia}} \Gamma_{ia}^{u}$ is not the weak unstable direction of $S_{ia}$,  for every $a\in U_i$.       
\item There exists a (smooth) family $(H_{ia})_{a\in U_i}$ of points  in $(W^s(P_a; f_a))_{a\in U_i}$ at which $W^s(P_a; f_a)$ has a quadratic tangency with the strong unstable foliation of $S_{ia}$,  for every $a\in U_i$.
\item For every $a\in U_i\cap U_j$ with $i\not= j$, the sets $(f^k_a(H_{ia}))_{k\ge 0}$ and   $(f^k_a(H_{ja}))_{k\ge 0}$ are disjoint and the orbits  $(f^k_a(S_{ia}))_{k\ge 0}$ and $(f^k_a(S_{ja}))_{k\ge 0}$ are disjoint. 
\end{enumerate}
\end{prop}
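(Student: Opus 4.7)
The plan is to use the compactness of $\I^k$ to reduce to a finite construction, then build the required structures inductively via localized $C^{d,\infty}$-perturbations. I will cover $\I^k$ by finitely many open balls $(U_i)_{i=1}^N$ of small diameter centered at parameters $(a_0^i)_{i=1}^N$. For each $i$ I will produce a projectively hyperbolic periodic source $(S_{ia})_{a\in U_i}$ lying on a segment $\Gamma_{ia}^u$ of $W^u(P_a; f_a)$ transverse there to the weak unstable direction of $S_{ia}$, together with a tangency point $H_{ia}$ of $W^s(P_a; f_a)$ with $\mathcal{F}^{uu}(S_{ia})$. All perturbations at step $i$ will be supported in a region of phase space disjoint from the orbits of the objects produced at previous steps, which yields (iv) automatically. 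Because the total perturbation is a finite sum of arbitrarily $C^{d,\infty}$-small localized terms, the cumulative $C^{d,\infty}$-norm can be made as small as desired.

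For the base case $i=1$, I apply $(H_3)$ at $a_0^1$ to obtain $\arr Q_1\in \arr K$ and a $C^\infty$-family $(Q_a^1)_a\subset W^u_{loc}(\arr Q_1; f_a)$ with $d(Q_a^1, S_a) = o(\|a-a_0^1\|^d)$. A $C^{d,\infty}$-small perturbation near the orbit of $S_a$ arranges $S_{1a}=Q_a^1$ for $a\in U_1$. By the second clause of $(H_2)$ combined with Lemma \ref{inclilemma}, a long forward iterate of a suitable sub-segment of $W^u(P_a;f_a)$ is $C^\infty$-close to $W^u_{loc}(\arr Q_1; f_a)$; after a further small perturbation the corresponding segment $\Gamma_{1a}^u\subset W^u(P_a;f_a)$ passes through $S_{1a}$, and (ii) follows from $(H_4)$. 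For (iii), since $S_{1a}$ is $o(\|a-a_0^1\|^d)$-close to $S_a$, the foliation $\mathcal{F}^{uu}(S_{1a})$ is $C^0$-close to $\mathcal{F}^{uu}(S_a)$; the robust quadratic tangency given by the first clause of $(H_2)$ then persists and yields a family $(H_{1a})_{a\in U_1}$.

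For the inductive step, I pick a preorbit $\arr Q_{i+1}\in \arr K$ distinct from $\arr Q_1,\dots,\arr Q_i$ (geometrically disjoint unstable leaves exist since $\arr K$ is Cantor-like by hyperbolicity), apply $(H_3)$ to get $(Q_a^{i+1})_a\subset W^u_{loc}(\arr Q_{i+1}; f_a)$, and make a $C^{d,\infty}$-small perturbation of $f_a$ supported in a thin tubular neighborhood of a forward iterate of the approaching piece of $W^u(P_a; f_a)$ to create a new periodic source $(S_{(i+1)a})_a$ sitting on this iterate. The support of the perturbation is chosen disjoint from all previously constructed orbits and their iterates, which is possible since each previous orbit is compact and the tube around it can be shrunk arbitrarily. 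Conditions (i)--(iii) then follow as in the base case, using that $S_{(i+1)a}$ can be chosen close to $Q_a^{i+1}$, hence close to $S_a$, so that $\mathcal{F}^{uu}(S_{(i+1)a})$ is close to $\mathcal{F}^{uu}(S_a)$ and the tangency from $(H_2)$ transfers.

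The main obstacle is placing a periodic \emph{source} on the unstable manifold of a different periodic point: a fixed or periodic source cannot lie on the unstable set of another periodic point under a diffeomorphism. Here I exploit that $f_a$ is only a local diffeomorphism, so $S_{(i+1)a}$ admits many backward preorbits, and I need to verify that the local perturbation preserves at least one preorbit branch accumulating on $P_a$. This should be achievable because the perturbation is supported in a tiny region disjoint from the chosen backward trajectory, so that trajectory remains an inverse branch of $f_a^n$ after perturbation and continues to converge to $P_a$. Maintaining this forward-backward compatibility simultaneously for all $i$, while keeping the orbit-disjointness required for (iv) and the cumulative perturbation small, is the technical heart of the argument; its successful execution depends crucially on hypothesis $(H_1)$ (which puts the blender inside the basin of $S$, so preorbits approach $S$-related structures) and on $(H_4)$ (which ensures the transversality needed for (ii) to be robust).
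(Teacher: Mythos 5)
Your scheme misses the two ideas that the paper's proof of this proposition actually turns on. First, you propose to \emph{create} each new periodic source by an ad hoc localized perturbation ``sitting on an iterate of $W^u(P_a;f_a)$'', but you give no mechanism producing a periodic, expanding, projectively hyperbolic orbit there, let alone one that stays on the unstable manifold (or on a blender leaf) for \emph{every} parameter of an open box --- which is a persistent codimension-one condition and is exactly the unfolding problem the parablender is designed to solve. In the paper the replicated sources are not created by perturbation at all: they are genuine periodic orbits of the unperturbed map, obtained as fixed points of the very contracting compositions $\phi^n_0\circ\psi^m_0\circ\phi^n_0$ of inverse branches through the basin $B$ and along the blender leaf containing $S$ (this is where $(H_1)$ enters), with uniform bounds on period and expansion; these bounds give a uniform estimate $|\partial_a^{d+1}y_i|\le C_{d+1}$ on the deviation from the straightened unstable leaf, and it is this uniformity, after shrinking the parameter grid $\eta'$, that lets the $(H_3)$-jet matching be corrected by a $C^{d,\infty}$-small perturbation on boxes of \emph{a priori controlled} size. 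Your plan fixes the cover ``of small diameter'' first and asserts the correction is small, which is precisely the circularity the paper flags in the sketch of proof (``the size of $V$ was determined by the size of the perturbation allowed when $(H_3)$ was used''); moreover your base-case perturbation is supported ``near the orbit of $S_a$'', while every later source must also be chosen close to $S_a$ so that its strong unstable foliation is close to $\mathcal F^{uu}(S_a)$, so your claim that the supports can be taken disjoint from all previously constructed orbits is unsubstantiated.

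Second, item $(iii)$ requires a \emph{quadratic} tangency of $W^s(P_a;f_a)$ with $\mathcal F^{uu}(S_{ia})$ for every $a\in U_i$, and $(H_2)$ only provides a \emph{robust} tangency, which the paper explicitly allows to be non-quadratic. With a single source per parameter ball, as in your construction, the best one gets after a generic perturbation of the foliation is quadraticity outside a codimension-one submanifold of parameters, so the ball is not covered. This is the whole reason for the cardinalities in the paper's argument: it produces $(k+1)3^k$ sources per grid point, runs a combinatorial selection over the grid $\Z_{\eta'}$ (the $3^k$ accounts for neighbouring boxes) to retain $k+1$ sources per box with pairwise $\delta$-distant orbits --- which is what really yields $(iv)$ --- and then perturbs the $k+1$ foliations $\mathcal F^{uu}(S_{ia})$ independently so that the $k+1$ codimension-one ``non-quadratic'' parameter sets are multi-transverse in the $k$-dimensional box; their complements $U_i$ then form the required open cover. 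Your proposal contains neither the replication-by-inverse-branches construction nor this counting/multi-transversality step, so both the persistence of $(i)$ and $(iii)$ over open parameter sets and the disjointness $(iv)$ remain unproved.
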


In section \ref{propfonda3main}, a development of the above sketched argument (where $(S_a)_a$ is replaced by $(S_{ia})_{a\in \mathcal U_i}$) will prove that Propositions \ref{tangencycreation}, \ref{sinkcreation} and \ref{propfonda3} imply Lemma \ref{lemmfonda}. We recall that  Lemma \ref{lemmfonda} implies the main theorem. 


\section{Tangency creation (Proof of Prop. \ref{tangencycreation})}
\label{tangencycreationsec}
In this section we consider a $C^\infty$-family $(f_a)_a$ of  diffeomorphisms of $\R^2$ which display a projectively hyperbolic source $S_a$ for every $a$ with strong unstable direction $E^{uu}_a$.

It is useful to regard the following smooth bundle automorphism over $f_a$:
\[T f_a\colon \R^2\times \mathbb P\R^1\to \R^2\times \mathbb P\R^1\] 
\[(z,L)\mapsto (f_a(z), D_zf_a(L))\]

We notice that the strong unstable direction $E^{uu}_a$ is a hyperbolic point for $T f_a$ with unstable direction $\R^2$ and stable direction the tangent space $\R$ of the second coordinate $\mathbb P\R^1$.

The following well known proposition is important: 
\begin{prop}\label{Fuusmooth}
The strong unstable  foliation $\mathcal F^{uu}(S_a)$ on the
 neighborhood of $S_a$  is of class $C^\infty$ and depends $C^\infty$ on $a\in \I^k$.
\end{prop}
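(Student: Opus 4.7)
The plan is to realize the line field $E^{uu}$ (and then the foliation $\mathcal F^{uu}$) as a stable/unstable object for the auxiliary bundle dynamics $Tf_a$ already introduced above the statement, and then invoke the standard parametric stable/unstable manifold theorem to obtain $C^\infty$-smoothness and $C^\infty$-dependence on the parameter $a$.

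First I would examine the linearization of $Tf_a$ at the point $(S_a, E^{uu}_a)\in \R^2\times \mathbb P\R^1$. On the base $\R^2$ the derivative is $D_{S_a}f_a$, whose eigenvalues $\sigma_u,\sigma_{uu}$ both have modulus strictly larger than $1$, so the base direction is fully expanded. On the fibre $\mathbb P\R^1$, the induced projective map fixes both eigendirections, and a direct computation shows that its derivative at $E^{uu}_a$ equals $\sigma_u/\sigma_{uu}$, hence of modulus strictly less than $1$. Thus $(S_a,E^{uu}_a)$ is a hyperbolic fixed point of $Tf_a$ with a $2$-dimensional unstable direction (the base) and a $1$-dimensional stable direction (the fibre). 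Moreover, since $(f_a)_a$ is smooth in $(a,z)$, so is the family $(Tf_a)_a$, and the hyperbolic fixed point $(S_a,E^{uu}_a)$ depends smoothly on $a$.

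Next I would apply the smooth (parametric) unstable manifold theorem to the fixed point $(S_a,E^{uu}_a)$ of $Tf_a$: this produces a local unstable manifold $\mathcal W^u_a\subset \R^2\times \mathbb P\R^1$ of class $C^\infty$ which depends $C^\infty$ on $a$. Because its tangent space at $(S_a,E^{uu}_a)$ is the horizontal $\R^2$ factor, $\mathcal W^u_a$ is, after shrinking, the graph of a $C^\infty$ map $L_a\colon U_a\to \mathbb P\R^1$ defined on a neighborhood $U_a$ of $S_a$ in $\R^2$, with $L_a(S_a)=E^{uu}_a$ and with $L_a$ depending $C^\infty$ on $a$. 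By invariance of $\mathcal W^u_a$ under $Tf_a$, the line field $L_a$ is $Df_a$-invariant, and by the fibre-contraction at $E^{uu}_a$ any other continuous $Df_a$-invariant line field tangent to the $f_a^{-n}$-orbit of $U_a$ must be attracted to $L_a$; hence $L_a$ coincides with the continuous extension of $E^{uu}(S_a)$ provided just before the statement.

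Finally, having obtained a $C^\infty$ line field $L_a$ on $U_a$, depending $C^\infty$ on $a$, the foliation $\mathcal F^{uu}(S_a)$ is obtained by integrating $L_a$. Since $L_a$ is an everywhere-nonzero $C^\infty$ line field on a surface and depends smoothly on the parameter, smooth dependence of ODE flows on initial conditions and on parameters yields a foliation whose leaves are $C^\infty$ curves varying $C^\infty$ in $(a,z)$, which gives exactly the desired statement.

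I do not expect a real obstacle here: once the projective bundle trick is set up, everything reduces to the textbook smooth unstable manifold theorem with parameters and to smooth integration of a line field on a surface. The only point requiring a line of justification is the identification of the integrated line field with the previously defined $E^{uu}$ on all of $B$; this is handled by iterating backwards and using that continuous $Df_a$-invariant line fields over a basin of the source with value $E^{uu}_a$ at $S_a$ are unique.
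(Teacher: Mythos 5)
Your proposal is correct and takes essentially the same route as the paper: regard $(S_a,E^{uu}_a)$ as a hyperbolic fixed point of $Tf_a$ with the fibre of $\R^2\times \mathbb P\R^1$ as stable direction, take its $C^\infty$ local unstable manifold (smooth in $a$ by the parametric invariant manifold theorem, Prop.~\ref{Wupara}), identify it with the tangent line field of $\mathcal F^{uu}(S_a)$ by backward iteration/uniqueness, and integrate. The only cosmetic imprecision is that the unstable subspace at $(S_a,E^{uu}_a)$ need not be exactly the horizontal $\R^2$ (the derivative of $Tf_a$ has a base-to-fibre coupling term); it is merely transverse to the fibre, which is all your graph argument actually requires.
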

\begin{proof}
Observe that $(S_a,E^{uu}(S_a))$ is a hyperbolic point of $Tf_a$ and that the tangent space of $\mathcal F^{uu}(S_a)$ is its local unstable manifold. Indeed, every vector $u$ tangent to $\mathcal F^{uu}(S_a)$ at a point $z$ displays a backward orbit by $Tf_a$ which converge to $(S_a,E^{uu}(S_a))$.

 As $T f_a$ is of class $C^\infty$, its local unstable manifold and so the foliation $\mathcal F^{uu}(S_a)$ are of class $C^\infty$. As the local unstable manifold  of a hyperbolic point of a smooth family of diffeomorphisms depends smoothly on the parameter by Prop. \ref{Wupara}, the foliation $\mathcal F^{uu}(S_a)$ depends smoothly on $a$.
%
%
%
\end{proof}

We are now ready to prove Proposition \ref{tangencycreation}.
We recall  that $(C_a)_a$ denotes a  $C^\infty$-family of  embedded curves $C_a$, and $V$ a compact set of $\I^k$ so that for every $a\in V$, the curve $C_a$  has a quadratic tangency with $\mathcal F^{uu}(S_a)$ at a point $H_a$ depending continuously on $a$. Also $(W_a)_{a}$   denotes a $C^\infty$-family of embedded curves $W_a$ which contains $S_a$  for every $a\in V$ and so that $T_{S_a} W_a$ is not equal to the weak unstable direction of $S_a$. 
We are going to show the existence of a $C^\infty$-perturbation of $(W'_a)_a$ of $(W_a)_a$ and $n\ge 0$ so that $f^{n}_a(W_a)$ displays  a quadratic tangency with $C_a$ for every $a\in V$.

%
%
\begin{figure}[h]
    \centering
        \includegraphics[width=10cm]{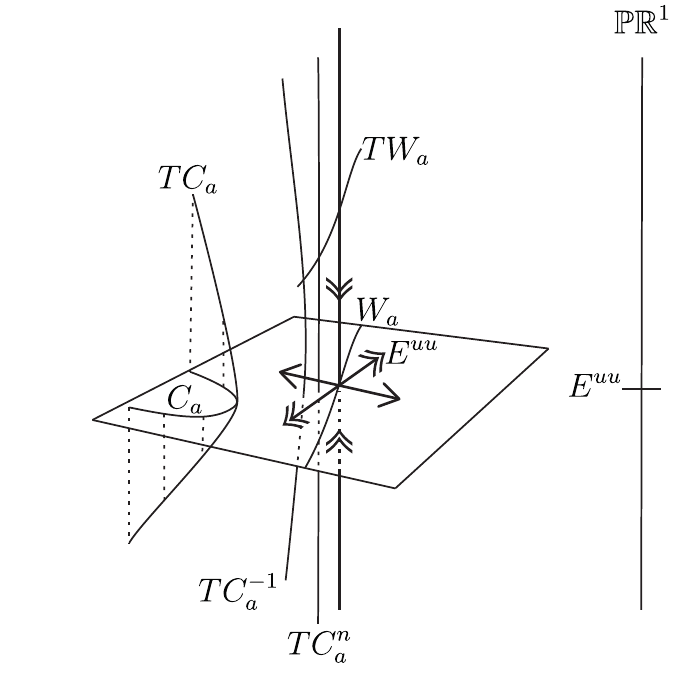}
\end{figure}

\begin{proof}[Proof of Proposition \ref{tangencycreation}]
We continue with the above notion by denoting $T f_a\colon \R^2\times \mathbb P\R^1\to \R^2\times \mathbb P\R^1$ the action of $(f_a)_a$ on the line bundle of $\R^2$. Let $E^{uu}_a\in \{S_a\}\times \mathbb P\R^1$ be the strong unstable direction of $S_a$.

We observe that the tangent bundle $TC_a$ of $C_a$ is an embedded curve $T C_a$ in $\R^2\times \mathbb P\R^1$. As $C_a$ is tangent at one point to $\mathcal F^{uu}(S_{a})$, the curve $T C_a$ intersects $W^u_{loc}(E^{uu}_a; T f_a)$. As the  tangency is quadratic, this intersection is transverse. 

Hence by the inclination lemma, the preimages  $(TC^n_a)_{n\le -1}$ of $T C_a$ by $T f_a$ accumulate on $W^s_{loc}(E^{uu}_a; T f_a)$, for the $C^\infty$-topology. 
By the parametric inclination lemma  \ref{inclilemma}, the preimages  $((T C^n_a)_a)_{n\le -1}$ of $T C_a$ by $(T f_a)_a$ accumulate on $(W^s_{loc}(E^{uu}_a; T f_a))_a$, for the $C^\infty$-topology. 
 
Remark that $W^s(E^{uu}_a,Tf_a)$ is $\{S_a\}\times \mathbb P\R^1$ without the weak unstable direction of $S_a$.

Note also that the tangent bundle $TW_a$  of $W_a$ is an embedded curve of $\R^2\times \mathbb P\R^1$. The curve $W_a$ contains $S_a$ and is not tangent to the weak unstable direction of $S_a$, hence $TW_a$ intersects $W^s_{loc}(E^{uu}_a;Tf_a)$ at $T_{S_a}W_a=:(S_a,v_a)$. We observe that $a\mapsto v_a\in \mathbb P\R^1$ is of class $C^\infty$. 

As $ (T C^n_a)_a $ contains a segment close to $W^s_{loc}(E^{uu}_a;Tf_a)$, there exists a point $(x_a,y_a)\in\R^2$ so that $(x^n_a,y^n_a,v_a)_a$ is in $ (T C^n_a)_a $ and is $C^\infty$-close to   $(S_a,v_a)_a$.  

Consequently, there exists $(z^n_a)_a$ $C^\infty$-small so that 
$T C^n_a$ intersects $TW_a+(z^n_a,0)= T(W_a+z^n_a)$. 
Let  $\rho$ be a compactly supported bump function equal to $1$ on $V$. We notice that the proposition is proved with the perturbation $W_a'= W_a+\rho(a)\cdot z^n_a$ which is small for $n\le 0$ large. 
\end{proof}
\begin{rema}\label{tangencycreation2} In view of the above proof, the  source $(S_a)_a$ of Proposition \ref{tangencycreation} does not need to be fixed: it can be a periodic source. Then the same conclusion holds true. 
\end{rema}

  \begin{rema}\label{rematechnique} Moreover, the tangency point $\tilde H^{n}_a$ of $C_a^{n}$ with $W'_a$ satisfies that $(f^k_a ( \tilde H^{n}_a))_{k=n}^0$ is close to $(f_a^{k}(H_a))_{k=n}^0$, where $n\le 0$ is defined in the proof. 
  
Indeed the curve $TC_a^{n}$ being close to be vertical, the point $\tilde H^{n}_a$ is close to $f_a^{n}(H_a)$. Also the curve 
  $TC_a^{n}$ is contracted by $Tf^k _a$ for  every $a$ and for every $n\le k\le 0$, since $TC_a^{n}$ is close to the local stable manifold $W^s_{loc}(E^{uu}_a;Tf_a)$.
  \end{rema}
  
  \section{Sinks Creation (proof of Prop. \ref{sinkcreation})} 
\label{proofofsinkcreation}
The following section is devoted to the proof of Proposition \ref{sinkcreation2} below which implies Proposition \ref{sinkcreation}

Let $V\subset \R^k$ be a compact subset. For every $\eta>0$ small, let $V_\eta$ be the $\eta$-neighborhood of $V$. 
 Let $(f_a)_{a\in \R^k} $ be a $C^\infty$-family of local diffeomorphisms. We suppose that for every $a\in V$, the map $f_a$ has an area contracting saddle point $P_a$ which displays a  persistent homoclinic tangency at $H_a$ for  $a\in V$.

In other words, the $C^\infty$-family  $(H_a)_a$ is formed by the tangency points of the local stable manifolds $(W^s_{loc}(P_a; f_a))_a$ with a smooth family $(\Gamma_a^u)_a$ of embedded segments in $(W^u(P_a;f_a))_a$.
Let $(H^{-j}_a)_{j\le 0}$  be the $j^{th}$-preimage of $H_a$ defined using the inverse branches defining $\Gamma_a^u$. We observe that $H^0_a=H_a$ and that this presequence converges to $P_a$. We define:

\[\mathcal O(H_a)= \{H^{-j}_a : {j\le 0}\}\cup\{f_a^j(H_a):j\ge 0\}.\]

 \begin{prop}\label{sinkcreation2}
 For every $M\ge 1$ and $\eta>0$, there exists a smooth perturbation $(f'_a)_a$ such that:
\begin{itemize}
\item  for every $a\notin V_\eta$ or $z\notin B(H_a,\eta)$, it holds $f_a(z)=f_a'(z)$,
\item for every the map $f_a$ has a sink  $A_a$  of period at least $\ge M$ and with orbit in the $\eta$-neighborhood of $\mathcal O(H_a)$.
\item the sinks $A_a$ depends smoothly on $a\in V$, and the family  $(A_a)_{a\in V}$ of points is $C^\infty$-close to $(H_a)_{a\in V}$.  
\end{itemize}   
\end{prop}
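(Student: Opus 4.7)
The proposition is a parametric version of Newhouse's classical mechanism: a homoclinic tangency of an area-contracting saddle can be perturbed into a sink. The plan is to construct a first-return map of high period $T\geq M$ on a small box near $H_a$, recognize it (after affine rescaling) as a strongly dissipative H\'enon-like map, and use a smooth bump supported in $B(H_a,\eta)$ to tune its ``H\'enon parameter'' into a regime where this return map has an attracting fixed point, uniformly in $a\in V$. Smooth dependence of the attracting fixed point on $a$ will then follow from smooth dependence of the whole construction on $a$.

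\textbf{Normal form and return map.} First, apply a parametric Sternberg--Chen normal form theorem at $P_a$: fix $k_0=k_0(M)$ large and choose $C^{k_0}$ coordinates $\Phi_a$ on a uniform neighborhood $U$ of $P_a$, depending $C^\infty$ on $a\in V$, in which $f_a$ is polynomial with linear part $(x,y)\mapsto(\lambda_s(a)x,\lambda_u(a)y)$, $|\lambda_s|<1<|\lambda_u|$ and $|\lambda_s\lambda_u|<1$. Next, choose integers $N_1,N_2$ so that $f_a^{N_1}(H_a)$ and $H^{-N_2}_a$ lie deep in $U$ for every $a\in V$, with the $N_1$-orbit landing on the local stable and the $N_2$-pre-orbit on the local unstable branch. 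For each large $n$, a small box near $H_a$ is transported by $f_a^{N_1}$ into $U$, iterated $n$ times under the normal-form dynamics, and returned by $f_a^{N_2}$ near $H_a$; this defines a first-return map $F_a=f_a^{T}$ on a sub-box, with $T=N_1+n+N_2\geq M$ by taking $n$ sufficiently large.

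\textbf{H\'enon-like form and perturbation.} After an affine rescaling depending smoothly on $a$, the quadratic tangency at $H_a$ combined with the linear saddle dynamics puts $F_a$ in H\'enon-like form, $C^{k_0}$-close to
\[(x,y)\;\mapsto\;\bigl(\mu_T(a)-x^2+\beta_T(a)\cdot y,\; \delta_T(a)\cdot x\bigr),\]
with $\delta_T(a)=O(|\lambda_s\lambda_u|^n)$ very small (strong dissipation) and $\mu_T(a)$ smooth in $a$; the quadratic term encodes the quadratic tangency. This family has a hyperbolic attracting fixed point near the origin for $\mu_T$ in an open interval. Choose a smooth bump $\rho_a$ supported in $B(H_a,\eta)$ and varying $C^\infty$ with $a\in V_\eta$ so that the perturbed map $f'_a$ (obtained by adding $\rho_a$ in a chart) produces a shifted value of $\mu_T(a)$ lying in the sink interval for every $a\in V$, while affecting $\delta_T(a)$ and $\beta_T(a)$ negligibly. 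Nondegeneracy of the quadratic tangency ensures that this shift is effective uniformly on the compact set $V$. The resulting perturbed $F_a$ has a hyperbolic attracting fixed point $A_a$, smoothly depending on $a$ and $C^\infty$-close to $H_a$; by construction $A_a$ is a sink of $f'_a$ of period $T\geq M$ whose orbit stays in the $\eta$-neighborhood of the finite chain $\{H^{-N_2}_a,\ldots,H_a,f_a(H_a),\ldots,f_a^{N_1+n}(H_a)\}\subset\mathcal O(H_a)$.

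\textbf{Main obstacle.} The most delicate point is achieving $C^\infty$-smoothness in $a$ of the normal form coordinates $\Phi_a$ despite possible resonances between $\lambda_s(a)$ and $\lambda_u(a)$ along the compact $V$. This is handled by using finite-regularity ($C^{k_0}$) normal forms that retain all resonant polynomial terms (which themselves vary smoothly in $a$); since $M$, and therefore $k_0$, is fixed in advance, this causes no loss of regularity in the final statement. A secondary technical verification is that the linear map from admissible bump perturbations supported in $B(H_a,\eta)$ to shifts in the effective parameter $\mu_T(a)$ is uniformly surjective onto an interval as $a$ ranges over $V$; this follows from the robustly quadratic (nondegenerate) nature of the tangency combined with compactness of $V$.
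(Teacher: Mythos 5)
Your proposal is essentially correct, but it takes a genuinely different route from the paper's. You renormalize: you build a high-period return map near $H_a$, put it (via parametrized finite-order normal forms at $P_a$ and affine rescaling) into strongly dissipative H\'enon-like form, and tune the effective parameter into the sink window by a bump in $B(H_a,\eta)$, uniformly and smoothly over the compact $V$ --- the classical Newhouse--Palis--Takens mechanism, parametrized. The paper avoids normal forms and renormalization entirely: in a chart where $\Gamma^u_a$ is horizontal and $W^s(P_a;f_a)$ is a tangent graph, it uses the parametric inclination lemma to produce a preimage curve $\Delta_a^{-n}$ of a short vertical segment through $H_a$, $C^\infty$-close as a family to the stable segment joining $P_a$ to $H_a$, and then performs a local surgery supported in $B(H_a,\eta)$ sending $Q'_a=f_a^n(Q_a)$ to $f_a(Q_a)$ with prescribed derivative; the closed-up $n$-periodic point has return derivative swapping two transverse lines with factors of order $\lambda_a^n$ and $\sigma_a^n$, so area contraction $|\lambda_a\sigma_a|<1$ gives the sink at once, and the $C^\infty$-closeness of $(A_a)_a$ to $(H_a)_a$ comes for free from the parametric inclination lemma. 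Your route yields more structure (an explicit H\'enon-like family and a whole window of sinks) at the cost of heavier machinery; note three caveats: your argument uses quadraticity of the tangency, which the statement does not assume (the paper's proof uses only the tangency, though quadraticity does hold where the proposition is applied); the asserted $C^\infty$-closeness in $a$ of $(A_a)_{a\in V}$ to $(H_a)_{a\in V}$ requires parametrized (not just pointwise) control of the rescaled return maps, which you should state explicitly; and since the $f_a$ are only local diffeomorphisms you should, as the paper does, fix coherent inverse branches and treat the case where $H_a$ is not in the component of $W^s(P_a;f_a)$ containing $P_a$, besides cutting off the bump in $a$ so that $f'_a=f_a$ for $a\notin V_\eta$.
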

\begin{proof}[Proof of Proposition \ref{sinkcreation2}]
 The set $\mathcal O(H_a)$ is discrete and has a unique accumulation point at $P_a$.  Hence $\eta=2 \cdot  d(H_a, \mathcal O(H_a)\setminus \{H_a\})$ is positive. 

Let $\lambda_a$ and $\sigma_a$ be respectively the stable and unstable eigenvalues of $P_a$. As $P_a$ is area contracting it holds:
\[|\lambda_a\sigma_a|<1\; .\]

Via a smooth family of charts, for every $a\in V$, we identify a neighborhood $N_a$ of $H_a$ with  $[-1,1]^2$ so that $H_a$ is identified to $0$,  a segment of $\Gamma^u_a\subset W^u(P_a; f_a)$ to $[-1,1]\times \{0\}$ and a segment of $W^s(P_a; f_a)$ to the graph of $x\in [-1,1]\mapsto \rho_a(x)$ with $\rho_a(0)=0$ and $D_0\rho_a=0$ for every $a\in V$. 

 We notice that for every $a\in V$, $H_a^{-k}$ is $C^\infty$-close to $P_a$ for $k\ge 1$ large.

	\begin{figure}[h]
    \centering
        \includegraphics[width=10cm]{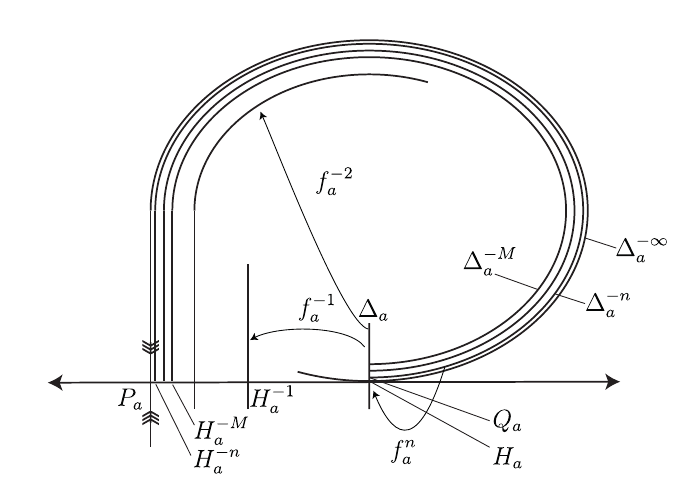}
    \caption{Preimages of $\Delta_a$.}
    \label{preimagedeltaa}
\end{figure}
As the map $f_a$ is in general not bijective, the stable manifold $W^s(P_a; f_a)$ is in general not connected.

Let us first assume that $H_a$ belongs to the component of $W^s(P_a; f_a)$ containing $P_a$.
Let $\Delta_a^{-\infty}$ be the segment of $W^s(P_a;f_a)$ with endpoints $P_a$ and $H_a$ (by assumption they belong to the same component of $W^s(P_a;f_a)$ ). 

Let $\Delta_a:= \{0\}\times [-\delta,\delta]$, with $\delta>0$ small.
By the inclination lemma, for $M$ large enough and $\delta>0$ small enough, for every $n\ge M$, the preimage of $\Delta_a$ by $f_a^n$  contains a segment $\Delta_a^{-n}$ bounded by $H_a^{-n}$ and $\Delta_a$, which is close to  $\Delta_a^{-\infty}$ (see figure \ref{preimagedeltaa}).  Actually, by the parametric inclinaison lemma,  the family of curves $(\Delta_a^{-n})_{a\in V}$ is $C^\infty$-close to $(\Delta_a^{-\infty})_{a\in V}$.

Let us fix $n\ge M$ large with respect to $M$. We observe that $\Delta^{-n}_a\cap [-1,1]^2$  is the graph of a function $x\in [0,1]\mapsto \hat \rho_a(x)$, with $(\hat \rho_a)_{a\in V}$ $C^\infty$-close to $(\rho_a)_{a\in V}$.

Let $Q_a$ be the unique endpoint of $\Delta_a^{-n}$ in $\Delta_a$.  We notice that  $(Q_a= (0,\hat \rho_a(0)))_{a\in V}$ is close to the constant family $(H_a=0= (0,\rho_a(0)))_{a\in V}$.

Let $Q'_a\in \Delta_a$ be the image of $Q_a$ by $f_a^n$. Note that $d(Q_a,Q'_a)<\delta<\!< \eta$. 
\begin{lemm}
For $n$ large, the family $(Q'_a)_{a\in V}$ is close to $ (H_a)_{a\in V}$.
\end{lemm}
\begin{proof}
We observe that $f_a|\Delta_a^{-\infty}$ is contracting with $P_a$ as unique fixed point. 

By identifying $\Delta^{-k}_a$ to $\Delta_a^{-\infty}$ for $k\ge M$ large (and so $H_a^{-k}$ to $P_a$),  the map  $f_a|\Delta_a^{-k}$ is contracting with $P_a$ as unique fixed point. 

As $(\Delta^{-k}_a)_{a\in V}$ is $C^\infty$-close to  $(\Delta^{-\infty}_a)_{a\in V}$ we have uniform bounds which enable us to prove that $(f^{n-M}_a(Q_a))_{a\in V}$ is $C^{\infty}$-close to $(H_a^{-M})_{a\in V}$ for $n$ large.  

By taking $n$ large compare to $M$, it comes that 
$(Q'_a)_{a\in V}= (f^{n}_a(Q_a))_{a\in V}$ is $C^{\infty}$-close to $(H_a)_{a\in V}$.\end{proof}

\begin{lemm}\label{LemmaQ}
The $n$-first iterates of $Q_a$ are in the $\eta$-neighborhood of $\mathcal O( H_a)$.   
\end{lemm}
\begin{proof}
By taking the notation of the previous proof, since $\delta>0$ is small with respect to  $\eta$, the $M$ first iterates of $Q_a$ are $\eta$-close to the $M$ first iterates of $H_a$. In the previous proof we saw that $\{f^k_a(Q_a): \; {n\ge k\ge M}\}$ is even closer to $\{ H^{-k}_a : \; 0\le k\le n-M\}$. 
\end{proof}

	\begin{figure}[h]
    \centering
        \includegraphics[width=6cm]{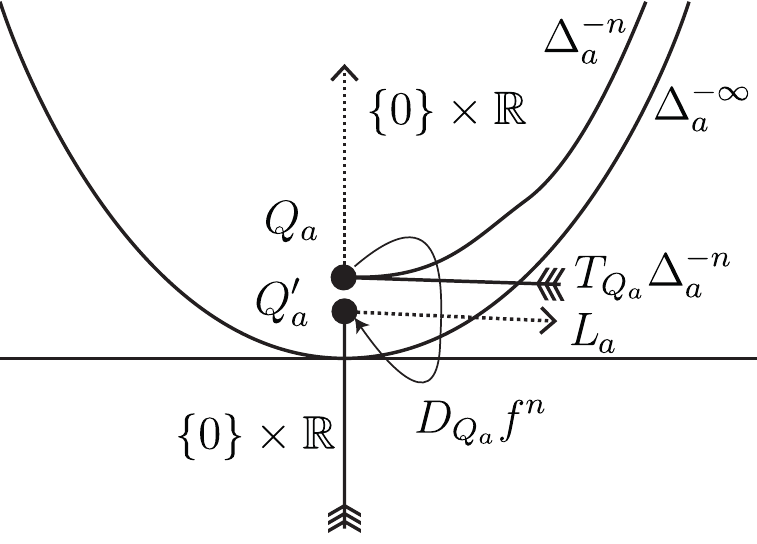}
    \caption{Notations involved.}
    \label{preuvemain}
\end{figure}
	
We recall that $f_a^n(Q_a)= Q'_a$ and $Df_a^n(T_{Q_a} \Delta^{-n}_a)= Q'_a+\{0\}\times \R$. Let $L_a \in \mathbb P \R^1$ be defined by $Q'_a+L_a:= Df_a^n(Q_a+\{0\}\times \R)$. By the inclination lemma, the lines family $(L_a)_{a\in V}$ is $C^\infty$- close to $(\R\times \{0\})_{a\in V}$. As $(Q_a)_{a\in V}$ is close to $(0)_{a\in V}$ and $(\Delta^{-n}_a)_{a\in V}$ is close to 
$(\Delta^{-\infty}_a)_{a\in V}$, the family $(T_{Q_a} \Delta^{-n}_a)_{a\in V}$ is close to $(T_{H_a} \Delta^{-\infty}_a)_a= (\R\times \{0\})_a$ and so to $(L_a)_{a\in V}$. 

Consequently, there exists a family $(f'_a)_a$ which is $C^\infty$-close to $(f_a)_a$ so that:
\begin{itemize}
\item $f'_a(Q_a')= f_a(Q_a)$ for every $a\in V$,
\item $Df'_a(Q_a'+L_a)= Df_a(T_{Q_a} \Delta^{-n}_a)$ for every $a\in V$,
\item $Df'_a(Q_a'+\{0\}\times \R)= Df_a(Q_a+\{0\}\times \R)$ for every $a\in V$.
\item $f_a'(z)=f_a(z)$ if $a\notin V_\eta$ or $z\notin B(H_a,\eta)$.
\end{itemize}

By Lemma \ref{LemmaQ}, the
 $n$-first $f_a$-iterates of $Q_a$ are included in the $\eta$ neighborhood of $\mathcal O(H_a)$.
Thus $\{f^k_a(Q_a): 1\le k\le n-1\}$  are $\eta$-distant to $H_a$. Thus it holds for every $a\in V$:
\[ D_{f'_a(Q'_a)} f'^{n-1}_a =   D_{f_a(Q_a)} f_a^{n-1}\]
Consequently the point  $Q'_a$ is $n$-periodic and $D_{Q'_a}f_a^n$ sends $L_a$ to $\{0\}\times \R$ with a contraction factor of the order of $\lambda_a^n$ and it sends $\{0\}\times \R$ to $L_a$ with an expansion factor of the order of $\sigma_a^n$. As $\lambda_a\sigma_a<1$, $D _{Q_a'}f_a^{2n}$ is a contracting homotety of factor of the order of $ (\lambda_a\sigma_a)^{2n}$. In particular $Q'_a$ is a sink of period $n\ge M$.      

If $H_a$ does not belong to the component of $W^s(P_a; f_a)$ containing $P_a$, then let $k\ge 0$ be minimal such that $f^k_a(H_a)$ belongs to the connected component of $W^s(P_a; f_a)$. By $f_a$-invariance of $W^u(P_a; f_a)$, this immersed submanifold is still tangent to $W^s(P_a; f_a)$ at $f^k_a(H_a)$. Hence by the above argument, for small perturbation of $W^u(P_a; f_a)$  around  $f^k_a(H_a)$ there is a persistent homoclinic tangency. The map $f_a^k$ being a diffeomorphism on a neighborhood of $H_a$, we can make this perturbation supported by a small neighborhood of $H_a$.

\end{proof}

\section{Proof that  Proposition \ref{propfonda3} implies  main Theorem \ref{main}}
\label{propfonda3main}

Let us assume that as in Proposition \ref{propfonda3}, there is a dense set in ${\mathcal U}^{d,\infty}$ formed by families  $(f_a)_a\in {\mathcal U}^\infty$ which satisfies the following property:

There exists a finite open covering $(U_i)_i$ of  $\I^k$ so that for every $i$ there exist a periodic, projectively hyperbolic  source $(S_{ia})_{a\in U_i}$ and a continuous family of embedded segments $(\Gamma^{u}_{ia})_a$ of $(W^u(P_a;f_a))_a$ such that:
\begin{enumerate}[$(i)$]
\item $S_{ia}$ is in $\Gamma_{ia}^{u}\subset W^u(P_a; f_a)$, for every $a\in U_i$.
\item $T_{S_{ia}} \Gamma_{ia}^{u}$ is not the weak unstable direction of $S_{ia}$,  for every $a\in U_i$.       
\item There exists a (smooth) family $(H_{ia})_{a\in U_i}$ of points  in $(W^s(P_a; f_a))_{a\in V_i}$ at which $W^s(P_a; f_a)$ has a quadratic tangency with the strong unstable foliation of $S_{ia}$,  for every $a\in U_i$.
\item For every $a\in U_i\cap U_j$ with $i\not= j$, the sets $(f^k_a(H_{ia}))_{k\ge 0}$ and   $(f^k_a(H_{ja}))_{k\ge 0}$ are disjoint and the finite orbits  $(f^k_a(S_{ia}))_{k\ge 0}$ and $(f^k_a(S_{ja}))_{k\ge 0}$ are disjoint. 
\end{enumerate}

We want to show that under these assumptions for every $M>0$, each of these families can be perturbed -- following the algorithm described in the sketch of proof --  to one which displays a sink of period at least $M$ for every $a\in \I^k$. 

In order to do so, we need to handle independently different perturbations. Hence we need to find some independent rooms in the phase space to do so. 

\medskip

For every $i$ and $a\in U_i$, we define the following sets of forward and backward orbits.

Put $\mathcal O^+(H_{ia}):= \{f^k_a(H_{ia}): k\ge 0\}$ and $\mathcal O^+(S_{ia}):= \{f^k_a(S_{ia}): k\ge 0\}$.  

Let $\mathcal O^{-}(H_{ia})$ be the preorbit of $H_{ia}$ defined thanks to the inverse branches defining the strong unstable foliation of $S_{ia}$.  We notice that $\mathcal O^{-}(H_{ia})$ accumulates on $\mathcal O^+(S_{ia})$. 
 We put: 
$$\mathcal O(H_{ia}):= \mathcal O^{+}(H_{ia})\cup \mathcal O^{-}(H_{ia}).$$

Also the segment $\Gamma_{ia}^{u}$ of $W^u(P_a; f_a)$  is defined by a sequence of inverse branches of $f_a$.  Let  $\mathcal O^{-}(S_{ia}):= (S^{(k)}_{ia})_{k\le -1}$ be the preobit of $S_{ia}$ associated to this sequence of inverse branches. We notice that $\mathcal O^{-}(S_{ia})$ is discrete with $P_a$ as unique accumulation point.  We put: 
$$\mathcal O(S_{ia}):= \mathcal O^{+}(S_{ia})\cup \mathcal O^{-}(S_{ia}).$$ 

\begin{fact}We can  assume that $\mathcal O^{+}(S_{ia})$ is disjoint from $\mathcal O^{-}(S_{ia})$.
\end{fact}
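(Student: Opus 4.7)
The plan is to show that the disjointness $\mathcal O^+(S_{ia})\cap \mathcal O^-(S_{ia})=\varnothing$ can be arranged by an arbitrarily small $C^{d,\infty}$-perturbation of $(f_a)_a$, staying inside the dense subset of ${\mathcal U}^{d,\infty}$ provided by Proposition \ref{propfonda3} (so that properties $(i)$--$(iv)$ and $(H_0)$--$(H_4)$ persist). First I would observe that $\mathcal O^+(S_{ia})$ is a \emph{finite} set (the periodic cycle of the source $S_{ia}$ of some period $p_{ia}$), while $\mathcal O^-(S_{ia})$ is a discrete sequence whose only accumulation point is the saddle $P_a$. Since $P_a\neq f_a^j(S_{ia})$ for every $j$ (saddle versus source), there is a neighborhood $N_a$ of $P_a$, depending continuously on $a\in\overline{U_i}$, disjoint from $\mathcal O^+(S_{ia})$ and containing all but finitely many points of $\mathcal O^-(S_{ia})$. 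Hence a possible intersection is confined to finitely many candidate pairs of indices $(j,k)$ with $0\le j<p_{ia}$ and $|k|$ bounded, uniformly on any compact piece of $U_i$.

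Next I would break each coincidence $f_a^j(S_{ia})=S^{(k)}_{ia}$ by a local $C^\infty$-perturbation of $f_a$. A convenient place to act is near $f_a^{j-1}(S_{ia})$: a small translation of $f_a$ localized in a ball around that point perturbs the forward cycle of $S_{ia}$ without affecting the preorbit $\mathcal O^-(S_{ia})$, because the latter is defined by inverse branches along $\Gamma^u_{ia}$ that pass through a different region of phase space (the segment of $W^u(P_a;f_a)$ between $P_a$ and $S_{ia}$). One chooses the support of the perturbation to be disjoint from the blender $K_a$, the saddle $P_a$, the tangency point $H_{ia}$, the source $S_{ia}$ itself, and the orbits of the other indices $j'\neq i$ on their respective $U_{j'}$ (this is possible by $(iv)$ of Proposition \ref{propfonda3}, applied on the finite cover). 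Genericity of the perturbation immediately breaks the finitely many coincidences, and openness keeps the disjointness on a neighborhood of each $a$.

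To make this parametric, I would cover the compact closure $\overline{U_i}$ by finitely many open sets on which the finite list of candidate coincidence indices is constant, and on each of them multiply the phase-space translation by a smooth bump function in the parameter $a$. Summing the resulting finitely many elementary perturbations yields a single smooth family $(f_a')_a$, arbitrarily $C^{d,\infty}$-close to $(f_a)_a$, in which $\mathcal O^+(S_{ia})\cap \mathcal O^-(S_{ia})=\varnothing$ for every $a\in U_i$ and every $i$.

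The main obstacle I expect is bookkeeping: guaranteeing that the localized perturbations preserve simultaneously all the structures we have already established -- the blender and its parablender property with covered domain containing $J^d_{a_0}(S_{ia})_a$, the projective hyperbolicity of $S_{ia}$, the persistent tangency at $H_{ia}$, and the disjointness of the distinguished orbits across indices $i$. All of these are verified by taking the perturbation supports small enough and disjoint from fixed open neighborhoods of the relevant distinguished sets, and this is where the openness of the hypotheses $(H_0)$--$(H_4)$ and of conditions $(i)$--$(iv)$ under $C^{d,d}_A$-perturbation is crucial.
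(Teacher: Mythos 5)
Your route is genuinely different from the paper's, and it has a real gap. You propose to break the coincidences $f_a^j(S_{ia})=S^{(k)}_{ia}$ by a localized perturbation of the map near $f_a^{j-1}(S_{ia})$ and then argue that "openness" preserves everything. But the properties you must preserve are not all open. Property $(i)$ of Proposition \ref{propfonda3} is an exact coincidence: the periodic source lies \emph{on} $\Gamma^u_{ia}\subset W^u(P_a;f_a)$ for \emph{every} $a\in U_i$. A translation of $f_a$ supported near a point of the periodic cycle moves the hyperbolic continuation of $S_{ia}$, and generically moves it off $W^u(P_a;f'_a)$; restoring this containment on a whole parameter set is precisely what the parablender mechanism ($H_3$) is for, so you would have to re-run the main argument inside this step, which is circular. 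The claim that the preorbit $\mathcal O^-(S_{ia})$ is "unaffected" is also unjustified: the offending coincidence is a point shared by the cycle and the preorbit, and after the perturbation the preorbit is recomputed from the displaced source, so both distinguished sets move. The same fragility applies to $(iii)$ (the persistent quadratic tangency with $\mathcal F^{uu}(S_{ia})$), which is tied to the location of $S_{ia}$.

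The paper avoids all of this: no perturbation is made. Since $S_{ia}$ is periodic, $\mathcal O^+(S_{ia})$ is finite and $f_a$-invariant, while $\mathcal O^-(S_{ia})$ is infinite and accumulates only on $P_a\notin\mathcal O^+(S_{ia})$; hence the preorbit is not contained in the cycle. If $S^{(-1)}_{ia}\in\mathcal O^+(S_{ia})$, one simply replaces the marked point $S_{ia}$ by $S^{(-1)}_{ia}$ (a point of the same periodic orbit, still a projectively hyperbolic source) and $\Gamma^u_{ia}$ by its preimage $\Gamma'^u_{ia}\subset W^u(P_a;f_a)$; properties $(i)$--$(iv)$ are inherited verbatim. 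After finitely many such shifts $S^{(-1)}_{ia}\notin\mathcal O^+(S_{ia})$, and then forward invariance of the cycle forces the whole preorbit to be disjoint from it: if some $S^{(-k)}_{ia}$ were in the cycle, so would its forward image $S^{(-1)}_{ia}$. You should adopt this relabeling argument; it is shorter and, unlike the perturbative one, does not threaten the non-open coincidences on which the whole construction rests.
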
 
\begin{proof}Indeed,  $\mathcal O^{+}(S_{ia})$ is finite since $S_{ia}$ is periodic thus  $\mathcal O^{-}(S_{ia})$ is not contained in $\mathcal O^{+}(S_{ia})$.  If the preimage $S_{ia}^{(-1)}$ of $S_{ia}$ is in $\mathcal O^{+}(S_{ia})$, then properties $(i-ii-iii-iv)$ are still satisfied by $S_{ia}^{(-1)}$ and the preimage $\Gamma_{ia}'^{u}$ of $\Gamma_{ia}^{u}$.
Consequently, 
by shifting the orbit, we can assume that $S_{ia}^{(-1)}$ is not in $\mathcal O^{+}(S_{ia})$, and so $\mathcal O^{+}(S_{ia})$ is disjoint from $\mathcal O^{-}(S_{ia})$.\end{proof}

By $(iv)$, for every $i\not= j$ such that $a\in U_i\cap U_j$,  the sets $\mathcal O^+(S_{ia})$ and $\mathcal O^+(S_{ja})$ are disjoint. This implies that the sets $\mathcal O(S_{ia})$ and $\mathcal O(S_{ja})$ are disjoint. 

We notice also that $\mathcal O(S_{ia})$ is disjoint from $\mathcal O(H_{ja})$, for every $i,j$ such that $a\in U_i\cap U_j$ (otherwise the source would converge to $P_a$). Consequently:
\begin{fact}
For every $a$ and all $i\not= j$ such that $a\in U_i\cap U_j$, 
the point $S^{(-1)}_{ia}$ is disjoint from the set $\mathcal O(S_{ja})\cup \mathcal O (H_{ja})\cup \mathcal O (H_{ia})\cup \{P_a\}$. 
\end{fact}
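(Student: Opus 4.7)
The plan is to check the four disjointness conditions one by one; each will be an immediate consequence of observations already made in the preceding paragraphs. I first note that $S^{(-1)}_{ia}\in \mathcal O^-(S_{ia})\subset \mathcal O(S_{ia})$, so the first three conditions reduce to showing that $\mathcal O(S_{ia})$ is disjoint from $\mathcal O(S_{ja})$, from $\mathcal O(H_{ja})$, and from $\mathcal O(H_{ia})$.

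All three are already asserted in the paragraph just above the fact: $\mathcal O(S_{ia})\cap \mathcal O(S_{ja})=\varnothing$ for $i\ne j$ follows from $(iv)$ together with the definitions of the backward parts of the orbits, while $\mathcal O(S_{ia})\cap \mathcal O(H_{ja})=\varnothing$ is asserted for all $i,j$ with $a\in U_i\cap U_j$ (in particular for $j=i$, which yields the $\mathcal O(H_{ia})$ case). The underlying reason in the latter is that any common point would forward-iterate under $f_a$ to a sequence simultaneously cycling through the finite periodic orbit of $S_{ia}$ and accumulating at $P_a$, which is impossible since $P_a\notin \mathcal O^+(S_{ia})$. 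Finally, $S^{(-1)}_{ia}\ne P_a$: otherwise $S_{ia}=f_a(P_a)=P_a$ since $P_a$ is fixed, contradicting that $P_a$ is a saddle while $S_{ia}$ is a source.

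The only mild subtlety is that the $\mathcal O^-$'s are defined via specific inverse branches of $f_a$ rather than a global $f_a^{-1}$. To convert a hypothetical collision of two preorbits into a collision of forward orbits (which is what the periodicity-versus-convergence argument actually rules out), one should use that near $S_{ia}$ both $\mathcal O^-(S_{ia})$ and $\mathcal O^-(H_{ia})$ are traced by the same local inverse branch of $f_a$, namely the unique one fixing the source, so that $f_a$ consistently undoes those preimages and any common preimage may be pushed forward by a suitable iterate of $f_a$ into a common forward-orbit point. Once that reduction is made, the contradiction above closes the argument.
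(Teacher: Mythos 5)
Your proof is correct and takes essentially the paper's route: the Fact is just the conjunction of the two disjointness remarks stated immediately before it (disjointness of $\mathcal O(S_{ia})$ from $\mathcal O(S_{ja})$ via $(iv)$, and from $\mathcal O(H_{ja})$ for all $i,j$ via the "source would converge to $P_a$" contradiction), together with $S^{(-1)}_{ia}\in\mathcal O^-(S_{ia})\subset\mathcal O(S_{ia})$ and $S^{(-1)}_{ia}\neq P_a$. One small caveat: your claim that $\mathcal O^-(S_{ia})$ and $\mathcal O^-(H_{ia})$ are traced near $S_{ia}$ by the same inverse branch (the one fixing the source) is inaccurate --- $\mathcal O^-(S_{ia})$ uses the branches defining $\Gamma^u_{ia}$ and accumulates at $P_a$, while $\mathcal O^-(H_{ia})$ uses those defining $\mathcal F^{uu}(S_{ia})$ --- but this is also unnecessary, since by the very definition of a preorbit every element forward-iterates under some $f_a^m$ onto its base point, which is all your push-forward reduction requires.
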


Note also that $\mathcal O(S_{ia})$ is discrete with a unique accumulation point at $P_{a}$ and  $\mathcal O(H_{ia})$ is discrete with  accumulation points $P_{a}$ and the finite set $\mathcal O^+(S_{ia})$. Hence the set $\mathcal O(S_{ja})\cup \mathcal O(S_{ia})\cup  \mathcal O (H_{ja})\cup \mathcal O (H_{ia})\cup \{P_a\}$ is compact and $S^{(-1)}_{ia}$ is isolated therein. 

 As these compact sets depend continuously on $a$, by shrinking slightly the covering $(U_i)_i$,  we obtain:
\begin{lemm}
There exists $\delta>0$ so that for every $i\not = j$ and $a\in U_i\cap U_j$, the point $S^{(-1)}_{ia}$ is at distance at least $3\delta>0 $ to $\mathcal O(S_{ja})\cup (\mathcal O(S_{ia})\setminus \{S^{(-1)}_{ia}\})\cup \mathcal O (H_{ja})\cup \mathcal O (H_{ia})$. 
\end{lemm}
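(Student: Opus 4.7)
The plan is to upgrade the pointwise positivity supplied by the preceding fact to a uniform bound, via a compactness argument after slightly shrinking the cover $(U_i)_i$.

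First I would use compactness of $\I^k$ and finiteness of the cover to choose open subsets $U'_i$ with $\overline{U'_i}\subset U_i$ which still cover $\I^k$. For every pair $i\not= j$ the set $K_{ij}:=\overline{U'_i\cap U'_j}$ is then a compact subset of $U_i\cap U_j$, and only finitely many such pairs are non-empty. It suffices to prove the uniform lower bound on each $K_{ij}$ and then relabel $U'_i$ as $U_i$ in the statement.

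Fix such a pair $(i,j)$ and denote by $E_{ij}(a)$ the set appearing in the statement. Although $E_{ij}(a)$ is infinite, its points of large index accumulate on the compact set $A(a):=\{P_a\}\cup \mathcal O^+(S_{ia})\cup \mathcal O^+(S_{ja})$: the preorbits $\mathcal O^-(S_{ia}), \mathcal O^-(S_{ja})$ accumulate on $P_a$; the forward orbits $\mathcal O^+(H_{ia}), \mathcal O^+(H_{ja})$ converge to $P_a$ because those points lie on $W^s(P_a;f_a)$; and the preorbits $\mathcal O^-(H_{ia}), \mathcal O^-(H_{ja})$ accumulate on the corresponding periodic source orbits. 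By hyperbolic continuation of $P_a, S_{ia}, S_{ja}$ on the compact set $K_{ij}$, all these rates of approach are uniformly exponential in $a\in K_{ij}$. Since the preceding fact gives $d(S^{(-1)}_{ia}, A(a))>0$ for each $a$, and both $a\mapsto S^{(-1)}_{ia}$ and the finite family $a\mapsto A(a)$ depend continuously on $a\in K_{ij}$, this pointwise distance is bounded below uniformly by some $4\eta>0$. Choosing $N$ large enough that every point of $E_{ij}(a)$ of index $|k|\ge N$ lies within $\eta$ of $A(a)$ (uniformly in $a\in K_{ij}$) then yields a uniform bound $\ge 3\eta$ for the distance from $S^{(-1)}_{ia}$ to the tail of $E_{ij}(a)$.

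The remaining head of $E_{ij}(a)$ consists of only finitely many points, each depending continuously on $a\in K_{ij}$ by hyperbolic continuation and smoothness of the families $(H_{ia})_a$, and none coinciding with $S^{(-1)}_{ia}$ by the preceding fact. Hence the distance from $S^{(-1)}_{ia}$ to this head is a strictly positive continuous function on the compact set $K_{ij}$, so it is bounded below. Taking $3\delta$ to be the minimum, over the finitely many pairs $(i,j)$, of this head bound together with $3\eta$ produces the desired uniform constant. The one delicate point is the control of the infinite tail; once its uniform closeness to $A(a)$ is isolated via uniform exponential contraction at $P_a$ on $K_{ij}$, the rest is a routine compactness argument.
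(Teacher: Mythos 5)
Your proposal is correct and is essentially the paper's own argument unpacked: the paper compresses it into the observations that each set $\mathcal O(\cdot)$ together with its accumulation points ($P_a$ and the finite source orbits) is compact, that $S^{(-1)}_{ia}$ is isolated therein by the preceding facts, and that these compact sets depend continuously on $a$, so a slight shrinking of the covering yields a uniform $3\delta$ — exactly your head/tail decomposition with the compactness argument made explicit. The only slight looseness is attributing the distinctness of $S^{(-1)}_{ia}$ from the other points of $\mathcal O(S_{ia})$ to "the preceding fact": for points of $\mathcal O^+(S_{ia})$ and for possible coincidences $S^{(-k)}_{ia}=S^{(-1)}_{ia}$ ($k\ge 2$, which would force $S^{(-1)}_{ia}\in\mathcal O^+(S_{ia})$) you need the earlier fact that $\mathcal O^+(S_{ia})$ and $\mathcal O^-(S_{ia})$ are disjoint, a one-line addition.
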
 

Let $(U'_i)_i$ be an open, relatively compact covering of $\I^k$ such that $cl(U'_i)\subset U_i,$ $\forall i$.  

By shrinking $\Gamma^{u}_{ia}$, we can assume its preimage $\Gamma^{u(-1)}_{ia}$ included in the $\delta/2$-neighborhood of $S^{(-1)}_{ia}$ for every $i$ and $a\in U_i$. By Proposition \ref{tangencycreation}, we can perturb $\Gamma^{u}_{ia}$ to a curve $\tilde \Gamma^{u}_{ia}$ which is tangent to $W^s(P_a; f_a)$ at a point $\tilde H_{ia}$.

By Remark \ref{rematechnique}, the forward orbit of $\mathcal O^+(\tilde H_{ia}):=\{f^k_a(\tilde H_{ia}): k\ge 0\}$ is included in the $\delta$-neighborhood of  $\mathcal O(H_{ia})$.

 As $S^{(-1)}_{ia}$ is $3\delta$-distant from $\mathcal O(H_{ia})\cup (\mathcal O(S_{ia})\setminus \{S^{(-1)}_{ia}\})$ we can handle so that:
 \begin{itemize}
 \item $f_{ia}(z) = f_a(z)$ if $a\notin U_i$ or $z\notin B(S^{(-1)}_{ia},\delta)$
 \item for every $a\in U'_i$, the curve
$\tilde \Gamma_{ia}^{u}$ is the hyperbolic continuation of  $\Gamma_{ia}^{u}$ for $f_{ia}$. 
\end{itemize}
This proves:

\begin{lemm}
For every $i$ there exists a smooth perturbation $(f_{ia})_{a\in U_i}$ of  $(f_a)_{a\in U_i}$ so that:
\begin{itemize}
\item  the hyperbolic continuation of $\tilde\Gamma_{ia}^{u}$ of $\Gamma_{ia}^{u}$ has a persistent homoclinic quadratic tangency with $W^s(P_a;f_{ia})$ at a point $\tilde H_{ia}\in \tilde \Gamma_{ia}^{u}\cap B(S_{ia}, \delta)$ and with forward orbit $\mathcal O^+(\tilde H_{ia})$ in the $\delta$-neighborhood of  $\mathcal O(H_{ia})$, for every $a\in U_i'$. 
\item $f_{ia}(z)=f_a(z)$ for every $z\notin B(S^{(-1)}_{ia},\delta)$ and $a\in U_i$. 
\end{itemize}
\end{lemm}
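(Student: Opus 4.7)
The plan is to apply Proposition~\ref{tangencycreation} (extended to periodic sources via Remark~\ref{tangencycreation2}) independently on each $U_i$, with $W_a := \Gamma_{ia}^{u}$ and $C_a$ a segment of $W^s(P_a; f_a)$ through $H_{ia}$. Hypotheses $(i)$ and $(ii)$ of Proposition~\ref{propfonda3} ensure that $W_a$ contains $S_{ia}$ and is not tangent to its weak unstable direction, while $(iii)$ says that $C_a$ has a quadratic tangency with $\mathcal{F}^{uu}(S_{ia})$ at $H_{ia}$. The proposition thus yields an integer $n \ge 0$ and a $C^\infty$-small perturbation $\tilde \Gamma_{ia}^{u}$ of $\Gamma_{ia}^{u}$ so that $f_a^n(\tilde \Gamma_{ia}^u)$ (equivalently, $\tilde \Gamma_{ia}^u$) has a quadratic tangency with a forward iterate of $C_a$, hence with $W^s(P_a; f_a)$ at a point $\tilde H_{ia}$ close to $S_{ia}$.

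Next I localize the perturbation in $B(S^{(-1)}_{ia},\delta)$. By construction, $\Gamma_{ia}^u = f_a(\Gamma_{ia}^{u(-1)})$ with $\Gamma_{ia}^{u(-1)} \subset B(S^{(-1)}_{ia},\delta/2)$. From the proof of Proposition~\ref{tangencycreation}, the perturbation is realized by translating $\Gamma^u_{ia}$ by a small vector $z_a^n$ multiplied by a parameter bump. I implement this by setting
\[
f_{ia}(z) := f_a(z) + \rho(z)\, \eta(a)\, z_a^n,
\]
where $\rho$ is a spatial bump supported in $B(S^{(-1)}_{ia},\delta)$ and equal to $1$ on $B(S^{(-1)}_{ia},\delta/2)$, and $\eta$ is a parameter bump with $\supp \eta \subset U_i$ and $\eta \equiv 1$ on $U'_i$. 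Then $f_{ia} = f_a$ outside $B(S^{(-1)}_{ia},\delta)$, and the hyperbolic continuation under $f_{ia}$ of the piece of $W^u(P_a; f_a)$ obtained by applying $f_a$ to $\Gamma_{ia}^{u(-1)}$ is exactly the translated curve $\tilde \Gamma_{ia}^u$ for every $a \in U'_i$. Since the remaining inverse branches defining $\Gamma_{ia}^u$ are not affected (the perturbation is disjoint from $P_a$ and from $\mathcal{O}(S_{ia}) \setminus \{S^{(-1)}_{ia}\}$ by the $3\delta$-separation lemma), this hyperbolic continuation is well defined and inherits the quadratic tangency with $W^s(P_a; f_{ia})$ at $\tilde H_{ia} \in B(S_{ia},\delta)$.

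Finally, to control the forward orbit, I invoke Remark~\ref{rematechnique}: the pre-images $(f_a^{-k}(\tilde H^n_a))_{0 \le k \le n}$ realizing the tangency are $C^0$-close to $(f_a^{-k}(H^{n}_a))_{0 \le k \le n}$, so $\mathcal{O}^+(\tilde H_{ia})$ lies in the $\delta$-neighborhood of $\mathcal{O}^+(H_{ia}) \cup \mathcal{O}^-(S_{ia}) \subset \mathcal{O}(H_{ia})$. Taking $\delta$ small enough (already guaranteed by the separation lemma) ensures that this forward orbit avoids $B(S^{(-1)}_{ia},\delta)$ in its initial segment, so the $f_{ia}$-dynamics of $\tilde H_{ia}$ agrees with the $f_a$-dynamics on that segment.

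The main technical point, which is the only non-routine part of the argument, is the passage from the ``translate $W_a$ by $z_a^n$'' formulation of Proposition~\ref{tangencycreation} to a localized perturbation of the \emph{map} $f_a$ supported in $B(S^{(-1)}_{ia}, \delta)$; this is what the separation $3\delta$ secured in the preceding lemma buys us, and it guarantees that all four perturbation data can be installed simultaneously (later, summed over $i$) without interference between the different $U_i$.
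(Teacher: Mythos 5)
Your proposal follows essentially the paper's own route: apply Proposition \ref{tangencycreation} (via Remark \ref{tangencycreation2}, since $S_{ia}$ is periodic) to $W_a=\Gamma^u_{ia}$ and a stable segment $C_a\subset W^s(P_a;f_a)$ through $H_{ia}$, control the orbit of the new tangency point by Remark \ref{rematechnique}, and use the $3\delta$-separation to convert the translation of the curve into a perturbation of the map supported in $B(S^{(-1)}_{ia},\delta)$ for $a\in U_i$; your explicit bump-function formula $f_{ia}=f_a+\rho\,\eta\, z^n_a$ is just a concrete realization of the step the paper states as ``we can handle so that''.

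One correction in the orbit-control step: the backward set that the initial segment of the forward orbit of $\tilde H_{ia}$ shadows is $\mathcal O^-(H_{ia})$ (the preimages of $H_{ia}$ along the inverse branches defining $\mathcal F^{uu}(S_{ia})$, which is how the curves $C^n_a$ in the proof of Proposition \ref{tangencycreation} are produced), not $\mathcal O^-(S_{ia})$. Your inclusion $\mathcal O^+(H_{ia})\cup\mathcal O^-(S_{ia})\subset\mathcal O(H_{ia})$ is false: the paper notes that $\mathcal O(S_{ia})$ is disjoint from $\mathcal O(H_{ia})$, and $S^{(-1)}_{ia}\in\mathcal O^-(S_{ia})$, so if the forward orbit of $\tilde H_{ia}$ really passed $\delta$-close to $\mathcal O^-(S_{ia})$ it would re-enter the support $B(S^{(-1)}_{ia},\delta)$ and your localization argument would break down. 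Read correctly, Remark \ref{rematechnique} gives that $(f^k_a(\tilde H^n_a))_{k=n}^0$ is close to $(f^k_a(H_{ia}))_{k=n}^0$, i.e.\ to points of $\mathcal O^-(H_{ia})$, after which the orbit shadows $\mathcal O^+(H_{ia})$; hence $\mathcal O^+(\tilde H_{ia})$ lies in the $\delta$-neighborhood of $\mathcal O(H_{ia})$, which is $3\delta$-distant from $S^{(-1)}_{ia}$, and the rest of your argument then goes through exactly as in the paper.
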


We define $\mathcal O^-(\tilde H_{ia})$ by different  branches from those  defining $\mathcal O^-(H_{ia})$: we consider the inverse branches of the dynamics defining $\tilde \Gamma^{iu}_a$.  This defines a sequence of preimages $(\tilde H_{ia}^{(k)})_{k\le 0}$ so that  $\tilde H_{ia}^{(k)}$ is close to 
$S_{ia}^{(k)}$ for every $k\le -1$.

We put $\mathcal O(\tilde H_{ia}):= \mathcal O^-(\tilde H_{ia})\cup \mathcal O^+(\tilde H_{ia})$. It is a discrete set with a unique accumulation point $P_a$. Also we notice that for every $i\not =j$ and $a\in U'_i\cap U'_j$:
\begin{itemize}
\item the $\delta$-neighborhood of $\mathcal O(\tilde H_{ja})$ is disjoint from $B(S^{(-1)}_{ia},\delta)$,
\item  $B(S^{(-1)}_{ia},\delta)$ contains 
$ \tilde H^{(-1)}_{ia}$ and is disjoint from the $\delta$-neighborhood of $\mathcal O(\tilde H_{ia})\setminus \{\tilde H^{(-1)}_{ia}\}$.
\end{itemize}

Then Proposition \ref{sinkcreation2} implies:

\begin{lemm}
For every $M\ge 0$, for every $i$ there exists a smooth perturbation $(f_{ia})_{a\in U_i}$ of  $(f_a)_{a\in U_i}$ so that:
\begin{itemize}
\item  $(f_{ia})_a$ has a sinks $A_{ia}$ of period $\ge M$ with orbit in $B(\mathcal O(H_{ia}),\delta)$, for every $a\in U_i'$. 
\item $f_{ia}(z)=f_a(z)$ for every $z\notin B(S^{(-1)}_{ia},\delta)$ and $a\in U_i$. 
\end{itemize}
\end{lemm}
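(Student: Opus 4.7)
The plan is to apply Proposition~\ref{sinkcreation2} directly to the perturbed family $(f_{ia})_{a\in U_i}$ supplied by the previous lemma. That family has a saddle $P_a$ which remains area-contracting -- the inequality $|\lambda_a\sigma_a|<1$ is an open condition and the preceding perturbation is $C^\infty$-small -- and a persistent homoclinic quadratic tangency at $\tilde H_{ia}$ for every $a$ in the open set $U_i'$. Replacing $U_i'$ by a relatively compact open subset $V_i\Subset U_i'$ furnishes the compact parameter set required by Proposition~\ref{sinkcreation2}.

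Next, I would choose $\eta\in(0,\delta)$ small enough that the $\eta$-neighborhood of $\mathcal O(\tilde H_{ia})$ is contained in $B(\mathcal O(H_{ia}),\delta)$ for every $a\in V_i$. For the forward orbit this is immediate from the previous lemma, which already gave $\mathcal O^+(\tilde H_{ia})\subset B(\mathcal O(H_{ia}),\delta)$. For the backward preorbits $\mathcal O^-(\tilde H_{ia})$ and $\mathcal O^-(H_{ia})$ -- both defined by inverse branches and accumulating on $P_a$ -- uniform continuity on the compact $V_i$, together with the $C^\infty$-smallness of the prior perturbation, keeps corresponding preimages uniformly close.

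Now apply Proposition~\ref{sinkcreation2} with $V:=V_i$, this $\eta$, and the given $M$. This yields a smooth perturbation, which we rename $(f_{ia})_a$, coinciding with the earlier $f_{ia}$ outside the ball $B(\tilde H_{ia},\eta)$ in phase space (and outside $(V_i)_\eta$ in parameter space), and producing a sink $A_{ia}$ of period at least $M$ whose orbit lies in the $\eta$-neighborhood of $\mathcal O(\tilde H_{ia})$, hence in $B(\mathcal O(H_{ia}),\delta)$, as required by the first bullet of the lemma.

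As for the support statement, the cumulative perturbation of the original $f_a$ is supported in $B(S^{(-1)}_{ia},\delta)\cup B(\tilde H_{ia},\eta)$, both pieces sitting inside a $2\delta$-neighborhood of the finite orbit $\mathcal O(S_{ia})$. The essential property for the subsequent patching across indices is disjointness of these supports for $i\neq j$ on the overlap $U_i'\cap U_j'$, and this is exactly what the $3\delta$-spacing established earlier guarantees. The main obstacle is the uniform-closeness step above, namely controlling the two backward preorbits near the accumulation point $P_a$; it is handled by the hyperbolicity of the inverse branch at $P_a$ together with a parametric argument in the spirit of the inclination lemma.
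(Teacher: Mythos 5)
Your plan of ``apply Proposition~\ref{sinkcreation2} to the tangency produced by the previous lemma'' is the right general idea, but you apply it at the wrong point of the homoclinic orbit, and this breaks the second bullet of the lemma, which is not optional. The lemma demands that the new perturbation be supported in $B(S^{(-1)}_{ia},\delta)$ \emph{only}; the paper gets this by performing the sink-creating perturbation of Proposition~\ref{sinkcreation2} at the preimage tangency point $\tilde H^{(-1)}_{ia}$, which lies in $B(S^{(-1)}_{ia},\delta)$ and is $\delta$-isolated from the rest of $\mathcal O(\tilde H_{ia})$ --- this is precisely what the two bullets stated just before the lemma are set up for. You instead perturb in $B(\tilde H_{ia},\eta)$, a ball sitting near the source $S_{ia}$, hence disjoint from $B(S^{(-1)}_{ia},\delta)$, and then argue that the enlarged support is harmless because of the ``$3\delta$-spacing''. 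But the $3\delta$-separation was established only for the single point $S^{(-1)}_{ia}$ against $\mathcal O(S_{ja})\cup(\mathcal O(S_{ia})\setminus\{S^{(-1)}_{ia}\})\cup\mathcal O(H_{ja})\cup\mathcal O(H_{ia})$; no analogous separation holds near the sources themselves: the replicated sources $S_{ia}$, $S_{ja}$ all lie close to the original $S_a$, and the sets $\mathcal O(H_{ja})$ accumulate on the source orbits, so a ball around $\tilde H_{ia}$ may meet the tangency orbit, the sink orbit, or the perturbation region of another index $j$. The patching step of the paper glues the $f_{ia}$ precisely over the pairwise disjoint sets $\{(a,z): z\in B(S^{(-1)}_{ia},\delta)\}$, so with your weakened support the final argument cannot be carried out with the estimates available.

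Two further points. First, your containment ``$\eta$-neighborhood of $\mathcal O(\tilde H_{ia})\subset B(\mathcal O(H_{ia}),\delta)$'' is justified by claiming $\mathcal O^-(\tilde H_{ia})$ stays close to $\mathcal O^-(H_{ia})$ by uniform continuity; this misreads the construction. The preorbit $\mathcal O^-(\tilde H_{ia})$ is taken along the inverse branches defining $\tilde\Gamma^{u}_{ia}$, so it shadows $\mathcal O^-(S_{ia})$ and accumulates at $P_a$, whereas $\mathcal O^-(H_{ia})$ is taken along the branches of the strong unstable foliation and accumulates on the orbit of $S_{ia}$; these are genuinely far apart (e.g.\ $\tilde H^{(-1)}_{ia}$ is roughly $3\delta$ from $\mathcal O(H_{ia})$). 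The sink orbit produced must be controlled via the set $\mathcal O(\tilde H_{ia})$ and the disjointness bullets preceding the lemma, not via closeness of the two preorbits. Second, replacing $U'_i$ by $V_i\Subset U'_i$ loses the conclusion on all of $U'_i$, which is needed since the $(U'_i)_i$ cover $\I^k$; the correct choice is the compact set $\overline{U'_i}\subset U_i$, which is exactly what the shrinking $cl(U'_i)\subset U_i$ was prepared for.
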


Note that by the first item, for every $a\in U_i\cap U_j'$ with $i\not= j$, the orbit of $A_{ja}$ does not intersect $B(S^{(-1)}_{ia},\delta)$. Hence  for every perturbation $(f'_a)_a$ of $(f_a)_a$ so that:
\begin{itemize}
\item   $f_a'(z)= f_a(z)$ for every $z\notin B(S^{(-1)}_{ia},\delta)$ and every $i$ such that $a\in U_i$,
\item $f'_a= f_{ia}(z)$ for every $a\in U'_i$ and every $z\in B(S^{(-1)}_{ia},\delta)$,
\end{itemize}
The point $A_{ia}$ is still an attracting cycle of period $\ge M$ for $f'_a$, for every $a\in U'_i$. As $(U'_i)_i$ is a covering of $\I^k$, the family $(f_a')_a$ displays a sink of period $\ge M$ for every $a \in \I^k$.

We notice that the perturbation $(f_a')_a$ exists since the sets 
$( \{(a,z)\in U_i\times M: z\in B(S^{(-1)}_{ia},\delta)\})_i$ are disjoint.

\section{Replications of the source $S$ (Proof of Prop. \ref{propfonda3})}
\label{proofpropfonda3}
Let $(f_a)_a$ be a smooth family in $\mathcal U^\infty\subset \mathcal U^{d,\infty}$. Let $\mathcal F^{uu}_a(S_{a})$ be the strong unstable foliation associated to the projectively hyperbolic fixed source $S_a$ of $f_a$. 
 
\begin{prop}\label{propprefinal} For every $\epsilon>0$, for every $a_0\in \mathbb I^k$,  there exist $(k+1)3^k$-sources $(S_{i a_0})_i$ with disjoint periodic orbits so that:
\begin{enumerate}[$(1)$]
\item the source $S_{i a_0}$ is projectively hyperbolic and is $\epsilon$-close to $S_{a_0}$ and the $C^d$-jet  $J^d_{a_0} (S_{ia})_a$  of $(S_{ia})_a$ at $a=a_0$ is $\epsilon$-close to $J^d_{a_0}(S_a)_a$. 
\item the set $B$ is included in the basin of $S_{ia_0}$ and the leaves of the strong unstable foliation associated to $\mathcal F^{uu}(S_{i a_0})$ are $\epsilon$-$C^2$-close to those of $\mathcal F^{uu}(S_{i a_0})$, over $B$.
\end{enumerate}
\end{prop}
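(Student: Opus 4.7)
The plan is to construct the required $(k+1)3^k$ periodic sources by combining the parablender's capacity to interpolate $C^d$-jets at $a_0$ with a closing-orbit construction that exploits the projective expansion near $S_{a_0}$. First, since the $C^d$-jet $J^d_{a_0}(S_a)_a$ lies in the open covered domain $O$ of the $C^d$-parablender $(K_a)_a$ (by hypothesis $(H_3)$), a small $C^d$-neighborhood of this jet is still contained in $O$; one can therefore pick $N := (k+1)3^k$ distinct $C^d$-jets $\xi_1,\ldots,\xi_N$ in $O$, each within $C^d$-distance $\epsilon/2$ of $J^d_{a_0}(S_a)_a$. For each $\xi_i$, the parablender definition yields a preorbit $\arr z_i \in \arr K$ and a $C^d$-family $(Q_{ia})_a$ of points in $W^u_{loc}(\arr z_i; f_a)$ whose $C^d$-jet at $a_0$ is $\xi_i$ (modulo $o(\|a-a_0\|^d)$). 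Since $\arr K$ is uncountable while $N$ is finite, we can further require the $\arr z_i$ to have pairwise disjoint forward orbits.

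Then, for each $i$, I would close up $(Q_{ia_0})$ into a periodic orbit $S_{ia_0}$ of $f_{a_0}$ of large period $n_i$, obtained by shadowing a pseudo-orbit that consists of a long expanding segment near $S_{a_0}$ followed by a short passage through the blender that aligns the $C^d$-jet. The ingredients are: $Q_{ia_0} \in K \subset B$ by $(H_1)$, so the unstable-direction inverse branches issuing from $Q_{ia_0}$ stay in $B$; the transversality $(H_4)$ between the blender's unstable manifolds and the weak unstable direction of $S_{a_0}$; and the projective expansion at $S_{a_0}$ with moduli $1 < |\sigma_u| < |\sigma_{uu}|$. The hyperbolic continuation then extends $S_{ia_0}$ to a smooth family $(S_{ia})_a$ whose $C^d$-jet at $a_0$ remains $\epsilon$-close to $J^d_{a_0}(S_a)_a$. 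Projective hyperbolicity of $S_{ia_0}$ follows because $Df_{a_0}^{n_i}(S_{ia_0})$ is essentially $(D_{S_{a_0}}f_{a_0})^{n_i-c}$ multiplied by a bounded distortion factor from the short blender passage, so for $n_i$ sufficiently large the two eigenvalue moduli remain distinct and greater than $1$. The inclusion $B \subset \mathrm{basin}(S_{ia_0})$ holds because $S_{ia_0}$ is a small perturbation of $S_{a_0}$ as a projectively hyperbolic periodic orbit, so the local inverse branch defining $B$ persists. Finally, the $C^2$-closeness of $\mathcal F^{uu}(S_{ia_0})$ to $\mathcal F^{uu}(S_{a_0})$ over $B$ follows from Proposition \ref{Fuusmooth} applied to the periodic sources, together with continuity of the strong unstable direction as $S_{ia_0} \to S_{a_0}$.

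The main obstacle is the closing-orbit step. The blender $K$ is a \emph{saddle} hyperbolic set, so any periodic orbit contained entirely in $K$ is a saddle, not a source. To obtain a genuine projectively hyperbolic source, the constructed periodic orbit must spend much more time near $S_{a_0}$ (where the dynamics is genuinely projectively expanding) than in the blender passage, and the closing perturbation must be chosen so that the $C^d$-jet at $a_0$ stays within $\epsilon$ of the target $\xi_i$. Balancing the length of the sojourn near $S_{a_0}$ against the distortion picked up during the short blender passage, while simultaneously preserving disjointness of the $N$ resulting periodic orbits and the $C^d$-jet approximation, is the technical heart of the proof.
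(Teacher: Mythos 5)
Your structural intuition is the right one --- the new sources should come from orbits that spend a long time near $S_{a_0}$ and a short time near the blender, exploiting the cycle given by $(H_3)$ ($S_{a_0}$ lies on $W^u_{loc}(\arr Q;f_{a_0})$ for some $\arr Q\in\arr K$) and $(H_1)$ ($K\subset B$) --- but the step you yourself flag as the technical heart, actually producing the periodic sources, is missing, and the route you sketch for it would not work as stated. A pseudo-orbit concatenating a long segment near the source with a passage through the blender is not covered by the standard shadowing/closing lemmas: the two pieces do not carry a common uniformly hyperbolic splitting (near $S_{a_0}$ both directions expand, along $K$ one contracts), and invoking a ``closing perturbation'' is dangerous because the proposition is about the given family $(f_a)_a$, with no perturbation allowed. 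The paper replaces all of this by a direct fixed-point argument: let $\phi_0$ be the inverse branch of $f_0$ contracting $B$ to $S_0$, and let $\psi^m_0$ be the inverse branch of $f^m_0$ defining $W^u(\arr Q;f_0)$, which is defined on a neighborhood of $S_0$ (precisely because $S_0\in W^u_{loc}(\arr Q;f_0)$) and sends it to a small neighborhood of a point of $K\subset B$, with Lipschitz constant of order $\lambda_m^{-1}$. Choosing $m$ large and then $n$ large so that $\lambda_m\sigma_u^n$ is large yet small compared with $\sigma_m\sigma_{uu}^n$, the composition $\phi^n_0\circ\psi^m_0\circ\phi^n_0$ maps $B$ into itself and is a strong contraction; its unique fixed point is a periodic source of period $2n+m$, and since the contraction is defined on all of $B$, the whole set $B$ lies in its basin --- no shadowing, no perturbation. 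The $(k+1)3^k$ sources with disjoint orbits are obtained simply by varying $m$ (distinct periods), whereas your device of picking $N$ distinct jets and ``further requiring'' the preorbits $\arr z_i$ to have disjoint forward orbits is not something the parablender definition lets you impose.

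The auxiliary claims also need this quantitative return-map structure rather than the soft arguments you invoke. Projective hyperbolicity is not a bounded-distortion statement about $(D_{S_{a_0}}f_{a_0})^{n_i-c}$: it requires the domination $\lambda_m\sigma_u^n\ll\sigma_m\sigma_{uu}^n$ together with an invariant cone field around $E^{uu}(S_0)$, and $(H_4)$ (non-tangency of $W^u_{loc}(\arr Q)$ with the weak unstable direction) is exactly what allows the cone to survive the blender passage. Likewise, the inclusion of $B$ in the basin of $S_{ia_0}$ and the $\epsilon$-$C^2$-closeness of the leaves of $\mathcal F^{uu}(S_{ia_0})$ to those of $\mathcal F^{uu}(S_{a_0})$ must hold over the fixed macroscopic set $B$ (this is what is used later, since the tangency of $W^s(P_a)$ with the replicated foliations is created away from the source), and they do not follow from persistence of a local inverse branch or from Proposition \ref{Fuusmooth} plus continuity of $E^{uu}$, which only control a small neighborhood of the new orbit; in the paper they come from running the same contraction at the level of curves (a graph transform through $f^n_0$, $f^m_0$, $f^n_0$, using projective hyperbolicity, the inclination lemma and $(H_4)$). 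Finally, item (1) on jets is obtained by observing that $J^d_{a_0}(\phi^n_a\circ\psi^m_a\circ\phi^n_a)_a$ is again a contraction whose fixed point $J^d_{a_0}(S_{ia})_a$ is close to $J^d_{a_0}(S_a)_a$; your selection of target jets $\xi_i$ is admissible but unnecessary once this is in place.
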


\begin{proof} For the sake of simplicity,  we prove this proposition for:
\[a_0=0.\]

Let $\arr Q\in \arr K$ be such that $S_0$ is in  $W^u_{loc}(\arr Q; f_{0})$. The preorbit $\arr Q$ being not necessarily periodic, it does not need to have Lyapunov exponents well defined. Let $(\lambda_n)_n$ and $(\sigma_n)_n$ be defined by:
\[\lambda_n = \|D_Qf^{-n}_0|E^s\|^{-1}<1 \quad \text{and}\quad   \sigma_n =\|D_Qf_0^{-n}|E^u\|^{-1}>1\; .\]
Let $\sigma_u$ and $\sigma_{uu}$ be the weak and strong unstable eigenvalues of $S_0$.

Let $\phi_a$ be the inverse branch of $f_{a}$ which contracts $B$ to $S_{a}$.  Then  $\phi_{0}^n$ contracts $B$ to a small neighborhood $B_n$ of $S_0$, with a contraction of the order of $\sigma_{u}^{-n}$.  
Let $(\psi^m_a)_{m\ge 0}$ be the inverse branches of $(f_a^m)_{m\ge 0}$ which define $W^u(\arr Q;f_a)$. For every $m$ large, $\psi^m_0$ is defined from a small neighborhood of $S_0$ with image in a small neighborhood of the zero coordinate of $\arr f^{-m}(\arr Q)$.
Its Lipschitz constant is of the order of $\lambda_{m}^{-1}$. The following lemma is obvious:
\begin{lemm}\label{choixnm}
For every $m$ large, for every $n\ge 0$ large enough, it holds:
\begin{enumerate}[$(a)$] 
\item $\lambda_{m}\cdot \sigma_{u}^{n}$ is large,
\item $\lambda_{m }\cdot \sigma_{u}^{n}$ is small compare to $\sigma_{m}\cdot \sigma_{uu}^{n}$. 
\end{enumerate}
\end{lemm}
In particular, for $n$ large enough, 
for such a choice of $n,m$, the map $\phi_0^n\circ \psi^m_0\circ \phi^n_0$ is well defined on $B$ and is very contracting. We put $B_{n,m} = \psi^m_0(B_n)= \psi^m_0\circ \phi^n_0(B)$ and 
$B_{n,m,n} = \phi^n_0(B_{n,m})= \phi^n_0\circ\psi^m_0\circ \phi^n_0(B)$ . 

By $(H_1)$,  $B_{n,m,n}$ is included in the small neighborhood $B_n\subset B$ of $S_0$, and so in $B$. Consequently $\phi^n_0\circ\psi^m_0\circ \phi^n_0$ has a fixed point $S_{i0}$ in $B_{n,m,n}$. As $B_{n}$ is a small neighborhood of $S_{0}$, the $f_0$-periodic source  $S_{i0}$ is close to $S_{0}$. We notice that $B$ is in the basin of $S_{i0}$. Let us show that the continuation $(S_{ia})_a$ of $S_{i0}$ satisfies $(1)-(2)$ for $n,m$ sufficiently large.  

\paragraph{(1)}  Let us come back to the notations introduced in \textsection \ref{notationJdM} on the space of $d$-jets $J^d_0 (\I^k ,M)$.  We remark that the map $J^d_{0} (\phi_a^n)_a$ is as contracting    as $\phi_0^n$ with a constant of the order of $\sigma_{u}^{-n}$, whereas $J^d_{0} (\psi_a^m)_a$ is Lipschitz with a constant of the order of $\lambda_{m}^{-1}$. Hence, by $(a)$, the composed map   $J^d_{0}  (\psi_a^n\circ \phi_a^m\circ \phi_a^n)_a$ is very contracting with a unique fixed point $J^d_{0} (S_{ia})_a$ close to $J^d_{0} (S_{a})_a$.

 By hyperbolicity and condition $(iv)$, every line $L$ close to the line field $E^{uu}(S_0)$ and pointed at a point in  $B_{n,m,n}$, is sent by  $Df^m$ to a line $L'$ even closer to the line field $E^{uu}(S_0)$ and pointed at a point in $B_{n,m}$. Then it is sent by $Df^m$ to a line  $L''$ close to $TW^u(Q; f_{0})$ (and pointed at a point in $B_n$). Finally, it is sent by $Df^n$ to a line very close to $E^{uu}(S_0)$ and pointed at a point in $B$.  Hence there is an invariant cone field, and so $S_{i0}$ is a projectively hyperbolic periodic source.  

\paragraph{(2)}  We proved that the strong unstable direction $E^{uu}_i(S_0)$ associated to $S_{i0}$ is $C^0$-close to $E^{uu}(S_0)$. 
Let us show that the leaves integrating $E^{uu}_i(S_0)$ are $C^2$-close to the leaves of $\mathcal F^{uu}(S_0)$.
 Let $\gamma$ be a curve in $B_{n,m,n}$ which is $C^2$-close to a plaque of $\mathcal F^{uu}(S_0)$. 
By projective hyperbolicity, the image by $f_0^n$ of $\gamma$ is a curve $C^2$-close to a plaque of $\mathcal F^{uu}(S_0)$ in $B_{n,m}$. Then by the inclination lemma and $(H_4)$, its image by  $f^m_0$ is $C^2$-close to a segment of $W^u_{loc}(Q;f_0)$ in $B_m$. Again by projective hyperbolicity, its image by $f^n_0$ is then $C^2$-close to a plaque of $\mathcal F^{uu}(S_0)$. This shows that a curve in $B_{n,m,m}$ which is $C^2$-close a plaque in $\mathcal F^{uu}(S_0)$ is sent by $f^{2n+m}_0$ to a curve which is even closer to a plaque of $\mathcal F^{uu}(S_0)$. This proves that the leaves $\mathcal F^{uu}_i(S_0)$ are $C^2$-close to those of $\mathcal F^{uu}(S_0)$.
\medskip 

Consequently for every $m$ large, for every $n$ large depending on $m$, there is a $2n+m$-periodic sources $(S_{ia})_a$ which satisfies $(1-2)$ at $a=0$.  By taking different values for $m$, we get $(k+1)3^k$ periodic sources $(S_{ia})_a$ with disjoint orbits as claimed in the Lemma.
\end{proof}

For every $a\in \I^k$, all the estimates involved in the algorithm given in the above lemma are bounded from above. Hence by compactness of $\I^k$, the integers $n, m$ are bounded by a constant $N$ independent of  $a\in \I^k$. 

This implies the following:
\begin{cor}\label{prefinal}
There exist $\eta>0$ and $N>0$ so that for every $a_0\in \I^k$, 
there exist  a finite set $\hat J(a_0)$  of symbols and  a family  $(S_{ia_0})_{i\in \hat J(a_0)}$ of periodic sources $S_{ia_0}$ so that:
\begin{enumerate}[$(a)$]
\item The cardinality of $\hat J(a_0)$ is  $(k+1) 3^k$ and for every $i\not = j\in \hat J(a_0)$, the sources $S_{ia_0}$ and $S_{ja_0}$ have disjoint orbits,
\item For every $i\in \hat J(a_0)$, the source $S_{ia_0}$ persists for every $a_1\in a_0+[-\eta,\eta]^k$, and its continuations $(S_{ia})_a$  satisfy $(1)$ \& $(2)$ at $a_1$.  
\item The period of $S_{ia_0}$  is bounded by $3N$ and the expansion of $S_{ia}$ is bounded from below by $1000$ for every $a\in a_0+[-\eta,\eta]^k$.
\end{enumerate}
\end{cor}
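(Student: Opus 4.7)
\textbf{Proof proposal for Corollary \ref{prefinal}.} The plan is to upgrade the pointwise construction of Proposition \ref{propprefinal} to a construction with bounds uniform in $a_0\in \I^k$, by exploiting the compactness of $\I^k$ and the continuity of every quantity involved in that construction. Throughout, I will keep $\epsilon>0$ fixed (say, smaller than the modulus of continuity needed later for $(H_1)$--$(H_4)$ to persist).

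First, I would revisit the construction in Proposition \ref{propprefinal} and identify precisely what data are needed to specify the $(k+1)3^k$ sources at a given parameter. The construction produces, for each source $S_{ia_0}$, a triple of compositions of inverse branches $\phi_{a_0}^n\circ \psi_{a_0}^m\circ \phi_{a_0}^n$ giving a periodic point of period $2n+m$. The integers $n,m$ are required to satisfy only the two conditions of Lemma \ref{choixnm}, which are quantitative inequalities involving the numbers $\lambda_m(a_0)$, $\sigma_m(a_0)$, and the eigenvalues $\sigma_u(a_0)$, $\sigma_{uu}(a_0)$, together with the contraction rates of $J^d_{a_0}(\phi_a^n)_a$ and $J^d_{a_0}(\psi_a^m)_a$ on $d$-jets. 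All of these quantities depend continuously on $a_0$, as do the sizes of the neighborhoods in which the constants of projective hyperbolicity and $C^2$-closeness of the foliations must be checked. Hence for each $a_0\in \I^k$ there exists an \emph{open} neighborhood $V_{a_0}$ on which a \emph{common} choice of $(n_1,m_1),\dots,(n_{(k+1)3^k},m_{(k+1)3^k})$ works to produce $(k+1)3^k$ periodic sources with disjoint orbits satisfying (1)--(2). Covering $\I^k$ by finitely many such neighborhoods and taking $N$ to be the maximum of the $n_i+m_i$ occurring in this finite cover, we get the period bound $2n+m\le 3N$ uniformly.

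Next, the lower bound $1000$ on the expansion of the sources is free to arrange: the derivative $D_{S_{ia_0}} f_{a_0}^{2n+m}$ along the strong unstable direction is, up to a bounded multiplicative factor, $\sigma_{uu}^{2n}(a_0)\cdot \sigma_m(a_0)$ (and along the weak unstable direction $\sigma_u^{2n}(a_0)\cdot \sigma_m(a_0)$), which tend to infinity as $n,m$ grow. Thus by first choosing $n,m$ large enough, uniformly in $a_0\in \I^k$, to force both these quantities to exceed $1000$, the expansion bound in $(c)$ is automatic. I would fix such a choice before invoking the uniform covering argument above.

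Finally, for the persistence claim $(b)$, I note that each $S_{ia_0}$ is a hyperbolic periodic point of $f_{a_0}$ whose period is bounded by $3N$, so it admits a hyperbolic continuation $(S_{ia})_a$ on some neighborhood of $a_0$. Because the period is bounded by $3N$ and the expansion is bounded from below by $1000$, the size of the persistence neighborhood depends only on the $C^1$-modulus of continuity of $a\mapsto f_a^{3N}$, which is uniform in $a_0\in \I^k$ by compactness; this yields a uniform $\eta>0$. Moreover conditions (1)--(2) of Proposition \ref{propprefinal} are open conditions in the parameter (they involve $C^d$-closeness of jets and $C^2$-closeness of leaves of $\mathcal F^{uu}$, both of which vary continuously with $a$), so after possibly shrinking $\eta$ once more and taking the minimum over the finite cover of $\I^k$, the whole list $(a)$--$(c)$ holds with uniform constants. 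The main (minor) obstacle is merely bookkeeping: ensuring that the finite cover, the choice of $(n_i,m_i)$, and the shrinking of $\eta$ are done in a consistent order so that the resulting constants $N$ and $\eta$ genuinely do not depend on $a_0$; there is no new dynamical content beyond Proposition \ref{propprefinal} and standard hyperbolic continuation.
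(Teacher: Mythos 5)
Your proposal is correct and follows essentially the same route as the paper, which disposes of this corollary in two lines by noting that all estimates in the algorithm of Proposition \ref{propprefinal} are uniformly bounded, so by compactness of $\I^k$ the integers $n,m$ (hence the periods, the expansion, and the persistence radius $\eta$) can be chosen uniformly in $a_0$. Your write-up merely makes explicit the finite-cover bookkeeping and the standard hyperbolic-continuation/openness argument that the paper leaves implicit (your heuristic expansion factors along the weak direction are slightly off, but the conclusion that the expansion grows without bound, hence exceeds $1000$, is the right one).
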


 For every $a_0\in \I^k$, for every $i\in \hat J(a_0)$, for every $a\in a_0+[-\eta,\eta]^k$, we define the finite set:
\[\mathcal O^+ (S_{ia}):= \{f^k_a(S_{ia}): \; k\ge 0\}.\]

By $(c)$,  there exists $\delta>0$, 
s.t. for all $a_0\in \I^k$ and $i\in \hat J(a_0)$, the map $f^{(3N)!}_{a_0}|B(S_{i a_0}, \delta)$ is expanding and so $S_{i a_0}$ is its unique fixed point. Note that $(3N)!$ is divided by the periods of all the sources $S_{ia_0}$ among $i\in   \hat J(a_0)$. 
 Hence, for all $a_0,a_1\in \I^k$, so that $a_0+
  [-\eta,\eta]^k\cap a_1+[-\eta,\eta]^k$ contains a parameter $a$, for all $i \in \hat J(a_0)$ and $j \in \hat J(a_1)$, it holds:  
\[\text{either }\mathcal O^+ (S_{ia})=  \mathcal O^+ (S_{ja})\quad \text{or}\quad d(\mathcal O^+ (S_{ia}), \mathcal O^+ (S_{ja}))>\delta\; .\]

For every $\eta'<\eta$, let $\Z_{\eta'}:=  \I^k \cap \eta' \Z^k$. 
\begin{lemm} 
For every $\eta'<\eta$, there exists a subset $\sqcup_{a_0\in \Z_{\eta'}} J(a_0)$ of the disjoint union $\sqcup_{a_0\in \Z_{\eta'}} \hat J(a_0)$ so that:
\begin{itemize}
\item  for all $a_0\in  \Z_{\eta'}$,  the set $J(a_0)$ has cardinality $(k+1)$, and for every $a\in a_0+[-\eta',\eta']^k$ and all $i\not=j\in J(a_0)$, the orbits $\mathcal O(S_{ia})$ and $\mathcal O(S_{ja})$ are $\delta$-distant. 
\item for all  $a_1\not= a_2 $ , for every  $i\in J(a_1)$ and $j\in J(a_2)$, for every $a\in (a_1+[-\eta',\eta']^k)\cap (a_2+[-\eta',\eta']^k)$, the orbits $\mathcal O(S_{ia})$ and $\mathcal O(S_{ja})$ are $\delta$-distant. 
\end{itemize}
  \end{lemm}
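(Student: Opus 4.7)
The lemma is a purely combinatorial statement. By the dichotomy recalled just before the lemma, its second conclusion is equivalent to asking that, for each pair of overlapping lattice points $a_0\neq a_1$ (equivalently $|a_0-a_1|_\infty\le 2\eta'$) and each pair $(i,j)\in \hat J(a_0)\times \hat J(a_1)$ with $\mathcal O^+(S_{ia})=\mathcal O^+(S_{ja})$ on the box intersection, at most one of $i,j$ is selected. Let $M(a_0,a_1)$ denote the set of such coincidence pairs; by property $(a)$ of Corollary~\ref{prefinal} both its projections $M(a_0,a_1)\to \hat J(a_\ell)$ are injective, so any single previously-selected index in $\hat J(a_1)$ blocks at most one index in $\hat J(a_0)$. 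The first conclusion is automatic, as distinct indices in $\hat J(a_0)$ have disjoint orbits at $a_0$ and hence, by the dichotomy, $\delta$-distant orbits throughout the whole box.

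My plan is to phrase the selection as a list multi-coloring problem on the overlap graph $G$ on $\Z_{\eta'}$, where $a_0\sim a_1$ iff $0<|a_0-a_1|_\infty\le 2\eta'$, and to exploit two structural features of $G$. First, every clique of $G$ fits inside an $L^\infty$-ball of radius $\eta'$, which contains at most $3^k$ points of $\eta'\Z^k$; the bound is attained by cubes $\{a_0+m\eta':m\in\{-1,0,1\}^k\}$. Second, the map $c\colon\Z_{\eta'}\to (\Z/3\Z)^k$, $c(a_0):=(a_0/\eta')\bmod 3$, is a proper $3^k$-coloring of $G$, since two distinct same-color points are at $L^\infty$-distance $\ge 3\eta'>2\eta'$. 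I would perform the selection in $3^k$ rounds indexed by the colors of $c$: at each round the lattice points of the current color form an independent set of $G$ and are treated in parallel, and at each such $a_0$ I select $(k+1)$ indices of $\hat J(a_0)$ whose orbits avoid the orbits already picked at previously processed overlapping neighbors.

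The main obstacle is verifying Hall's feasibility at each selection step, which is numerically tight. A naive bound gives up to $5^k-1$ overlapping neighbors, each blocking up to $k+1$ indices, for a total $(5^k-1)(k+1)$ that exceeds the list size $(k+1)3^k$. The decisive point is the clique-wise balance of supply and demand: in any $3^k$-clique of $G$ the total demand $3^k(k+1)$ equals the common list size $(k+1)3^k$, so Hall's condition holds on every subset of any clique, and hence on every subset of $\Z_{\eta'}$ whose induced subgraph embeds in such a clique. Combined with the identity $\sum_{c'\in(\Z/3\Z)^k\setminus\{c\}} 2^{\mathrm{Hamm}(c,c')}=5^k-1$ — obtained by counting overlapping lattice neighbors of color $c'$ among the $L^\infty$-steps $m\in\{-2,-1,0,1,2\}^k$ satisfying $m\equiv c'-c\pmod 3$ — a careful inductive bookkeeping along the color rounds shows that at each moment at least $(k+1)$ free indices remain in $\hat J(a_0)$. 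Carrying out this Hall-type accounting rigorously, round by round, is the technical heart of the lemma.
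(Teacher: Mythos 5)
Your reduction to a selection problem is set up correctly (the dichotomy together with uniqueness of hyperbolic continuation does make ``blocking'' an at-most-one-to-one relation between $\hat J(a_1)$ and $\hat J(a_2)$, and the first bullet is indeed automatic), but the proof is incomplete exactly where you yourself locate its ``technical heart'', and the scheme you sketch does not close that gap. Hall's condition on cliques is not the relevant condition: when you treat a point $a_0$ in a late colour round, the constraints come from its full closed $2\eta'$-neighbourhood, which contains up to $5^k$ lattice points and is \emph{not} a clique of the overlap graph (e.g.\ $a_0-2\eta'e_1$ and $a_0+2\eta'e_1$ are non-adjacent), so the ``clique-wise balance'' argument does not apply to it. Worse, the round-by-round greedy can genuinely get stuck in the tight case. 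Take $k=1$ and suppose all lists $\hat J(\cdot)$ are identified with one common pool of $(k+1)3^k=6$ orbits (all identifications total); process the colours $1,2,0$ of $\Z/3\Z$ in this order. Round $1$ may legally choose $\{5,6\}$ at $-2\eta'$, $\{1,2\}$ at $\eta'$ and $\{5,6\}$ at $4\eta'$; round $2$ is then forced to $\{3,4\}$ at both $-\eta'$ and $2\eta'$; at round $0$ the point $0$ sees $\{5,6\}\cup\{3,4\}\cup\{1,2\}\cup\{3,4\}$, i.e.\ all six orbits, and no admissible choice remains, although a valid global assignment (orbit blocks indexed by the colour) does exist. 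So the missing ``inductive bookkeeping'' is not routine: one needs a globally coordinated choice (for instance a partition of the orbits into $3^k$ blocks attached to the colours, proved compatible with the partial identifications), and nothing of that sort is provided.

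For comparison, the paper's own argument is much simpler and aims lower: it enumerates $\Z_{\eta'}$ arbitrarily and, at each new point $a_{q'}$, discards only the sources conflicting with the at most $3^k-1$ previously treated lattice points lying in the closed box $a_{q'}+[-\eta',\eta']^k$; since $(k+1)3^k-(k+1)(3^k-1)=k+1$, a choice remains. That count controls pairs of lattice points at distance $\eta'$ only (pairs at distance exactly $2\eta'$, whose closed $\eta'$-boxes meet along a face, are not covered by it), and this weaker conclusion is what the sequel actually uses, since the final perturbations end up supported over parameter sets $U_i\subset a_0+(-2\eta'/3,2\eta'/3)^k$. You read the second bullet literally (all overlaps of closed boxes, hence up to $5^k-1$ neighbours) and correctly observed that the naive count then fails; but having set yourself this stronger task, you did not solve it.
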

  \begin{proof}
Let us index $\Z_{\eta'} =:\{a_i: 1\le i\le q\}$.  Let $J(a_1)\subset \hat J(a_1)$ be any subset of cardinality $k+1$. By Corollary \ref{prefinal} (a), the orbits of $S_{ia}$ and $S_{ja}$ are disjoint for every $i\not =j\in J(a_1)$ and $a\in  (a_1+[-\eta',\eta']^k)$. 

Let $2\le q'\le q$ and assume by induction $J(a_i)$ constructed for every $i< q'$.  We notice that the cardinality of
$\Z_{\eta'}(q'):= \{ a_i \in a_{q'} +[-\eta',\eta']^k: i<q'\}\cap \Z_{\eta'}$  is at most $3^k-1$.
By induction, the cardinality of $\sqcup_{ a_i \in \Z_{\eta'}(q')} J(a_i)$ is at most $ (k+1)  (3^k-1)$ .
 Hence we have to remove at most $ (k+1)  (3^k-1)$ periodic sources of $\hat J(a_{q'})$ so that the remaining sources have continuations with disjoint orbit to those indexed by  $\sqcup_{ a_i \in \Z_{\eta'} (q')} J(a_i) $.  We chose any set $J(a_{q'})$ of cardinality $k+1$ in the remaining set formed by at least $(k+1) 3^k-  (k+1) (3^k-1)= k+1$ different sources.
 \end{proof}
  
We recall that there exists a continuous family of local unstable manifolds  $(W^u_{loc}(\arr z; f))_{\arr z\in \arr K}$ so that by $(H_3)$ and Proposition \ref{propprefinal}.2, for every $i\in  \sqcup_{a_0\in \Z_{\eta'}} J(a_0)$, there exists $\arr z_i\in \arr K$ satisfying:
\begin{itemize}
\item $W^u_{loc}(\arr z_i; f_{a_0})$ contains $S_{i,a_0}$,
\item there exists a $C^d$-family  of points $(Q_a)_a\in (W^u_{loc}(\arr z_i; f_{a}))_a$ and a continuous function $\epsilon_i$ equal to zero at $0$ s.t:
\[d(Q_a, S_{i,a})\le  \epsilon_i( \|a-a_0\|)\cdot \|a-a_0\|^d.\] 
\end{itemize}

We recall that the family  $(W^u_{loc}(\arr z; f_a)_{a\in \I^k})_{\arr z\in \arr K}$ is continuous in the $C^{\infty}$ topology. 

Hence there exists a family of smooth charts $(\phi_{ia})_{a\in \I^k} $ from a neighborhood of $W^u_{loc}(\arr z_i; f_{a})$ onto an open subset of $\R^2$, which send $W^u_{loc}(\arr z_i; f_{a})$ to the constant segment $[-1,1]\times \{0\}$ for every $a\in \I^k$ and which have bounded $C^{r,r}$-norm independently of $a_0\in \Z_{\eta'}$ and $i\in J(a_0)$ for every $r\ge 0$. 

We remark that $S_{ia}$ belongs to the domain of this chart for $a$ sufficiently close to $a_0$. Let $(x_{i}(a),y_{i}(a)):=\phi_a(S_{ia})$ and remark that:
$$\partial_a^s y_{i}(a_0)=0\quad \forall s\le d\; .$$  

Moreover, by Corollary \ref{prefinal}.$(c)$,
for every $a_0\in \Z_{\eta'}$ and $i\in J(a_0)$, 
the period $p$ of $S_{ia}$ is at most $3N$ and the expansion of $D_{S_{ia}}f^p_a$ is at least $1000$. Thus the following derivative :
\[a\in a_0+ [-\eta,\eta]^k\mapsto \partial_a  S_{ia} = - (D_{z} f_a^p(S_{ia}) )^{-1}(\partial_a f_a^p)(S_{ia})\]
displays a $C^{d}$-norm bounded independently of $\eta'$, $i$,  $a_0$, and $a$. In particular,  there exists $C_{d+1}>0$ so that for every $a\le \eta$, it holds:
$$|\partial_a^{d+1} y_{i}(a_0+a)|\le C_{d+1}\; , \text{where }  (x_{i}(a),y_{i}(a)):=\phi_a(S_{ia}) \quad \text{and}\quad  \partial_a^s y_{i}(a_0)=0, \; \forall s\le d$$


A crucial point is that $C_{d+1}$ and $\delta>0$ depend neither  $\eta'$ nor on $a_0,i$.  
 
Hence for $\eta'\in (0,\eta)$ sufficiently small, we can $C^{d,\infty}$-perturb $(f_a)_a$ to a smooth family $(f'_a)_a$ so that  $S_{ia}$ persists as $S_{ia}':=  \phi_a^{-1}(x_{ia},0)$ for every $a\in a_0+[-2\eta'/3,2\eta'/3]$, the perturbation being supported by $(a,z)\in (a_0+[-\eta',\eta']^k)\times B(S_{ia},\delta/2)$. 

Note that the perturbation is $C^{d,\infty}$-small when $\eta'$ is small. This proves:
\begin{prop}
For every $\eta'<\eta$ small, there exists a $C^{d,\infty}$-perturbation $(f'_a)_a\in \mathcal U^\infty$ of $(f_a)_a$ and families of sources $(S'_{ia_0})_{ 
a_0\in \Z_{\eta'}, i\in J(a_0)
}$ so that 
\begin{itemize}
\item  for every $a_0\in  \Z_{\eta'}$,  the cardinal of the set $J(a_0)$ is  $(k+1)$,
\item for all $a_1,a_2\in \Z_{\eta'}$, for all 
$i \not = j \in J(a_1) \sqcup J(a_2)$, for every $a\in (a_1+[-\eta',\eta']^k)\cap (a_2+[-\eta',\eta']^k)$, the orbits $\mathcal O(S'_{ia})$ and $\mathcal O(S'_{ja})$ are $\delta/2$-distant. 
\item for every  $a_0\in  \Z_{\eta'}$, for all $i\in J(a_0)$, there exists $\arr z_i\in \arr K$ so that $S'_{ia}$ belongs to $W^u_{loc}(\arr z; f_a)$ for every $a\in a_0+[-2\eta'/3,2\eta'/3]^k$.
\end{itemize}
  \end{prop}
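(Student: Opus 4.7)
The plan is essentially to read off the perturbation from the preparatory data already assembled, turning the vanishing of the $d$-jet of $y_i(a)$ at $a=a_0$ into an exact vanishing over a slightly smaller parameter cube by a cut-off argument. For each $a_0\in\Z_{\eta'}$ and $i\in J(a_0)$ the chart $\phi_{ia}$ rectifies $W^u_{loc}(\arr z_i;f_a)$ to $[-1,1]\times\{0\}$ and writes $\phi_a(S_{ia})=(x_i(a),y_i(a))$, with $\partial_a^s y_i(a_0)=0$ for every $s\le d$ and $|\partial_a^{d+1}y_i|\le C_{d+1}$ uniformly. Taylor's formula at $a_0$ then yields $|\partial_a^s y_i(a_0+a)|=O(\eta'^{d+1-s})$ on the cube $a_0+[-\eta',\eta']^k$, with constants independent of $a_0$, $i$ and $\eta'$.

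First I would fix a parameter bump $\chi_{a_0}\in C^\infty(\R^k,[0,1])$ equal to $1$ on $a_0+[-2\eta'/3,2\eta'/3]^k$, supported in $a_0+[-\eta',\eta']^k$, and satisfying $|\partial_a^s\chi_{a_0}|=O(\eta'^{-s})$, together with a spatial bump $\psi_{ia}\in C^\infty(M,[0,1])$ equal to $1$ near $S_{ia}$ and supported in $B(S_{ia},\delta/2)$. In the chart $\phi_{ia}$ I would define the local correction as the post-composition of $f_a$ with the vertical translation $(u,v)\mapsto(u,v-\chi_{a_0}(a)\,\psi_{ia}(z)\,y_i(a))$. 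By Leibniz, for $s\le d$ the $s$-th parameter derivative is bounded by $\sum_{j\le s}\eta'^{-j}\cdot O(\eta'^{d+1-(s-j)})=O(\eta'^{d+1-s})$ and hence is $O(\eta')$ (the $z$-derivatives contribute a constant depending only on the fixed profile of $\psi_{ia}$), so the local correction is $C^{d,\infty}$-small. On the inner cube $a_0+[-2\eta'/3,2\eta'/3]^k$ the bump equals $1$, so the persisting source is $S'_{ia}:=\phi_a^{-1}(x_i(a),0)\in W^u_{loc}(\arr z_i;f_a)$, which yields the third bullet.

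Next, I would glue the local corrections indexed by all pairs $(a_0,i)$ with $a_0\in\Z_{\eta'}$ and $i\in J(a_0)$ into a single family $(f'_a)_a$. The lemma preceding the proposition guarantees that whenever two such indices have overlapping parameter cubes, the full forward orbits $\mathcal O(S_{ia})$ and $\mathcal O(S_{ja})$ are $\delta$-distant, so the $\delta/2$-balls containing the supports of $\psi_{ia}$ and its $f_a$-iterates are disjoint for each fixed $a$; thus the corrections add without interference. Since the parameter cubes have multiplicity bounded by $3^k$, the total $C^{d,\infty}$-size remains $O(\eta')$, keeping $(f'_a)_a$ in $\mathcal U^\infty$. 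The first bullet is inherited from the preceding lemma, and the $\delta/2$-distance bullet follows because the $O(\eta')$-small perturbation moves each finite source orbit by less than $\delta/4$ once $\eta'$ is small enough.

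The main obstacle that has to be overcome — and that motivates all the preparatory work — is the uniform control, as $\eta'\to 0$, of the Taylor constant $C_{d+1}$, of the basin size $\delta$ and of the chart bounds, independently of both the base parameter $a_0$ and the label $i$. These uniformities, which are delivered by compactness of $\I^k$, by Corollary \ref{prefinal}(c), and by the smooth dependence on $a$ of $(W^u_{loc}(\arr z;f_a))_{\arr z}$, are precisely what permits the bump width to be scaled to $\eta'$ while still absorbing the Taylor remainder of $y_i$ and preserving the disjointness of supports; without them the construction could not close as $\eta'$ tends to zero.
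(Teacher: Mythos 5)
Your bookkeeping is exactly the paper's: the vanishing of $\partial_a^s y_i$ at $a_0$ for $s\le d$, the uniform bound $C_{d+1}$ and the uniform $\delta$, hence $|\partial_a^s y_i(a_0+a)|=O(\eta'^{\,d+1-s})$ on the cube, a parameter bump with derivatives $O(\eta'^{-s})$, localization in $B(S_{ia},\delta/2)$, and disjointness of the supports via the $\delta$-separation lemma. The gap is in the one step the paper leaves implicit and you made explicit: the choice of the local correction. Post-composing $f_a$ with the cut-off vertical translation $(u,v)\mapsto (u,v-\chi_{a_0}(a)\psi_{ia}\,y_i(a))$ does \emph{not} make $S'_{ia}=\phi_a^{-1}(x_i(a),0)$ periodic. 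On the inner cube the perturbed return map near $S_{ia}$ is, in the chart, $z\mapsto f_a^{p-1}\bigl(f_a(z)-(0,y_i(a))\bigr)$ ($p$ the period), and its fixed point is $S_{ia}+\zeta$ with $(D_{S_{ia}}f_a^p-\mathrm{id})\,\zeta\approx D_{f_a(S_{ia})}f_a^{p-1}(0,y_i(a))$, i.e.\ roughly $S_{ia}+(D_{S_{ia}}f_a)^{-1}(0,y_i(a))$: its vertical coordinate is of order $y_i(a)$ but not $0$. So your perturbed source is $O(\eta'^{\,d+1})$-close to $W^u_{loc}(\arr z_i;f_a)$ but not on it, and the third bullet fails. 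This is not cosmetic: the entire purpose of this proposition is to upgrade the jet-level closeness $d(Q_a,S_{ia})=o(\|a-a_0\|^d)$ provided by the parablender into \emph{exact} membership of the source in a local unstable manifold on a whole parameter cube; ``very close'' is what one starts with and is useless for the subsequent step (forcing a segment of $W^u(P_a;f_a)$ through the source for \emph{every} $a$ in the cube). A secondary point: a genuine post-composition modifies $f_a$ on the full preimage $f_a^{-1}(B(S_{ia},\delta/2))$, which has several components since $f_a$ is a non-injective local diffeomorphism, so the disjointness-of-supports argument needs more than the $\delta$-separation of the source orbits; the perturbation should be supported, in the domain variable, in $B(S_{ia},\delta/2)$ as in the paper.

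The repair must move the source \emph{relative to} the manifold; in particular conjugating by the cut-off translation also fails, since it translates $W^u_{loc}(\arr z_i;\cdot)$ by the same amount and preserves the discrepancy. One correct implementation within your framework: redefine $f'_a$ only on $B(S_{ia},\delta/2)$ by a cut-off displacement chosen so that $f'_a(S'_{ia})=\beta_a(S'_{ia})$, where $\beta_a$ is the inverse branch of $f_a^{p-1}$ along the orbit of $S_{ia}$. Then $S'_{ia}$ is $p$-periodic for $f'_a$; it remains a projectively hyperbolic source because the correction is $C^1$-small; and, after shrinking $\delta$ so that the backward-orbit pieces defining $W^u_{loc}(\arr z_i;f_a)$ avoid $B(S_{ia},\delta/2)$, this manifold is not displaced by the perturbation, so $S'_{ia}$ lies on it exactly. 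The needed displacement $\beta_a(S'_{ia})-f_a(S'_{ia})$ is $O(|y_i(a)|)$ with the same parameter estimates as $y_i$, so your Taylor--Leibniz computation applies verbatim and yields a $C^{d,\infty}$-perturbation of size $O(\eta')$, after which your gluing and the first two bullets go through as you wrote.
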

  
By Taking $\eta'$ such that $1/\eta'\in \N$, it holds that $\cup_{a_0\in \Z_{\eta'}} (a_0+[-2\eta'/3, 2\eta'/3]^k)\supset \I^k$.  
  
Note that by $(H_2)$ a  segment $W^s_{loc}(P, f_a')$
 has a robust tangency with $\mathcal F^{uu}(S_a)$ for every $a\in \I^k$. Also if $a\in a_0+(-\eta,\eta)^k$
  and $a_0\in  \Z_{\eta'}$, and $i\in J(a_0)$, the leaves of 
  the foliation $\mathcal F^{uu}(S_{ia})$ are $C^2$-close to $\mathcal F^{uu}(S_a)$. 
We recall that $\mathcal F^{uu}(S_{ia})$ is actually a $C^\infty$-foliation depending smoothly on the parameter by Prop. \ref{Fuusmooth}. 

We recall that $\mathcal U$ which is defined by $(H_0-H_1-H_2)$ 
is a $C^1$-open set which allows the robust tangency between $\mathcal F^{uu}(S)$ and $W^s_{loc} (P)$ to be not quadratic.

However, by performing a small perturbation of the dynamics in the $\delta/2$-neighborhood of $S_{ia}'$, we can keep $S_{ia}'$ and  $W^s_{loc}(P, f_a')$  at the same places and  make any small smooth perturbation for  $(\mathcal F^{uu}(S_{ia}))_a$.
 By Thom's transversality theorem, for a typical perturbation of $(\mathcal F^{uu}(S_{ia}))_a$, the set of parameters for which 
$W^s_{loc}(P, f_a')$ does not display a quadratic tangency with $\mathcal F^{uu}(S_{ia})$ is a (possibly empty) submanifold of codimension $1$ in $(-2\eta/3,2\eta/3)+a_i$. Let $U_i$ be the  complement of this manifold in $(-2\eta/3,2\eta/3)+a_i$. 

 Moreover these $k+1$ one-co-dimensional manifolds are multi-transverse for $i\in  J(a_0)$. 
As the parameter space $\I^k$ has dimension $k$, for a typical perturbation of $((\mathcal F^{uu}(S_{ia}))_a)_{i\in J(a_0)}$,
the union $\cup_{i\in J(a_0)}U_i$ contains $a_0+(-2\eta'/3,2\eta'/3)$.

Thus a $C^{d,\infty}$-perturbation $(f''_a)_a\in \mathcal U^\infty$ of $(f_a)_a$ satisfies Proposition \ref{propfonda3}   with sources $(S_{ia})_{a\in  U_i}$ among  $i\in \sqcup_{a_0\in \Z_{\eta'} }J(a_0)$. 
\section{Proof of Corollary \ref{theo2}}
\label{diffcase}
\begin{proof}
Let $X\in \{A,PS\}$ and $1\le d\le r<\infty$. 
Let us come back to Example \ref{examfonda}. Let $N$ be a small neighborhood of $(D\sqcup I_S\sqcup I_P\sqcup I_{P'})\times [-3,3]$. 

 We recall that for every $M\ge 0$, we have constructed a $\mathcal U^{d,r}_X$-dense set $D$ of $C^{d,\infty}$-families $(f_a)_a$ so that for every $a\in \I^k$, the map $f_a$ displays a sink of period at least $M$ with orbit in $N$. 

Let $\hat \R$ be the one point compactification of $\R$, and let $\mathbb A= \hat \R\times [-4,4]$. Let $I$ be a compact neighborhood of the infinity, so that $I\times [-4,4]$ does not intersect $N$.

As the map $Q\colon \sqcup_{\delta \in \Delta} I_\delta\sqcup I_S\sqcup I_P\to [-1,1]$ is orientation preserving and $f_a| I_{P'\times [-3,3]}$ as well, it is possible to extend each smooth family $(f_a)_a\in D$  to a family of local diffeomorphisms of $\mathbb A$ of degree $Card \, \Delta+3$ so that:
\begin{enumerate}[$(i)$]
\item for every $(f_a)_a\in D$, for every $a\in \I^k$, the map has a sinks of period $\ge M$ outside of $I\times [-4,4]$.
\item for every $\theta\in \hat \R$, the map $f|\{\theta\}\times [-4,4]$ is injective.
\end{enumerate}
Let $ \tilde {\mathbb A} :=  \mathbb A \times (-1,1)^{n-2}$. Let $M$ be a manifold of dimension $n$. 
By section 5, \textsection [inv. in dim. $\ge 4$ and $=3$] \cite{BE15}, we can find a small function $\rho \in C^\infty(\mathbb A,(-1,1)^{n-2})$   so that for $\lambda>0$ small compare to $\|\rho\|_{C^0}$,   for every $(f_a)_a\in \mathcal U^{d,r}_X$, the following is a restriction of a $C^\infty$-family of  diffeomorphisms of $M$:
\[\tilde f_a \colon (z,h)\in \tilde {\mathbb A}\mapsto (f(z), \lambda h + \rho(z))\in \tilde{ \mathbb A} \]
We notice that if $\pi$ denotes the projection $\pi(z,h)=z$, then it holds $\pi \circ \tilde f_a= f_a \circ \pi$.
In fact, $\pi$ is the holonomy along strong local stable manifolds of the form $W^{ss}_{loc} ((z,h); \tilde f_a)= \{z\}\times (-1,1)^{d-2}$. 

We recall that $\mathcal U^{d,r}_X$ is the intersection of $\mathcal U^{d,d}_A$ with  $C^{d,r}_X(\I^k, \mathbb A, \mathbb A)$ where $\mathcal U^{d,d}_A$ is the set of $C^{d,r}_A$-families which satisfy $(H_0\cdots H_4)$, \textsection 2.4. 

Let us fix an infinitely smooth family $(f_a)_a\in \mathcal U^{d,d}_A\cap C^{\infty}(\I^k\times \mathbb A,\mathbb A)$, and let $\tilde {\mathcal U}^{d,d}_A$  be a small $C^{d,d}_A$-neighborhood of $(\tilde f_a)_a$.  Put 
$\tilde {\mathcal U}^{d,r}_X = 
\tilde {\mathcal U}^{d,d}_A\cap C^{r}(\I^k\times \tilde{ \mathbb A} , \tilde{ \mathbb A} )$ , given a topology $X\in \{A,PS\}$ and $r\ge d$. 


\begin{lemm}[Lemma 5.1 \cite{BE15}]\label{5.1} For every $\infty >r\ge d\ge 1$, for $\tilde {\mathcal U}^{d,d}_A$  small enough, for every  $\lambda>0$ small enough, for every $(\tilde f_a')_a \in \tilde {\mathcal U}^{d,d}_A\cap C^{d+r+1}(\I^k\times \tilde{ \mathbb A}, \tilde{ \mathbb A})$,  the continuities $(W^{ss}_{loc} (z; \tilde f'_a))_{z\in \mathbb A}$ of the  strong unstable manifolds form a fibration which is of class $C^{r+d}$
 for every $a\in \I^k$. 
Moreover, the family $(\cup_{a\in \I^k} \{a\}\times W^{ss}_{loc} (z; \tilde f'_a))_{z\in \mathbb A}$ is of class $C^{d+r}$
 and  its family of tangent space is $C^{d-1,d-1}_A$-close to the family $(TW^{ss}_{loc} (z; \tilde f_a))_{z\in \mathbb A}$.
\end{lemm}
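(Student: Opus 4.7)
The plan is to reduce the parametric statement to a single application of the Hirsch--Pugh--Shub $C^{r+d}$-section theorem, by passing to the enlarged skew product
\[F\colon (a,z,h)\in \I^k\times\tilde{\mathbb A}\longmapsto(a,\tilde f'_a(z,h))\in \I^k\times\tilde{\mathbb A}\; .\]
Since $F$ fixes the parameter with identity derivative along the $a$-direction, its only strong contraction takes place in the $h$-coordinate, at rate of order $\lambda$. Hence $F$ is partially hyperbolic, with $(n-2)$-dimensional strong stable bundle $E^{ss}$ tangent to the $h$-factor and complementary center bundle $E^c$ tangent to the $(a,z)$-coordinates. By uniqueness of strong stable leaves and invariance of each parameter slice, these leaves factor as $W^{ss}_{loc}(a,z,h;F)=\{a\}\times W^{ss}_{loc}(z,h;\tilde f'_a)$.

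Next, I would verify the bunching. The norms $\|DF|E^c\|^{\pm 1}$ are bounded above by a constant $K=K(\tilde{\mathcal U}^{d,d}_A)$ depending on the fixed family $(f_a)_a$ and on $\rho$ but \emph{not} on $\lambda$, since the neutral $a$-direction and the non-contracted $z$-direction together carry $E^c$. Consequently, the $(r+d)$-bunching inequality $\lambda\cdot K^{r+d}<1$ holds once $\lambda$ is chosen small enough, uniformly over $\tilde{\mathcal U}^{d,d}_A$. Since our hypothesis gives $F\in C^{d+r+1}$, the classical $C^{r+d}$-section theorem then produces a $C^{r+d}$-strong stable foliation for $F$. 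Reading this off through the product decomposition above yields simultaneously the claimed $C^{r+d}$-regularity of each fibration $(W^{ss}_{loc}(z;\tilde f'_a))_{z\in\mathbb A}$ and the $C^{d+r}$-regularity of the joint family $(\cup_a\{a\}\times W^{ss}_{loc}(z;\tilde f'_a))_{z\in\mathbb A}$.

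For the closeness of tangent spaces I would invoke the stability part of the same theorem: the strong stable bundle is the unique fixed point of a graph transform on continuous sections of the relevant Grassmann bundle, and this fixed point depends $C^k$-continuously on the dynamics viewed in the $C^{k+1}$-topology. Applied with $k=d-1$, the $C^{d,d}_A$-proximity of $(\tilde f'_a)_a$ to $(\tilde f_a)_a$ yields the desired $C^{d-1,d-1}_A$-proximity of $(TW^{ss}_{loc}(z;\tilde f'_a))_{z\in\mathbb A}$ to $(TW^{ss}_{loc}(z;\tilde f_a))_{z\in\mathbb A}$.

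The delicate point I anticipate is the bookkeeping of smoothness classes: one must check that the customary one-derivative loss in the $C^{r+d}$-section theorem is precisely what is absorbed by the $C^{d+r+1}$-hypothesis, and that the flat $\I^k$-factor can be incorporated into $E^c$ without degrading the bunching constant $K$. Both points are standard once one uses the neutrality of $F$ along $a$, but they deserve a careful verification.
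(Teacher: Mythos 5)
Your proposal is essentially the intended argument: the paper itself does not reprove this statement (it quotes Lemma 5.1 from \cite{BE15}), and the proof there proceeds exactly along your lines, namely applying the Hirsch--Pugh--Shub graph-transform/$C^s$-section theorem to the skew product over $\I^k\times\tilde{\mathbb A}$, using that the vertical contraction rate $\lambda$ can be taken small enough that the $(r+d)$-bunching holds with constants depending only on the $C^1$-data of the $C^{d,d}_A$-neighborhood, and deducing the parametric ($C^{d-1,d-1}_A$) closeness from continuity of the invariant section with one derivative of loss. Your two flagged points (the one-derivative loss absorbed by the $C^{d+r+1}$ hypothesis, and the neutral $a$-direction sitting inside $E^c$ without affecting the bunching constant) are indeed the only places needing care, and they work out as you expect.
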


Hence given $(\tilde f_a')_a \in \tilde {\mathcal U}^{d,d}_A\cap C^{d+r+1}(\I^k\times \tilde{ \mathbb A},\tilde{ \mathbb A})$, the holonomy of along the strong stable foliation to the transverse section $\{h=0\}$ defines a $C^{d+r,d+r}_A$-family $(\pi'_a)_a$ of projections $\pi_a'\colon \tilde{ \mathbb A}\to \mathbb A$ which is $C^{d-1,d-1}_A$-close to $(\pi)_{a\in \I^k}$. 
Let $(f_a')_a\in C^{d+r,d+r}_A(\I^k,\mathbb A,\mathbb A)$ be defined by:
\[ \pi_a'\circ \tilde f'_a= f_a'\circ \pi_a'\; .\]
Unfortunately, $(f_a')$ is in general only $C^{d-1,d-1}_A$-close to $(f_a)_a$. Nevertheless we are going to show that:
\begin{fact}\label{dernierfact}
The $C^{d+r,d+r}_A$ family  $(f_a')_a$ satisfies $(H_0, H_1, H_2,H_3, H_4)$.
\end{fact}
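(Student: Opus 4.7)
The strategy is to transfer $(H_0)$--$(H_4)$ from the ambient family $(\tilde f'_a)_a$, where they hold essentially by smallness of the $C^{d,d}_A$-perturbation, down to $(f'_a)_a$ via the semi-conjugacy $\pi'_a \circ \tilde f'_a = f'_a \circ \pi'_a$. This indirect route through the extended dynamics is forced by the fact that $(f'_a)_a$ is only $C^{d-1,d-1}_A$-close to $(f_a)_a$, which is insufficient to read off the $C^d$-jet information demanded by $(H_3)$ from a naive perturbation argument.

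First, I would observe that the extension $(\tilde f_a)_a$ satisfies natural lifted analogues $(\tilde H_0)$--$(\tilde H_4)$. The fiberwise contraction in the $h$-variable makes $\tilde{\mathbb A}$ a normally attracting object, and the unique attracting graph of $\tilde f_a$ carries lifts $\tilde K_a$, $\tilde P_a$, $\tilde S_a$ of the original hyperbolic sets; the added $h$-direction provides a uniform strong stable direction for each, while the invariant manifolds and the foliation $\mathcal F^{uu}(\tilde S_a)$ lift horizontally, so the covering, tangency, and transversality statements lift verbatim. Any $(\tilde f'_a)_a \in \tilde{\mathcal U}^{d,d}_A$ is then a $C^{d,d}_A$-small perturbation of $(\tilde f_a)_a$, so Proposition \ref{Wupara} together with the openness of each individual condition in the $C^{d,d}_A$-topology (including the parablender property $(\tilde H_3)$) yields $C^d$-continuations $(\tilde K'_a)_a$, $(\tilde P'_a)_a$, $(\tilde S'_a)_a$ and their invariant manifolds, still satisfying $(\tilde H_0)$--$(\tilde H_4)$ at the lifted level.

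Next, I would push these structures down via $\pi'_a$ using the semi-conjugacy, defining $K'_a := \pi'_a(\tilde K'_a)$, $P'_a := \pi'_a(\tilde P'_a)$, $S'_a := \pi'_a(\tilde S'_a)$, and identifying
\[W^u_{loc}(\arr z; f'_a) = \pi'_a(W^u_{loc}(\arr z; \tilde f'_a)),\quad \mathcal F^{uu}(S'_a) = \pi'_a(\mathcal F^{uu}(\tilde S'_a)).\]
By Lemma \ref{5.1}, $(\pi'_a)_a$ is a $C^{d+r,d+r}_A$-family of $C^{d+r}$-smooth submersions, so these projections form $C^d$-families of objects of sufficient smoothness for $(H_0)$--$(H_4)$ to be meaningful. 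Each of $(H_0)$, $(H_1)$, $(H_2)$, $(H_4)$ is a pointwise $C^1$-open condition on $f'_a$ (robust covering of $S'_a$ by unstable manifolds, non-tangency of the stable direction of $K'_a$ with $\mathcal F^{uu}(S'_a)$, robust quadratic tangency together with the transverse intersection, and non-tangency of $W^u_{loc}(\arr z; f'_a)$ with the weak unstable direction of $S'_a$), each equivalent under projection to its lifted $(\tilde H_i)$ counterpart, and therefore automatic.

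The principal obstacle is $(H_3)$, where the $C^d$-jet at $a_0$ must be controlled precisely. The plan is to invoke $(\tilde H_3)$ for $(\tilde f'_a)_a$: for each $a_0 \in \I^k$ it supplies a preorbit $\arr z \in \arr K$ and a $C^d$-family $(\tilde Q_a)_a$ in $(W^u_{loc}(\arr z; \tilde f'_a))_a$ with $d(\tilde Q_a, \tilde S'_a) = o(\|a-a_0\|^d)$. Setting $Q'_a := \pi'_a(\tilde Q_a)$ yields a $C^d$-family in $(W^u_{loc}(\arr z; f'_a))_a$, and since the $C^{d+r,d+r}_A$-family $(\pi'_a)_a$ is uniformly Lipschitz in the base point with bounds independent of $a$, one obtains
\[d(Q'_a, S'_a) = d(\pi'_a(\tilde Q_a), \pi'_a(\tilde S'_a)) \le \lip(\pi'_a)\cdot d(\tilde Q_a, \tilde S'_a) = o(\|a-a_0\|^d),\]
which is precisely $(H_3)$. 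The only delicate point to verify in passing is that the hyperbolic continuations genuinely commute with the projection, i.e.\ that $\pi'_a(\tilde S'_a) = S'_a$ and $\pi'_a(W^u_{loc}(\arr z; \tilde f'_a)) = W^u_{loc}(\arr z; f'_a)$, which follows from the semi-conjugacy together with the dynamical characterization of these invariants.
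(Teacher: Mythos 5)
Your treatment of $(H_1)$, $(H_2)$, $(H_4)$ is in the same spirit as the paper (transfer through the lifted objects and the fact that $\pi'_a$ restricted to $W^u_{loc}(\tilde S'_a;\tilde f'_a)$ is a diffeomorphism), although you are too quick in calling them ``automatic'': since $(f'_a)_a$ is only $C^{d-1,d-1}_A$-close to $(f_a)_a$ (which for $d=1$ is merely $C^0$-closeness), no openness argument can be run downstairs, and the paper has to argue each condition through the lift and the projection (e.g.\ a cone-field argument for $(H_4)$). The genuine gap is in your key step for $(H_3)$. You invoke a lifted parablender property $(\tilde H_3)$ for $(\tilde f'_a)_a$, claiming it holds ``by openness of the parablender property in the $C^{d,d}_A$-topology'' at the lifted level, and then you project the resulting family $(\tilde Q_a)_a$ down. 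But upstairs this property is not robust and in general fails: the local unstable manifolds of $\tilde K'_a$ are curves, so their union over $\arr z\in\arr K$ is a two-dimensional set inside the $n(\ge 3)$-dimensional manifold $\tilde{\mathbb A}$; a generic $C^{d,d}_A$-small perturbation pushes $\tilde S'_a$ off this measure-zero set, and there is no reason a family $(\tilde Q_a)_a$ in these unstable manifolds should match the $C^d$-jet of $(\tilde S'_a)_a$. The covering property of the (para)blender is a codimension phenomenon that only becomes open after quotienting by the strong stable foliation --- that is the entire point of introducing $\pi'_a$ --- so it cannot be first asserted upstairs and then pushed down by a Lipschitz estimate.

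This is why the paper's proof of $(H_3)$ is structured differently. It first shows that $(S'_a)_a$ is $C^d$-close to $(S_a)_a$ despite $(f'_a)_a$ being only $C^{d-1,d-1}_A$-close to $(f_a)_a$: the strong contraction in the fibers gives an invariant class $\mathcal V$ of nearly vertical codimension-two $C^d$-submanifolds, the continuation $\cup_a\{a\}\times W^{ss}_{loc}(\tilde S'_a;\tilde f'_a)$ stays in $\mathcal V$, and its intersection with $\{h=0\}$ is $(S'_a)_a$; your proposal does not address this point at all, yet it is needed to know that $J^d_{a_0}(S'_a)_a$ lies in a covered domain. It then re-derives the covering property for $(K'_a)_a$ directly for the quotient dynamics, using the explicit structure of Example \ref{expparablender}: for a curve $(z_a)_a$ with $J^d_{a_0}(z_a)_a\in\hat O$, the strong stable manifolds $W^{ss}(\tilde z_a;\tilde f'_a)$ are $C^{d,d}_A$-close to vertical, which allows one to choose at each step a symbol $\delta_{-i}\in\Delta$ so that the jets of the successive $f'_a$-preimages $z^{-i}_a$ remain in $\hat O$, producing a preorbit $\underline\delta$ and a $C^d$-family $(Q_a)_a$ in $W^u_{loc}(\underline\delta;f'_a)$ with $J^d_{a_0}(Q_a)_a=J^d_{a_0}(z_a)_a$. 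Your proposal is missing this mechanism, and without it the verification of $(H_3)$ does not go through.
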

Consequently, by the main theorem,  for every $N\ge 0$, there exists a $C^{d+r,d+r}_A$-perturbation $(f_a'')_a$ of $(f'_a)_a$ (which is also a $C^{d,r}_X$-perturbation) so that the map  $f''_a$ displays a sink of period $\ge N$ for every $a\in \I^k$. 

 
Let $\tilde f''_a$ be the map which sends $(z,h)\in \tilde {\mathbb A }$ to 
the point in $\pi'^{-1}_a(\{f''_a(z)\})$ with $(n-2)$-last coordinates equal to those of $\tilde f'(z,h)$. We observe that:
\[\pi'_a \circ \tilde f''_a = f''_a \circ \pi'_a \; .\]
Furthermore, by smoothness of the strong stable foliation, 
$\tilde f''_a$ is a $C^{d+r,r+d}_A$-perturbation $(\tilde f'_a)_a$.  In particular $(\tilde f''_a)_a$ is a $C^{d,r}_X$-perturbation of $(\tilde f'_a)_a$.

Also, since the fibers are contracted,  for every $a\in \I^k$,
 $\tilde f''_a$ has a sinks of period $\ge N$.  This proves the existence of a $C^{d,r}_X$-dense set of family of diffeomorphisms which display  a sink of period $\ge M$ for every parameter $a\in \I^k$. Note that this set is necessarily open. The intersection of these open and dense sets among $N\ge 0$ is the residual set $\mathcal R$: a family therein displays a sink of arbitrarily large period and so infinitely many sinks, at every parameter $a\in \I^k$.   
 \end{proof}
\begin{proof}[Proof of Fact \ref{dernierfact}]

We recall that $(f_a)_a$ satisfies $(H_0\cdots H_4)$ with a family of projectively hyperbolic source $(S_a)_a$, an area contracting fixed point $(P_a)_a$, a $C^d$-parablender $(K_a)_a$ and a family of local unstable manifold $(W^u_{loc} (\arr z; f_a))_{\arr z\in \arr K\; a\in \I^k}$. These sets are cannonically embedded in $\{h=0\}$ to hyperbolic set for the the product dynamics of $(f_a)_a$ with $0$. Hence for $\lambda>0$ and $\rho$ small, they persists for $(\tilde f_a)_a$ to hyperbolic set. Let $(\tilde S_a)_a$, $(\tilde P_a)$, $(\tilde K_a)_a$ and $(W^u(\arr z; \tilde f_a))_{\arr z\; a}$ be the hyperbolic continuations of them for $(\tilde f_a)_a$.  Let $(\tilde S'_a)_a$, $(\tilde P'_a)$, $(\tilde K'_a)_a$  and $(W^u(\arr z; \tilde f'_a))_{\arr z\; a}$ be the hyperbolic continuation of them for the $C^{d+r+1}$-family $(\tilde f'_a)_a$. 
Let $a\in \I^k$.  Note that the local strong stable manifold of $\tilde S'_a$ and $\tilde P'_a$ intersect $\{ h=0\}$ at respectively a projectively hyperbolic source $S_a'$ and an area contracting saddle point $P'_a$  for $f'_a$. 

Let us prove $(H_1)$ and $(H_2)$. 
The dynamics of $\tilde f_a'$ restricted to a local unstable manifold of $W^u_{loc} (\tilde S_a'; \tilde  f_a')$ is $C^d$-close to $\tilde f_a| W^u_{loc} (\tilde S_a; \tilde f_a)$. 
By $(H_1)$, the latter can be chosen so that it projects diffeomorphically onto $B$ by $\pi$.  Hence $W^u_{loc} (\tilde S_a'; \tilde  f_a')$ project diffeomorphically by $\pi_a'$  onto a domain $B'$ which is $C^0$-close to $B$: a basin of $S_a'$.
By hyperbolic continuation, the image $K'_a=\pi_a'(\tilde K'_a)$ is close to $K_a$ (for the Hausdorff topology on compact subset). Hence, it is included $B'$ by $(H_1)$ for $f_a$. This shows the first part of $(H_1)$ for $f'_a$, for every $a\in \I^k$. 

Since $\pi_a'| W^u_{loc} (\tilde S_a'; \tilde  f_a')$ is a diffeomorphism, it suffices to verify the second part of $(H_1)$ and the first part of $(H_2)$ at  $W^u_{loc} (\tilde S_a'; \tilde  f_a')$. 

For this end, observe that $W^u_{loc} (\tilde S_a'; \tilde  f_a')$ is foliated by strong unstable manifolds $\tilde{\mathcal F}^{uu}$, and this foliation is $C^d$-close to the one of $W^u_{loc} (\tilde S_a; \tilde  f_a)$.  
By $(H_2)$ for $f_a$, the latter foliation displays a robust tangency with $W^s_{loc}(P_a; f_a)\times (-1,1)^k$, which is $C^d$-close to a local stable manifold $W^s_{loc}(\tilde P_a; \tilde f'_a)$  of $\tilde P'_a$. Hence the strong unstable foliation of   $W^u_{loc} (\tilde S_a'; \tilde  f_a')$ displays a robust tangency with $W^s_{loc}(\tilde P_a; \tilde f'_a)$. 
Looking at the image by the diffeomorphism $\pi_a'| W^u_{loc} (\tilde S_a'; \tilde  f_a')$, it comes that $W^s(P_a'; f'_a)$ displays a robust tangency with the strong unstable foliation of $S'_a$ : the first part of condition $(H_2)$ for $f_a'$. 

Likewise, for every $z\in K_a$, a small local stable manifold 
$W^s_{loc} (z; f_a)$ for $f_a$ defines a local stable manifold $W^s_{loc} ( z; f_a)\times (-1,1)^{n-2}$ for $\tilde f_a$. 
The latter is transverse to $\tilde{\mathcal F}^{uu}$ by $(H_1)$. 
Hence  by hyperbolic continuation, the final part of $(H_1)$ is also satisfied for $f_a'$.  

Furthermore, by $(H_2)$ for $f_a$, the local stable manifold $W^u(\tilde P_a ;\tilde f_a)$ displays a transverse intersection with $W^s_{loc} ( z; f_a)\times (-1,1)^{n-2}$ for a certain $z\in K_a$. By hyperbolic continuation, 
$W^u(\tilde P_a ;\tilde f_a)$ displays a transverse intersection with $W^s_{loc} ( \tilde z; \tilde f'_a)$ for $\tilde z$ in $\tilde K_a'$. 
This transverse intersection projects by $\pi'_a$ to a transverse intersection between $W^u(P'_a ; f'_a)$ and $W^s_{loc} ( \pi_a'(\tilde z);  f'_a)$. This shows the final part of $(H_2)$ for $f_a'$.  

 
Therefore $f'_a$ satisfies $(H_1)$ and $(H_2)$ for every $a\in \I^k$. 

\medskip
Let us show $(H_4)$.  We recall that a local strong stable manifold of $\tilde S_a$ is a vertical segment. It persists to local strong stable manifold $W^{ss}_{loc} (\tilde S_a'; \tilde f'_a)$ for $\tilde S'_a$ which is close to a vertical segment.

Let $E^c_a$ (resp. $E'^c_a$) be the sum of the strong stable direction and the weak unstable direction of $\tilde S_a'$ (resp. $\tilde S_a'$). The one-codimensional plane  $E'^c_a$ extends to a unique plane field $E'^c_a$ over $W^{ss}(\tilde S'_a; \tilde f'_a)$ which is at most weakly expanded. This is easily shown by using a cone field argument. The latter shows moreover that $E'^c_a$ is $C^0$-close to the constant plane field equal to $E^c_a$.
A vector in $D\pi'_a( E'^c_a)$ is at most weakly expanded by 
$D_{S'_a} f_a'^n$ since $Df_a'^n\circ D\pi_a' = D\pi_a' \circ D\tilde f_a^n$. Hence $D\pi'_a( E'^c_a)$ is equal to the weak unstable direction of $S'_a$. 

On the other hand, let us consider  a local unstable manifold $W^u_{loc} (\arr z; f_a)$ of the family satisfying  $(H_3-H_4)$ for $(f_a)_a$. It persists to a local unstable manifold $W^u_{loc} (\arr z; \tilde f'_a)$ which is $C^1$-close to   $W^u_{loc} (\arr z; f_a)\times \{0\}$. The latter cannot be tangent to $E^c_a$ by $(H_4)$ for $(f_a)_a$. Hence $W^u_{loc} (\arr z; \tilde f'_a)$ cannot be tangent to $E'^c_a$. 
Note that $W^u_{loc} (\arr z; \tilde f'_a)$ is sent by $\pi_a'$ to $W^u_{loc} (\arr z; f'_a)$. Thus $W^u_{loc} (\arr z; f'_a)$ cannot be tangent to the weak unstable direction of $S_a'$. This achieves the proof of $(H_4)$.

\medskip
Let us show $(H_3)$.  Let us call a vertical $\text{codim }2$-$C^d$-submanifold any product of a $C^d$-curve in $\mathbb I^k\times \mathbb A$ with $(-1,1)^{n-2}$.
By strong contraction, we notice that there is an open set $\mathcal V$ of  $\text{codim }2$-$C^d$-submanifold $C^d$-close to be vertical, which is left invariant by $(a,z)\mapsto (a,\tilde f_a(z))$ and $(a,z)\mapsto (a,\tilde f'_a(z))$.

As $\cup_{a} \{a,R_a\} \times  (-1,1)^d=\cup_{a} \{a\}\times W^{ss}_{loc} (\tilde R_a; \tilde f_a)\}$ is in $\mathcal V$, the hyperbolic continuation $\cup_{a} \{a\}\times W^{ss}_{loc} (\tilde R'_a; \tilde f'_a)\}$ is in $\mathcal V$ as well.  Hence its intersection $(S'_a)_a$  with $\{h=0\}$ is $C^d$-close to $(S_a)_a$. 

To accomplish the proof of $(H_3)$, it suffices to show that $(K'_a)_a$ is a $C^d$-parablender with $\hat O$ as covered domain (see Example \ref{expparablender} for the explicit definition of $\hat O$). 

Let  $a_0\in \I^k$ and let $(z_a)_a$ be a $C^d$-curve with 
$J^d_{a_0}(z_a)_a\in \hat O$. 
Let $W^{ss}(\tilde z_a; \tilde f_a')$ be the strong stable manifold of $\tilde z_a= (z_a,0)\in \tilde {\mathbb A}$. We notice that 
$\cup_{a\in \I^k} \{a\}\times W^{ss}(\tilde z_a; \tilde f_a')$ is in $\mathcal V$ and so $(W^{ss}(\tilde z_a; \tilde f_a'))_a$
is  $C^{d,d}_A$-close to the family $(\{z_a\}\times (-1,1)^d)_a$. Hence, by the covering property of Example \ref{expparablender}, there exists $\delta_{-1}\in \Delta$ so that the intersection point $\{(z_a(h),h)\}=W^{ss}_{loc}(\tilde z_a; f_a)\cap \mathbb A\times \{h\}$ satisfies that $J^d_{a_0} (z_a(h))_a$ is close to $\hat O_{\delta_{-1}}$, for every $h\in (-1,1)^{d-2}$.
  Let $W^s_{loc}(\tilde z^{-1}_a; \tilde f'_a)$ be the component of $\tilde f_a^{-1}(W^s_{loc}(\tilde z_a;\tilde  f'_a))\cap \tilde {\mathbb A}
  $ associated to the symbol $\delta_{-1}$. By the covering property, the intersection point $z_a^{-1}$ of $W^s_{loc}(\tilde z^{-1}_a; \tilde f'_a)$ with $\{h=0\}$ displays a $C^d$-jet at $a=a_0$ in $\hat O$. By definition, $z_a^{-1}$ is the preimage by $f'_a$ of $z_a$ associated to $\delta_{-1}$.
  
Again, we notice that $W^{ss}_{loc}(\tilde z^{-1}_a; \tilde f'_a)$   is $C^{d,d}_A$-close to the family $(\{z^{-1}_a\}\times (-1,1)^d)_a$, and so there exists $\delta^{-2}\in \Delta$ so that $W^{ss}_{loc}(\tilde z^{-1}_a; \tilde f'_a)$ is made of points $z_a^{-2}$ s.t. $J^d_{a_0} (z_a)_a$ is close to be in $\hat O_{\delta_{-2}}$. 
  And so on, we define likewise a sequence of preimages $((z^{-i}_a)_{i\le -1})_a$ of $(z_a)_a$ by $(f'_a)_a$ associated to 
a preorbit of symbols $\underline \delta\in \Delta^{\Z^-}$ ,
and such that $J^d_{a_0}(z^{-i}_a)_a$ is in $\hat O$ for every $i$.  Let $W^u_{loc}(\underline \delta ; f'_a)$
  be the local unstable manifolds  associated to the preorbit $\underline \delta$. 
  As for Example \ref{expparablender} this implies the existence of a $C^d$-family of points $(Q_a)_a\in (W^u_{loc} (\underline \delta ; f'_a))_a$ satisfying $J^d_{a_0}(Q_a)_a = J^d_{a_0} (z_a)_a$.   
  This proves that $\hat O$ is a covered domain of the $C^d$-parablender $(K'_a)_a$. 
\end{proof}

\bibliographystyle{alpha}
\bibliography{references}

\def\cprime{$'$} \def\cprime{$'$} \def\cprime{$'$}
\begin{thebibliography}{DNP06}

\bibitem[Ano67]{An67}
D.~V. Anosov.
\newblock Geodesic flows on closed {R}iemannian manifolds of negative
  curvature.
\newblock {\em Trudy Mat. Inst. Steklov.}, 90:209, 1967.

\bibitem[Asa08]{Asa08}
M.~Asaoka.
\newblock Hyperbolic sets exhibiting {$C^1$}-persistent homoclinic tangency for
  higher dimensions.
\newblock {\em Proc. Amer. Math. Soc.}, 136(2):677--686, 2008.

\bibitem[BC91]{BC2}
M.~Benedicks and L.~Carleson.
\newblock The dynamics of the {H}\'enon map.
\newblock {\em Ann. Math.}, 133:73--169, 1991.

\bibitem[BCP16]{BCP16}
P.~Berger, S.~Crovisier, and P.~Pujals.
\newblock Iterated functions systems, blenders and parablenders.
\newblock {\em arXiv:1603.01241, to appear Proc. FARFIII}, 2016.

\bibitem[BD96]{BD96}
C.~Bonatti and L.~J. D{\'{\i}}az.
\newblock Persistent nonhyperbolic transitive diffeomorphisms.
\newblock {\em Ann. of Math. (2)}, 143(2):357--396, 1996.

\bibitem[BD99]{BD99}
C.~Bonatti and L.~D{\'{\i}}az.
\newblock Connexions h\'et\'eroclines et g\'en\'ericit\'e d'une infinit\'e de
  puits et de sources.
\newblock {\em Ann. Sci. \'Ecole Norm. Sup. (4)}, 32(1):135--150, 1999.

\bibitem[BD02]{BD02}
C.~BONATTI and L.~DIAZ.
\newblock On maximal transitive sets of generic diffeomorphisms.
\newblock {\em Inst. Hautes \'Etudes Sci. Publ. Math.}, 96:171--197, 2002).

\bibitem[Bera]{berhen}
P.~Berger.
\newblock Abundance of non-uniformly hyperbolic {H}\'enon like endomorphisms.
\newblock {\em arXiv:0903.1473v2}.

\bibitem[Berb]{MisuRen}
P.~Berger.
\newblock {N}ormal forms and {M}isiurewicz renormalization for dissipative
  surface diffeomorphisms.
\newblock {\em arXiv:1404.2235}.

\bibitem[Ber16a]{BE152}
P.~Berger.
\newblock Corrigendum to generic family with robustly infinitely many sinks.
\newblock 2016.

\bibitem[Ber16b]{BE15}
P.~Berger.
\newblock Generic family with robustly infinitely many sinks.
\newblock {\em Invent. Math.}, 205(1):121--172, 2016.

\bibitem[BY93]{BY}
M.~Benedicks and L.-S. Young.
\newblock Sina\u\i-{B}owen-{R}uelle measures for certain {H}\'enon maps.
\newblock {\em Invent. Math.}, 112(3):541--576, 1993.

\bibitem[Cob65]{Co65}
A.~Cobham.
\newblock The intrinsic computational difficulty of functions.
\newblock In {\em Logic, {M}ethodology and {P}hilos. {S}ci. ({P}roc. 1964
  {I}nternat. {C}ongr.)}, pages 24--30. 1965.

\bibitem[DNP06]{DNP}
L.~J. D{\'{\i}}az, A.~Nogueira, and E.~R. Pujals.
\newblock Heterodimensional tangencies.
\newblock {\em Nonlinearity}, 19(11):2543--2566, 2006.

\bibitem[H{\'e}n76]{He76}
M.~H{\'e}non.
\newblock A two-dimensional mapping with a strange attractor.
\newblock {\em Comm. Math. Phys.}, 50(1):69--77, 1976.

\bibitem[HK90]{HK90}
F.~Hofbauer and G.~Keller.
\newblock Quadratic maps without asymptotic measure.
\newblock {\em Comm. Math. Phys.}, 127(2):319--337, 1990.

\bibitem[HK10]{HK10}
B.~R. Hunt and V.~Y. Kaloshin.
\newblock Prevalence.
\newblock In {\em Handbook of Dynamical Systems}, volume~3, pages 43--87. 2010.

\bibitem[IL99]{IL99}
Y.~Ilyashenko and W.~Li.
\newblock {\em Nonlocal bifurcations}, volume~66 of {\em Mathematical Surveys
  and Monographs}.
\newblock American Mathematical Society, Providence, RI, 1999.

\bibitem[KS15]{Ki15}
S.~{Kiriki} and T.~{Soma}.
\newblock {Takens' last problem and existence of non-trivial wandering
  domains}.
\newblock {\em ArXiv e-prints}, 2015.

\bibitem[Lor63]{Lo63}
E.~N. Lorenz.
\newblock Deterministic nonperiodic flow.
\newblock {\em J. Atmos. Sci.}, 20:130--141, 1963.

\bibitem[MV93]{Mora-viana}
L.~Mora and M.~Viana.
\newblock Abundance of strange attractors.
\newblock {\em Acta. Math.}, 171:1--71, 1993.

\bibitem[New74]{Newhouse}
S.~E. Newhouse.
\newblock Diffeomorphisms with infinitely many sinks.
\newblock {\em Topology}, 12:9--18, 1974.

\bibitem[New80]{New80}
Sheldon~E. Newhouse.
\newblock Lectures on dynamical systems.
\newblock In {\em Dynamical systems ({C}.{I}.{M}.{E}. {S}ummer {S}chool,
  {B}ressanone, 1978)}, volume~8 of {\em Progr. Math.}, pages 1--114.
  Birkh\"auser, Boston, Mass., 1980.

\bibitem[PS89]{PS89}
C.~Pugh and M.~Shub.
\newblock Ergodic attractors.
\newblock {\em Trans. Amer. Math. Soc.}, 312(1):1--54, 1989.

\bibitem[PS96]{PS95}
C.~Pugh and M.~Shub.
\newblock {\em Stable ergodicity and partial hyperbolicity}, volume 362, pages
  182--187.
\newblock In International Conference on Dynamical Systems (Montevideo, 1995),
  longman, harlow edition, 1996.

\bibitem[Sma67]{Sm}
S.~Smale.
\newblock Differentiable dynamical systems.
\newblock {\em Bull. Amer. Math. Soc.}, 73:747--817, 1967.

\bibitem[Tak11]{T11}
H.~Takahasi.
\newblock Abundance of non-uniform hyperbolicity in bifurcations of surface
  endomorphisms.
\newblock {\em Tokyo J. of Math.}, 34(1):53--113, 06 2011.

\bibitem[Tur15]{Tu15}
D.~Turaev.
\newblock Maps close to identity and universal maps in the {N}ewhouse domain.
\newblock {\em Comm. Math. Phys.}, 335(3):1235--1277, 2015.

\bibitem[Yoc95]{Yoccozintro}
J.-C. Yoccoz.
\newblock Introduction to hyperbolic dynamics.
\newblock In {\em Real and complex dynamical systems ({H}iller\o d, 1993)},
  volume 464 of {\em NATO Adv. Sci. Inst. Ser. C Math. Phys. Sci.}, pages
  265--291. 1995.

\bibitem[YW01]{YW}
L.~S. Young and Q.~D. Wang.
\newblock attractors with one direction of instability.
\newblock {\em Commun. Math. Phys.}, 218:1--97, 2001.

\end{thebibliography}

\end{document}